\crefname{hypothesis}{Hypothesis}{Hypotheses}
\title{Entropy Stable \MakeLowercase{$p$}-Nonconforming Discretizations with the Summation-by-Parts Property for the Compressible Euler equations
\thanks{Associated with this paper is a companion NASA technical report~\cite{Fernandez2018_TM} that contians additional details omitted in this paper for brevity.}}
\author{David C.\ Del Rey Fern\'andez\thanks{National Institute of Aerospace and 
        Computational AeroSciences Branch, NASA Langley Research Center, Hampton, VA, United States, 
                (\email{dcdelrey@gmail.com}).}
  \and Mark H.\ Carpenter\thanks{Computational AeroSciences Branch, NASA Langley Research Center, Hampton, VA, United States, 
                (\email{mark.h.carpenter@nasa.gov}).}
  \and Lisandro Dalcin\thanks{King Abdullah University of Science and Technology (KAUST), 
  Computer Electrical and Mathematical Science and Engineering Division (CEMSE), 
  Extreme Computing Research Center (ECRC), Thuwal, Saudi Arabia,  
                (\email{dalcinl@gmail.com}, \email{diego.rojasblanco@kaust.edu.sa}, \email{stefano.zampini@kaust.edu.sa}, \email{matteo.parsani@kaust.edu.sa}).}
    \and Lucas Fredrich\thanks{Mathematical Institute, University of Cologne, North Rhine-Westphalia, Germany,
                (\email{lfriedri@math.uni-koeln.de}, \email{ggassner@math.uni-koeln.de}).}
    \and Diego Rojas\footnotemark[4]
    \and Andrew R.\ Winters\thanks{Department of Mathematics (MAI), Link\"{o}ping University, Sweden,
                (\email{andrew.ross.winters@liu.se}).}
    \and Gregor J.\ Gassner\footnotemark[5]
    \and Stefano Zampini\footnotemark[4]
    \and Matteo Parsani\footnotemark[4]
}
\DeclareMathOperator{\diag}{diag}
\definecolor{darkorange}{rgb}{1.0, 0.55, 0.0}
\definecolor{do}{rgb}{1.0, 0.55, 0.0}
\definecolor{Royalblue}{rgb}{0.254,0.41,0.88}
\definecolor{darkorange}{rgb}{1.0, 0.55, 0.0}
\definecolor{royalblue}{rgb}{0.254,0.41,0.88}
\newcommand{\Tr}{\ensuremath{^{\mr{T}}}}
\newcommand{\mr}[1]{\ensuremath{\mathrm{#1}}}
\newcommand{\fnc}[1]{\ensuremath{\mathcal{#1}}}
\newcommand{\bfnc}[1]{\ensuremath{\bm{\mathcal{#1}}}}
\newcommand{\mat}[1]{\ensuremath{\mathsf{#1}}}
\newcommand{\etal}[0]{{\em et~al.\@}\xspace}
\newcommand{\eg}[0]{{e.g.\@}\xspace}
\newcommand{\ie}[0]{{i.e.\@}\xspace}
\newcommand{\etc}[0]{{etc.\@}\xspace}
\newcommand{\Theorem}[0]{Theorem}
\newcommand{\Eq}[0]{Eq.}
\newcommand{\Th}[0]{\ensuremath{^{\mathrm{th}}}}
\newtheorem{assume}{Assumption}
\newtheorem{thrm}{Theorem}
\newcommand{\pL}[0]{\ensuremath{p_{\mathrm{L}}}}
\newcommand{\pH}[0]{\ensuremath{p_{\mathrm{H}}}}
\newcommand{\qL}[0]{\ensuremath{\bm{q}_{\mathrm{L}}}}
\newcommand{\qH}[0]{\ensuremath{\bm{q}_{\mathrm{H}}}}
\newcommand{\rmL}[0]{\mathrm{L}}
\newcommand{\rmH}[0]{\mathrm{H}}
\newcommand{\rmR}[0]{\mathrm{R}}
\newcommand{\xm}[1]{\ensuremath{x_{#1}}}
\newcommand{\xil}[1]{\ensuremath{\xi_{#1}}}
\newcommand{\alphal}[1]{\ensuremath{\alpha_{#1}}}
\newcommand{\betal}[1]{\ensuremath{\beta_{#1}}}
\newcommand{\Nl}[1]{\ensuremath{N_{#1}}}
\newcommand{\bxil}[1]{\ensuremath{\bm{\xi}_{#1}}}
\newcommand{\bxili}[2]{\ensuremath{\bm{\xi}_{#1}^{(#2)}}}
\newcommand{\bmui}[1]{\ensuremath{\bm{u}^{(#1)}}}
\newcommand{\Q}[0]{\ensuremath{\bm{\fnc{Q}}}}
\newcommand{\Jk}[0]{\ensuremath{\fnc{J}_{\kappa}}}
\newcommand{\Qk}[0]{\ensuremath{\bm{\fnc{Q}_{\kappa}}}}
\newcommand{\Jdxildxm}[2]{\ensuremath{\Jk\frac{\partial\xil{#1}}{\partial\xm{#2}}}}
\newcommand{\Fxm}[1]{\ensuremath{\bm{\fnc{F}}_{\xm{#1}}}}
\newcommand{\fxm}[1]{\ensuremath{\fnc{F}_{\xm{#1}}}}
\newcommand{\Um}[1]{\ensuremath{\fnc{U}_{#1}}}
\newcommand{\nxm}[1]{\ensuremath{n_{\xm{#1}}}}
\newcommand{\nxil}[1]{\ensuremath{n_{\xil{#1}}}}
\newcommand{\GB}[0]{\ensuremath{\bm{\fnc{G}}^{(B)}}}
\newcommand{\Gzero}[0]{\ensuremath{\bm{\fnc{G}}^{(0)}}}
\newcommand{\DxiloneD}[1]{\ensuremath{\mat{D}_{\xil{#1}}^{(1D)}}}
\newcommand{\PxiloneD}[1]{\ensuremath{\mat{P}_{\xil{#1}}^{(1D)}}}
\newcommand{\QxiloneD}[1]{\ensuremath{\mat{Q}_{\xil{#1}}^{(1D)}}}
\newcommand{\SxiloneD}[1]{\ensuremath{\mat{S}_{\xil{#1}}^{(1D)}}}
\newcommand{\ExiloneD}[1]{\ensuremath{\mat{E}_{\xil{#1}}^{(1D)}}}
\newcommand{\txilalpha}[1]{\ensuremath{\bm{t}_{\alphal}}}
\newcommand{\txilbeta}[1]{\ensuremath{\bm{t}_{\betal}}}
\newcommand{\DoneD}[0]{\mat{D}^{(1D)}}
\newcommand{\PoneD}[0]{\mat{P}^{(1D)}}
\newcommand{\Imat}[1]{\ensuremath{\mat{I}_{#1}}}
\newcommand{\M}[0]{\ensuremath{\mat{P}}}
\newcommand{\barM}[0]{\ensuremath{\overline{\mat{P}}}}
\newcommand{\Dxm}[1]{\ensuremath{\mat{D}_{\xm{#1}}}}
\newcommand{\Dxil}[1]{\ensuremath{\mat{D}_{\xil{#1}}}}
\newcommand{\Qxil}[1]{\ensuremath{\mat{Q}_{\xil{#1}}}}
\newcommand{\Sxil}[1]{\ensuremath{\mat{S}_{\xil{#1}}}}
\newcommand{\Exil}[1]{\ensuremath{\mat{E}_{\xil{#1}}}}
\newcommand{\Ealphal}[1]{\ensuremath{\mat{E}_{\alphal{#1}}}}
\newcommand{\Ebetal}[1]{\ensuremath{\mat{E}_{\betal{#1}}}}
\newcommand{\barEalphal}[1]{\ensuremath{\bar{\mat{E}}_{\alphal{#1}}}}
\newcommand{\barEbetal}[1]{\ensuremath{\bar{\mat{E}}_{\betal{#1}}}}
\newcommand{\Ralphal}[1]{\ensuremath{\mat{R}_{\alphal{#1}}}}
\newcommand{\Rbetal}[1]{\ensuremath{\mat{R}_{\betal{#1}}}}
\newcommand{\barRalphal}[1]{\ensuremath{\overline{\mat{R}}_{\alphal{#1}}}}
\newcommand{\barRbetal}[1]{\ensuremath{\overline{\mat{R}}_{\betal{#1}}}}
\newcommand{\Porthol}[1]{\ensuremath{\mat{P}_{\perp\xil{#1}}}}
\newcommand{\barPorthol}[1]{\ensuremath{\overline{\mat{P}}_{\perp\xil{#1}}}}
\newcommand{\barDxil}[1]{\ensuremath{\overline{\mat{D}}_{\xil{#1}}}}
\newcommand{\barQxil}[1]{\ensuremath{\overline{\mat{Q}}_{\xil{#1}}}}
\newcommand{\barSxil}[1]{\ensuremath{\overline{\mat{S}}_{\xil{#1}}}}
\newcommand{\barExil}[1]{\ensuremath{\overline{\mat{E}}_{\xil{#1}}}}
\newcommand{\Ck}[0]{\ensuremath{C_{\kappa}}}
\newcommand{\matFxm}[3]{\ensuremath{\mat{F}_{\xm{#1}}\left(#2,#3\right)}}
\newcommand{\matFxmtilde}[3]{\ensuremath{\tilde{\mat{F}}_{\xm{#1}}\left(#2,#3\right)}}
\newcommand{\matFxmscai}[4]{\ensuremath{\mat{F}_{\xm{#1}}^{[#4]}\left(#2,#3\right)}}
\newcommand{\matFxmi}[4]{\ensuremath{\mat{F}_{\xm{#1}}^{#4}\left(#2,#3\right)}}
\newcommand{\qk}[1]{\ensuremath{\bm{q}_{#1}}}
\newcommand{\qki}[2]{\ensuremath{\bm{q}_{#1}^{#2}}}
\newcommand{\vki}[2]{\ensuremath{\bm{v}_{#1}^{#2}}}
\newcommand{\tildeqi}[1]{\ensuremath{\tilde{\bm{q}}^{#1}}}
\newcommand{\wki}[2]{\ensuremath{\bm{w}_{#1}^{#2}}}
\newcommand{\tildewi}[1]{\ensuremath{\tilde{\bm{w}}^{#1}}}
\newcommand{\vk}[1]{\ensuremath{\bm{v}_{#1}}}
\newcommand{\fxmsc}[3]{\ensuremath{\bm{f}_{\xm{#1}}^{sc}\left(#2,#3\right)}}
\newcommand{\fxmsctilde}[3]{\ensuremath{\tilde{\bm{f}}_{\xm{#1}}^{sc}\left(#2,#3\right)}}
\newcommand{\ones}[1]{\ensuremath{\bm{1}_{#1}}}
\newcommand{\barones}[1]{\ensuremath{\overline{\bm{1}}_{#1}}}
\newcommand{\wk}[1]{\ensuremath{\bm{w}_{#1}}}
\newcommand{\psixmk}[2]{\ensuremath{\bm{\psi}_{\xm{#1}}^{#2}}}
\newcommand{\psixmki}[3]{\ensuremath{\left(\bm{\psi}_{\xm{#1}}^{#2}\right)^{(#3)}}}
\newcommand{\psixmtildei}[2]{\ensuremath{\tilde{\bm{\psi}}_{\xm{#1}}^{(#2)}}}
\newcommand{\psixmtilde}[1]{\ensuremath{\left(\tilde{\bm{\psi}}_{\xm{#1}}\right)}}
\newcommand{\matJk}[1]{\ensuremath{{\color{orange}\mat{J}}_{#1}}}
\newcommand{\barmatJk}[1]{\ensuremath{{\color{orange}\overline{\mat{J}}}_{#1}}}
\newcommand{\matAlmk}[3]{\ensuremath{\left[{\color{blue}\fnc{J}\frac{\partial\xil{#1}}{\partial\xm{#2}}}\right]_{#3}}}
\newcommand{\barmatAlmk}[3]{\ensuremath{\overline{\left[{\color{blue}\fnc{J}\frac{\partial\xil{#1}}{\partial\xm{#2}}}\right]}_{#3}}}
\newcommand{\matAlmkLH}[5]{\ensuremath{\left[{\color{#5}\fnc{J}\frac{\partial\xil{#1}}{\partial\xm{#2}}}\right]_{#3}^{#4}}}
\newcommand{\EHtoLm}[1]{\ensuremath{\mat{E}^{\rmH\rm{to}\rmL}\left(\matAlmkLH{1}{#1}{}{}{red}\right)}}
\newcommand{\ELtoHm}[1]{\ensuremath{\mat{E}^{\rmL\rm{to}\rmH}\left(\matAlmkLH{1}{#1}{}{}{red}\right)}}
\newcommand{\barEHtoLm}[1]{\ensuremath{\overline{\mat{E}}^{\rmH\rm{to}\rmL}\left(\matAlmkLH{1}{#1}{}{}{red}\right)}}
\newcommand{\barELtoHm}[1]{\ensuremath{\overline{\mat{E}}^{\rmL\rm{to}\rmH}\left(\matAlmkLH{1}{#1}{}{}{red}\right)}}
\newcommand{\ILtoH}[0]{\ensuremath{\mat{I}_{\rmL\mr{to}\rmH}}}
\newcommand{\IHtoL}[0]{\ensuremath{\mat{I}_{\rmH\mr{to}\rmL}}}
\newcommand{\ILtoHoneD}[0]{\ensuremath{\mat{I}_{\rmL\mr{to}\rmH}^{(1D)}}}
\newcommand{\IHtoLoneD}[0]{\ensuremath{\mat{I}_{\rmH\mr{to}\rmL}^{(1D)}}}
\newcommand{\RL}[0]{\ensuremath{\mat{R}_{\rmL}}}
\newcommand{\RH}[0]{\ensuremath{\mat{R}_{\rmH}}}
\newcommand{\Efm}[4]{\ensuremath{\mat{E}^{#1\mathrm{to}#2}\left(\left[{\color{red}\fnc{J}\frac{\partial\xil{#3}}{\partial\xm{#4}}}\right]\right)}}
\newcommand{\barEfm}[4]{\ensuremath{\overline{\mat{E}}^{\mathrm{#1to#2}}\left(\overline{\left[{\color{red}\fnc{J}\frac{\partial\xil{#3}}{\partial\xm{#4}}}\right]}\right)}}
\newcommand{\Ok}[0]{\ensuremath{\Omega_{\kappa}}}
\newcommand{\pOk}[0]{\ensuremath{\partial\Omega_{\kappa}}}
\newcommand{\Ohat}[0]{\ensuremath{\hat{\Omega}}}
\newcommand{\Ohatk}[0]{\ensuremath{\hat{\Omega}_{\kappa}}}
\newcommand{\Ghatk}[0]{\ensuremath{\hat{\Gamma}_{\kappa}}}
\newcommand{\Ghat}[0]{\ensuremath{\hat{\Gamma}}}
\newcommand{\pOhatk}[0]{\ensuremath{\partial\hat{\Omega}_{\kappa}}}
\newcommand{\Y}[0]{\ensuremath{\mat{Y}}}
\newcommand{\dissL}[0]{\ensuremath{\bm{diss}_{\mathrm{L}}}}
\newcommand{\dissH}[0]{\ensuremath{\bm{diss}_{\mathrm{H}}}}
\newcommand{\dfdwL}[0]{\ensuremath{\left|\frac{\partial\bfnc{F}_{\xil{1}}}{\partial\bfnc{W}}\right|_{\mathrm{L}}}}
\newcommand{\dfdwH}[0]{\ensuremath{\left|\frac{\partial\bfnc{F}_{\xil{1}}}{\partial\bfnc{W}}\right|_{\mathrm{H}}}}
\newcommand{\uk}[0]{\ensuremath{\bm{u}_{\kappa}}}
\newcommand{\ur}[0]{\ensuremath{\bm{u}_{r}}}
\newcommand{\uki}[1]{\ensuremath{\bm{u}_{\kappa}^{#1}}}
\newcommand{\uri}[1]{\ensuremath{\bm{u}_{r}^{#1}}}
\newcommand{\uL}[0]{\ensuremath{\bm{u}_{\rmL}}}
\newcommand{\uH}[0]{\ensuremath{\bm{u}_{\rmH}}}
\newcommand{\qbark}[0]{\ensuremath{\bar{\bm{q}}_{\kappa}}}
\newcommand{\vbark}[0]{\ensuremath{\bar{\bm{v}}_{\kappa}}}
\newcommand{\gfk}[2]{\ensuremath{\bm{g}_{#1}^{#2}}}
\newcommand{\gL}[0]{\ensuremath{\bm{g}_{\mathrm{L}}}}
\newcommand{\gH}[0]{\ensuremath{\bm{g}_{\mathrm{H}}}}
\newcommand{\Qc}[0]{\ensuremath{\bfnc{Q}_{c}}}
\newcommand{\qh}[0]{\ensuremath{\bm{q}_{h}}}
\newcommand{\vh}[0]{\ensuremath{\bm{v}_{h}}}
\newcommand{\Ballk}[0]{\ensuremath{B_{\kappa}}}
\newcommand{\Jtilde}[0]{\ensuremath{{\color{orange}\mat{J}}}}
\newcommand{\utilde}[0]{\ensuremath{\widetilde{\bm{u}}}}
\newcommand{\qtilde}[0]{\ensuremath{\widetilde{\bm{q}}}}
\newcommand{\qtildei}[1]{\ensuremath{\widetilde{\bm{q}}^{#1}}}
\newcommand{\wtilde}[0]{\ensuremath{\widetilde{\bm{w}}}}
\newcommand{\stilde}[0]{\ensuremath{\widetilde{\bm{s}}}}
\newcommand{\wtildei}[1]{\ensuremath{\widetilde{\bm{w}}^{(#1)}}}
\newcommand{\psitildem}[1]{\ensuremath{\widetilde{\bm{\psi}_{#1}}}}
\newcommand{\Mtilde}[0]{\ensuremath{\widetilde{\mat{P}}}}
\newcommand{\DtildeH}[1]{\ensuremath{\widetilde{\mat{D}}_{\xil{#1}}}}
\newcommand{\Dtildel}[1]{\ensuremath{\widetilde{\mat{D}}_{\xil{#1}}}}
\newcommand{\Qtildel}[1]{\ensuremath{\widetilde{\mat{Q}}_{\xil{#1}}}}
\newcommand{\Stildel}[1]{\ensuremath{\widetilde{\mat{S}}_{\xil{#1}}}}
\newcommand{\Etildel}[1]{\ensuremath{\widetilde{\mat{E}}_{\xil{#1}}}}
\newcommand{\tildebarmatAlm}[2]{\ensuremath{\left[{\color{blue}\fnc{J}\frac{\partial\xil{#1}}{\partial \xm{#2}}}\right]}}
\newcommand{\tildeone}[0]{\ensuremath{\tilde{\bm{1}}}}
\newcommand{\eonel}[1]{\ensuremath{\bm{e}_{1_{#1}}}}
\newcommand{\eNl}[1]{\ensuremath{\bm{e}_{\Nl{#1}}}}
\newcommand{\bmxi}[1]{\ensuremath{\bm{\xi}^{(#1)}}}
\newcommand{\U}[0]{\ensuremath{\fnc{U}}}
\newcommand{\barmatAlmkLH}[5]{\ensuremath{\overline{\left[{\color{#5}\fnc{J}\frac{\partial\xil{#1}}{\partial\xm{#2}}}\right]}_{#3}^{#4}}}
\newcommand{\QoneDL}[0]{\ensuremath{\mat{Q}^{(1D)}_{\rmL}}}
\newcommand{\EoneDL}[0]{\ensuremath{\mat{E}^{(1D)}_{\rmL}}}
\newcommand{\MM}[0]{\ensuremath{\mat{M}}}
\newcommand{\OhatL}[0]{\ensuremath{\hat{\Omega}_{\rmL}}}
\newcommand{\GammaL}[0]{\ensuremath{\hat{\Gamma}_{\rmL}}}
\newcommand{\QL}[0]{\ensuremath{\bfnc{Q}_{\rmL}}}
\newcommand{\QR}[0]{\ensuremath{\bfnc{Q}_{\rmR}}}
\newcommand{\matAk}[1]{\ensuremath{\left[\fnc{A}\right]_{#1}}}
\newcommand{\barmatAk}[1]{\ensuremath{\overline{\left[\fnc{A}\right]}_{#1}}}
\newcommand{\xR}[0]{\ensuremath{x_{\rm{R}}}}
\newcommand{\xL}[0]{\ensuremath{x_{\rm{L}}}}
\newcommand{\pP}[0]{\ensuremath{p_{\M}}}
\newcommand{\Dhatm}[1]{\hat{\mat{D}}_{#1}}
\newcommand{\Qhatm}[1]{\hat{\mat{Q}}_{#1}}
\newcommand{\Ehatm}[1]{\hat{\mat{E}}_{#1}}
\newcommand{\barDhatm}[1]{\overline{\hat{\mat{D}}}_{#1}}
\newcommand{\barQhatm}[1]{\overline{\hat{\mat{Q}}}_{#1}}
\newcommand{\barEhatm}[1]{\overline{\hat{\mat{E}}}_{#1}}
\newcommand{\Vol}{\ensuremath{\rm{Vol}}}
\newcommand{\Dtildelm}[2]{\ensuremath{\widetilde{\mat{D}}_{#1,#2}}}
\begin{document}
\setlength{\belowdisplayskip}{3pt} \setlength{\belowdisplayshortskip}{3pt}
\setlength{\abovedisplayskip}{3pt} \setlength{\abovedisplayshortskip}{3pt}

\maketitle

\begin{abstract}
The entropy conservative/stable algorithm of Friedrich~\etal (2018) for hyperbolic conservation laws 
on nonconforming $p$-refined/coarsened Cartesian grids, is extended to curvilinear grids for the 
compressible Euler equations. The primary focus is on constructing appropriate coupling procedures across 
the curvilinear nonconforming interfaces. A simple and flexible approach is proposed that uses 
interpolation operators from one element to the other. On the element faces, the analytic metrics are used to 
construct coupling terms, while metric terms in the volume are approximated to satisfy a discretization 
of the geometric conservation laws. The resulting scheme is entropy conservative/stable, elementwise 
conservative, and freestream preserving. The accuracy and stability properties of the resulting 
numerical algorithm are shown to be comparable to those of the original conforming scheme 
($\sim p+1$ convergence) in the context of the isentropic Euler vortex and 
the inviscid Taylor--Green vortex problems on manufactured high order grids.
\end{abstract}

\begin{keywords}
nonconforming interfaces,
nonlinear entropy stability,
summation-by-parts operators,
simultaneous-approximation-terms,
high-order accurate discretizations, 
curved elements, 
unstructured grid
\end{keywords}

\begin{AMS}
\end{AMS}
\section{Introduction}

High-order accurate finite element methods (FEM) provide an efficient approach to achieve high solution 
accuracy. Their computational kernels are arithmetically dense making them compatible with the current
and future highly concurrent hardware. In addition, they are amenable to $h$-, $p$-, and $r$-refinement algorithms, thus facilitating 
the redistribution of the degrees of freedom to better resolve multiscale physics.

High-order accurate methods are known to be more efficient than low-order methods for linear wave 
propagation~\cite{Kreiss1972,Swartz1974}.  Although high order discretizations have a long history of 
development, their application to nonlinear partial differential equations (PDEs) 
for practical applications has been limited by robustness issues. Thus, nominally second-order
accurate discretization operators are typically used in commercial and industrial software.

The summation-by-parts (SBP) framework provides a systematic and discretization-agnostic methodology for 
constructing arbitrarily high-order accurate and provably stable numerical methods for linear and variable coefficient 
linear problems (see, for instance, the survey papers~\cite{Fernandez2014,Svard2014}). SBP operators can be 
viewed as matrix difference operators that are mimetic of integration by parts in that they have a telescoping property. Discrete 
stability over the whole domain is achieved by combining the SBP mechanics with suitable inter-element 
coupling procedures and boundary conditions 
(e.g., the simultaneous approximation terms (SATs) \cite{Carpenter1994,Nordstrom1999}).

For nonlinear problems, progress towards provably stable algorithms applicable to practical problems 
has been much slower. However, certain class of nonlinear conservation laws come endowed 
with a complementary inequality statement (equality for smooth solutions) on the mathematical entropy 
(see, for instance, \cite{dafermos-book-2010}). Therefore, it is desirable for the numerical method to 
satisfy a corresponding discrete analogue. This can then be used to prove 
nonlinear stability of the numerical scheme \cite{Tadmor1987entropy,Tadmor2003}.
Along these lines, a productive trajectory was initiated by Tadmor~\cite{Tadmor1987entropy} who 
constructed entropy conservative low-order finite volume schemes that achieve entropy 
conservation by using two-point flux functions that when contracted with the entropy variables 
result in a telescoping entropy flux. Entropy stability was ensured by adding appropriate dissipation. 
Through the telescoping property, the continuous $L^{2}$ entropy stability analysis is 
mimicked by the semi-discrete stability analysis (for a review of these ideas see 
Tadmor~\cite{Tadmor2003}). 
Tadmor's basic idea has led to the construction of a number of high order and
low order entropy stable schemes (see, for instance, \cite{Fjordholm2012,Ray2016}). An alternative approach, developed by Olsson and Oliger~\cite{Olsson1994}, 
Gerritsen and Olsson~\cite{Margot1996} 
and Yee~\etal~\cite{Yee2000} (see also~\cite{Sandham2002,Bjorn2018}), relies on choosing entropy functions 
that result in a homogeneity property on the Euler fluxes. By using this property, splittings of the Euler fluxes are 
constructed such that when contracted with the entropy variables result in stability estimates analogous in form 
to energy estimates obtained for linear PDEs. Thus, discretizing the resulting split form using SBP 
operators, the nonlinear stability analysis performed at the continuous level is mimicked at the 
semi-discrete level.

A complementary extension of Tadmor's ideas to finite domains was initiated by Fisher and coworkers who 
combined the SBP framework, using classical finite difference SBP operators, with Tadmor's two-point flux~\cite{Fisher2012phd,Fisher2013,FisherCarpenter2013JCPb}. 
The resulting schemes follow the continuous entropy analysis and can be shown to be entropy 
conservative and be made entropy stable by adding appropriate interface dissipation. This nonlinearly stable approach inherits 
all the mechanics of linear SBP schemes for the imposition of boundary conditions and inter-element 
coupling and therefore gives a systematic methodology for discretizing problems on complex 
geometries~\cite{Carpenter2014,Parsani2015b,Parsani2015}. Moreover, by constructing schemes that are 
discretely mimetic of the continuous stability analysis, the need to assume exact integration in the 
stability proofs is eliminated (see for example the work of Hughes~\etal~\cite{Hughes1986}). These ideas 
have been extended to include collocated spectral elements~\cite{Carpenter2014}, fully- and semi-staggered 
spectral elements~\cite{Parsani2016,Carpenter2016}, Cartesian, semi-staggered, nonconforming 
$p$-refinement~\cite{Carpenter2016}, WENO spectral collocation~\cite{Yamaleev2017}, multidimensional SBP 
operators~\cite{Crean2018,Chen2017}, multidimensional staggered SBP operators~\cite{Fernandez2019_staggered}, modal 
decoupled SBP operators~\cite{Chan2018}, and fully discrete explicit entropy stable schemes~\cite{Friedrich2019,ranocha2019relaxation}, 
as well as to a number of PDEs besides the compressible Euler and Navier--Stokes equations (for example the 
magnetohydrodynamics~\cite{Winters2017} and the shallow 
water~\cite{Winters2015} equations).

A necessary constraint for entropy stability on curvilinear meshes is satisfaction of 
discrete geometric conservation law (GCL) 
conditions~\cite{Fisher2012phd,Fisher2013,Carpenter2014,Parsani2016}. Well documented procedures exist for 
generating discrete transformation metrics on conforming meshes that satisfy the GCL 
conditions~\cite{Thomas1979,Vinokur2002a}.  These procedures extend immediately to the nonconforming case 
provided that the polynomial order of the method is sufficient to analytically resolve the element surface 
transformation metrics; a condition that is naturally satisfied if the polynomial orders of the geometry, 
$p_g$ and discretization, $p$, are related by the inequality $p_g \le \frac{p+1}{2}$.  
While it is possible to generate body fitted meshes with full geometric surface order ($p_g = p$) and 
reduced off body order $p_g \le \frac{p+1}{2}$, this forces undue complexity on the already 
complex task of grid generation. Enforcing the $p_g = p$ grid constraint on near-body elements, while
avoiding grid line cross-over or negative Jacobians, increases in complexity with mesh aspect ratio, and
is virtually impossible to achieve in the high Reynolds number limit of realistic aerodynamic configurations.

Herein, the primary objective is to construct entropy stable discretizations suitable for the mechanics of 
high order FEM $p$-adaptivity, applicable for general meshes containing hexahedral polynomial elements of 
full geometric order (i.e., $p_g \le p$).  Initial nonconforming efforts focused on Cartesian,
semi-staggered spectral collocation operators~\cite{Carpenter2016}, but identifying a curvilinear extension 
has proven difficult.  Thus, this work extends to curvilinear coordinates the Cartesian grid work previously reported 
by Friedrich~\etal~\cite{Friedrich2018}, and primarily focuses on constructing appropriate coupling 
procedures across curvilinear nonconforming interfaces.

Many novel contributions are included in this work.  They are summarized as follows:
\begin{itemize}
\item A general, yet simple entropy stable nonconforming algorithm is proposed in curvilinear coordinates 
for the compressible Euler equations that
\begin{itemize}
\item Encapsulates and generalizes several approaches for coupling curvilinear nonconforming interfaces
\item Formalizes necessary conditions for entropy conservation at curvilinear nonconforming interfaces
\item Elegantly handles various mesh generation strategies including elements of full geometric order ($p_g \le p$)
\item Extends the metric approximation approach of Crean~\etal~\cite{Crean2018} to curvilinear nonconforming interfaces 
\item Satisfies the discrete GCL conditions, therefore ensuring freestream preservation
\item Exploits the generality of distinct surface and volume metrics that couple through the GCL constraint
\end{itemize}
\item Numerical evidence is provided that supports the assertion that the conforming~\cite{Carpenter2014,Parsani2016}
and nonconforming algorithms exhibit similar 
1) nonlinear robustness properties and 2) $L^{2}$-norm convergence rates, (i.e., nominally $p+1$ for polynomials of degree $p$).
\end{itemize}

The paper is organized as follows: Section~\ref{sec:notation} delineates the notation used herein. The nonconforming algorithm is presented in the simple context of the 
convection equation in Section~\ref{sec:discretizationconvection}. This is followed by an introduction 
to the construction of nonlinearly stable schemes by examining the discretization of the Burgers' equation (Section~\ref{sec:Burgers}). 
The nonconforming algorithm presented in Section~\ref{sec:discretizationconvection} and the mechanics presented in 
Section~\ref{sec:Burgers} are combined in Section~\ref{sec:Euler} to construct an entropy conservative/stable nonconforming 
discretization for the compressible Euler equations. Numerical experiments are are given in Section~\eqref{sec:num}, while 
conclusions are drawn in Section~\ref{sec:Conclusion}.
\section{Notation}\label{sec:notation}
PDEs are discretized on cubes having Cartesian computational coordinates denoted by 
the triple $(\xil{1},\xil{2},\xil{3})$, where the physical coordinates are denoted by the triple 
$(\xm{1},\xm{2},\xm{3})$. Vectors are represented by lowercase bold font, for example $\bm{u}$, 
while matrices are represented using sans-serif font, for example, $\mat{B}$. Continuous 
functions on a space-time domain are denoted by capital letters in script font.  For example, 
\begin{equation*}
\fnc{U}\left(\xil{1},\xil{2},\xil{3},t\right)\in L^{2}\left(\left[\alphal{1},\betal{1}\right]\times
\left[\alphal{2},\betal{2}\right]\times\left[\alphal{3},\betal{3}\right]\times\left[0,T\right]\right)
\end{equation*}
represents a square integrable function, where $t$ is the temporal coordinate. The restriction of such 
functions onto a set of mesh nodes is denoted by lower case bold font. For example, the restriction of 
$\fnc{U}$ onto a grid of $\Nl{1}\times\Nl{2}\times\Nl{3}$ nodes is given by the vector
\begin{equation*}
\bm{u} = \left[\fnc{U}\left(\bxili{}{1},t\right),\dots,\fnc{U}\left(\bxili{}{N},t\right)\right]\Tr,
\end{equation*}
where, $N$ is the total number of nodes ($N\equiv\Nl{1}\Nl{2}\Nl{3}$) square brackets ($[]$) are used 
to delineate vectors and matrices as well as ranges for variables (the context will make clear which meaning is being used). Moreover, $\bm{\xi}$ is a vector of vectors 
constructed from the three vectors $\bxil{1}$, $\bxil{2}$, and $\bxil{3}$, which are 
vectors of size $\Nl{1}$, $\Nl{2}$, and $\Nl{3}$ and contain the coordinates of the mesh in 
the three computational directions, respectively. Finally, $\bxil{}$ is constructed as 
\begin{equation*}
\bxil{}(3(i-1)+1:3i)\equiv  \bxili{}{i}
\equiv\left[\bxil{1}(i),\bxil{2}(i),\bxil{3}(i)\right]\Tr,
\end{equation*}
where the notation $\bm{u}(i)$ means the $i\Th$ entry of the vector $\bm{u}$ and $\bm{u}(i:j)$ is the subvector 
constructed from $\bm{u}$ using the $i\Th$ through $j\Th$ entries (\ie, Matlab notation is used).

 Oftentimes, monomials are discussed and the following notation is used:
\begin{equation*}
\bxil{l}^{j} \equiv \left[\left(\bxil{l}(1)\right)^{j},\dots,\left(\bxil{l}(\Nl{l})\right)^{j}\right]\Tr,
\end{equation*}
and the 
convention that $\bxil{l}^{j}=\bm{0}$ for $j<0$ is used.

Herein, one-dimensional SBP operators are used to discretize derivatives. 
The definition of a one-dimensional SBP operator in the $\xil{l}$ direction, $l=1,2,3$, 
is~\cite{DCDRF2014,Fernandez2014,Svard2014}
\begin{definition}\label{SBP}
\textbf{Summation-by-parts operator for the first derivative}: A matrix operator with constant coefficients, 
$\DxiloneD{l}\in\mathbb{R}^{\Nl{l}\times\Nl{l}}$, is a linear SBP operator of degree $p$ approximating the derivative 
$\frac{\partial}{\partial \xil{l}}$ on the domain $\xil{l}\in\left[\alphal{l},\betal{l}\right]$ with nodal 
distribution $\bxil{l}$ having $\Nl{l}$ nodes, if 
\begin{enumerate}
\item $\DxiloneD{l}\bxil{l}^{j}=j\bxil{l}^{j-1}$, $j=0,1,\dots,p$;
\item $\DxiloneD{l}\equiv\left(\PxiloneD{l}\right)^{-1}\QxiloneD{l}$, where the norm matrix, 
$\PxiloneD{l}$, is symmetric positive definite;
\item $\QxiloneD{l}\equiv\left(\SxiloneD{l}+\frac{1}{2}\ExiloneD{l}\right)$, 
$\SxiloneD{l}=-\left(\SxiloneD{l}\right)\Tr$, $\ExiloneD{l}=\left(\ExiloneD{l}\right)\Tr$,  
$\ExiloneD{l} = \diag\left(-1,0,\dots,0,1\right)=\eNl{l}\eNl{l}\Tr-\eonel{l}\eonel{l}\Tr$, 
$\eonel{l}\equiv\left[1,0,\dots,0\right]\Tr$, and $\eNl{l}\equiv\left[0,0,\dots,1\right]\Tr$. 
\end{enumerate}
Thus, a degree $p$ SBP operator is 
one that differentiates exactly monomials up to degree $p$.
\end{definition}

In this work, one-dimensional SBP operators are extended to multiple dimensions 
using tensor products ($\otimes$).  The tensor product between the matrices $\mat{A}$ and $\mat{B}$ 
is given as $\mat{A}\otimes\mat{B}$. When referencing individual entries in a matrix the notation $\mat{A}(i,j)$ 
is used, which means 
the $i\Th$ $j\Th$ entry in the matrix $\mat{A}$.
The Hadamard product ($\circ$) is also used in the construction of entropy conservative/stable discretizations. For example, 
the Hadamard product between the matrices $\mat{A}$ and $\mat{B}$ is given by $\mat{A}\circ\mat{B}$. 

The focus in this paper is exclusively on diagonal-norm SBP operators. Moreover, the same 
one-dimensional SBP operator are used in each direction, each operating on $N$ nodes. 
Specifically, diagonal-norm SBP operators constructed on the Legendre--Gauss--Lobatto (LGL) 
nodes are used, \ie, a discontinuous Galerkin collocated spectral element approach is utilized.

The physical domain $\Omega\subset\mathbb{R}^{3}$, 
with boundary $\Gamma\equiv\partial\Omega$ is partitioned into $K$ non-overlapping 
hexahedral elements. The domain of the $\kappa^{\text{th}}$ element is denoted by $\Ok$ and has 
boundary $\pOk$. Numerically, PDEs are solved in computational 
coordinates, 
where each $\Ok$ is locally transformed to $\Ohatk$, with boundary $\Ghat\equiv\pOhatk$, under the 
following assumption:

\begin{assume}\label{assume:curv}
Each element in physical space is transformed using 
a local and invertible curvilinear coordinate transformation that is compatible at 
shared interfaces, meaning that points in computational space on either side of a 
shared interface  mapped to the same physical location and therefore map back 
to the analogous location in computational space; this is the standard assumption 
that the curvilinear coordinate transformation is water tight.
\end{assume}
\section{A $p$-nonconforming algorithm: Linear convection}\label{sec:discretizationconvection}

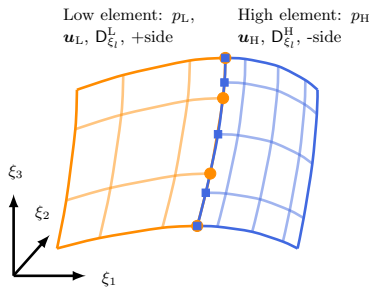
\begin{figure}[!t]
\centering
\begin{tikzpicture}[scale =0.7]
\begin{axis}
[
axis line style={draw=none},
tick style={draw=none},
ticks=none,
ymin=0.25,ymax=0.8,xmin=0.25,xmax=1.0
]
\addplot[mark=none,line width = 0.5mm, color = darkorange, smooth] table {X11.txt};
\addplot[mark=none,line width = 0.5mm, color = darkorange, draw opacity = 0.5, smooth] table {X12.txt};
\addplot[mark=none,line width = 0.5mm, color = darkorange, draw opacity = 0.5, smooth] table {X13.txt};
\addplot[mark=none,line width = 0.5mm, color = darkorange, smooth] table {X14.txt};

\addplot[mark=none,line width = 0.5mm, color = darkorange, smooth] table {Y11.txt};
\addplot[mark=none,line width = 0.5mm, color = darkorange, draw opacity = 0.5, smooth] table {Y12.txt};
\addplot[mark=none,line width = 0.5mm, color = darkorange, draw opacity = 0.5, smooth] table {Y13.txt};
\addplot[mark=none,line width = 0.5mm, color = darkorange, smooth] table {Y14.txt};

\addplot[mark=*,color = darkorange,smooth, mark options={solid}, only marks,mark size=3pt] table {Y14.txt};

\addplot[mark=none,line width = 0.5mm, color = royalblue, smooth] table {X21.txt};
\addplot[mark=none,line width = 0.5mm, color = royalblue, draw opacity = 0.5, smooth] table {X22.txt};
\addplot[mark=none,line width = 0.5mm, color = royalblue, draw opacity = 0.5, smooth] table {X23.txt};
\addplot[mark=none,line width = 0.5mm, color = royalblue, draw opacity = 0.5, smooth] table {X24.txt};
\addplot[mark=none,line width = 0.5mm, color = royalblue, smooth] table {X25.txt};

\addplot[mark=none,line width = 0.5mm, color = royalblue, smooth] table {Y21.txt};
\addplot[mark=none,line width = 0.5mm, color = royalblue, draw opacity = 0.5, smooth] table {Y22.txt};
\addplot[mark=none,line width = 0.5mm, color = royalblue, draw opacity = 0.5, smooth] table {Y23.txt};
\addplot[mark=none,line width = 0.5mm, color = royalblue, draw opacity = 0.5, smooth] table {Y24.txt};
\addplot[mark=none,line width = 0.5mm, color = royalblue, smooth] table {Y25.txt};

\addplot[mark=square*,color = royalblue, mark options={solid}, smooth, only marks] table {Y21.txt};

\node (elemL) at (250,505) [text width=2.5cm]{\small Low element: $\pL$, $\uL$, $\Dxil{l}^{\rmL}$, +side};
\node (elemH) at (640,505) [text width=3cm]{\small High element: $\pH$, $\uH$, $\Dxil{l}^{\rmH}$, -side};
          \draw[-latex, line width = 0.5mm,] (10,50,0) -- (10,200,0);
          \draw[-latex, line width = 0.5mm,] (10,50,0) -- (160,50,0);
          \draw[-latex, line width = 0.5mm,] (10,50.5,0) -- (85,125,-2);
          \draw (180,45,0) node[anchor=west] {\small$\xil{1}$};
          \draw (15,215,0) node[anchor=south] {\small$\xil{3}$};
          \draw (100,140,-2) node[anchor=south east] {\small$\xil{2}$};
%
%
%
%
\end{axis}
\end{tikzpicture}
\caption{Nonconforming elements.}
\label{fig:non}
\end{figure}
The focus herein is on nonconformity arising from curvilinearly mapped elements 
that have conforming interfaces but nonconforming nodal distributions, as would result 
from $p$-refinement interfaces (see Fig.~\ref{fig:non}). The construction of entropy conservative/stable 
discretizations for the compressible Euler equations on Cartesian grids is detailed in Friedrich \etal~\cite{Friedrich2018}. 
Extending the Cartesian work involves developing a $p$-refinement, curvilinear, interface coupling technique
that maintains 1) accuracy, 2) discrete entropy conservation/stability, and 3) elementwise conservation.

\subsection{Scalar convection: continuous and semi-discrete analysis}\label{sec:ICASE_Eqn}
Many of the salient difficulties encountered while constructing stable and conservative nonconforming discretizations 
of the Euler equations, are also present in discretization of linear convection.  
Consider the constant coefficient linear convection in Cartesian coordinates.  
\begin{equation}\label{eq:cartconvection}
  \frac{\partial\U}{\partial t}+\sum\limits_{m=1}^{3}\frac{\partial\left(a_{m}\U\right)}{\partial\xm{m}}=0,
\end{equation}
where $(a_{m}\U)$ are the fluxes and $a_{m}$ are the (constant) components of the convection speed. The stability of~\eqref{eq:cartconvection} can be determined 
using the energy method which proceeds by multiplying~\eqref{eq:cartconvection} by the solution ($\U$), 
and after using the chain rule results in 
\begin{equation}\label{eq:cartconvectionenergy1}
  \frac{1}{2}\frac{\partial\U^{2}}{\partial t}+
  \frac{1}{2}\sum\limits_{m=1}^{3}\frac{\partial\left(a_{m}\U^{2}\right)}{\partial\xm{m}}=0.
\end{equation}
Integrating over the domain, $\Omega$, using integration by parts, and Leibniz rule gives
\begin{equation}\label{eq:cartconvectionenergy2}
  \frac{\mr{d}}{\mr{d}t}\int_{\Omega}\frac{\U^{2}}{2}\mr{d}\Omega+
  \frac{1}{2}\sum\limits_{m=1}^{3}\oint_{\Gamma}\left(a_{m}\U^{2}\right)\nxm{m}\mr{d}\Gamma=0,
\end{equation}
where $\nxm{m}$ is the $m\Th$ component of the outward facing unit normal. What \Eq~\eqref{eq:cartconvectionenergy2} 
demonstrates is that the time rate of change of the norm of the solution, 
$\|\U\|^{2}\equiv\int_{\Omega}\U^{2}\mr{d}\Omega$, depends solely on surface flux integrals. 
Imposing appropriate boundary conditions in \Eq~\eqref{eq:cartconvectionenergy2} leads to an energy estimate 
on the solution and therefore a proof of stability. The discretizations that are developed in this paper 
mimic the above energy analysis in a one-to-one fashion and similarly lead to stability statements on the 
semi-discrete equations.  

The proposed algorithm is constructed from differentiation matrices that 
live in computational space and for this purpose, \Eq~\eqref{eq:cartconvection} is transformed using the 
curvilinear coordinate transformation $\xm{m}=\xm{m}\left(\xil{1},\xil{2},\xil{3}\right)$. Thus, 
after expanding the derivatives as 
\begin{equation*}
  \frac{\partial}{\partial\xm{m}}=
  \sum\limits_{l=1}^{3}\frac{\partial\xil{l}}{\partial\xm{m}}\frac{\partial}{\partial\xil{l}},
\end{equation*} 
and multiplying by the metric Jacobian ($\Jk$), \eqref{eq:cartconvection} becomes 
 \begin{equation}\label{eq:convectionchain}
  \Jk\frac{\partial\U}{\partial t}+\sum\limits_{l,m=1}^{3}\Jk\frac{\partial\xil{l}}{\partial\xm{m}}
  \frac{\partial \left(a_{m}\U\right)}{\partial\xil{l}}=0.
\end{equation}

Herein, we refer to Eq.~\eqref{eq:convectionchain} as the chain rule form of Eq.~\eqref{eq:cartconvection} .
Bringing the metric terms, $\Jdxildxm{l}{m}$, inside the derivative and using the product rule gives
 \begin{equation}\label{eq:convectionstrong1}
  \Jk\frac{\partial\U}{\partial t}+\sum\limits_{l,m=1}^{3}
  \frac{\partial}{\partial\xil{l}}\left(\Jdxildxm{l}{m}a_{m}\U\right)
-\sum\limits_{l,m=1}^{3}a_{m}\U\frac{\partial}{\partial\xil{l}}\left(\Jdxildxm{l}{m}\right)
 =0.
\end{equation}
The last term on the left-hand side of~\eqref{eq:convectionstrong1} is zero via the GCL relations
\begin{equation}\label{eq:GCL}
    \sum\limits_{l=1}^{3}\frac{\partial}{\partial\xil{l}}\left(\Jdxildxm{l}{m}\right)=0,\quad m=1,2,3,
\end{equation}
leading to the strong conservation form of the convection equation in curvilinear coordinates:
 \begin{equation}\label{eq:convectionstrong}
  \Jk\frac{\partial\U}{\partial t}+\sum\limits_{l,m=1}^{3}
  \frac{\partial}{\partial\xil{l}}\left(\Jdxildxm{l}{m}a_{m}\U\right)=0.
\end{equation}

Now, consider discretizing~\eqref{eq:convectionstrong} by using 
the following differentiation matrices:
\begin{equation*}
\Dxil{1}\equiv\mat{D}^{(1D)}\otimes\Imat{N}\otimes\Imat{N},\quad
\Dxil{2}\equiv\Imat{N}\otimes\mat{D}^{(1D)}\otimes\Imat{N},\quad
\Dxil{3}\equiv\Imat{N}\otimes\Imat{N}\otimes\mat{D}^{(N)},
\end{equation*} 
where $\Imat{N}$ is an $N\times N$ identity matrix. The diagonal 
matrices containing the metric Jacobian and metric terms along their diagonals, respectively, are defined as follows:
\begin{equation*}
  \begin{split}
  &\matJk{\kappa}\equiv\diag\left(\Jk(\bmxi{1}),\dots,\Jk(\bmxi{\Nl{\kappa}})\right),\\
  &\matAlmk{l}{m}{\kappa}\equiv\diag\left(\Jdxildxm{l}{m}(\bmxi{1}),\dots,
  \Jdxildxm{l}{m}(\bmxi{\Nl{\kappa}})\right).
  \end{split}
\end{equation*}
where $\Nl{\kappa}\equiv N^{3}$ is the total number of nodes in element $\kappa$.
With this nomenclature, the discretization of~\eqref{eq:convectionstrong} on the $\kappa\Th$ element reads
 \begin{equation}\label{eq:convectionstrongdisc}
   \matJk{\kappa}\frac{\mr{d}\uk}{\mr{d}t}+\sum\limits_{l,m=1}^{3}\Dxil{l}\matAlmk{l}{m}{\kappa}a_{m}\uk=\bm{SAT},
\end{equation}
where $\bm{SAT}$ is the vector of the SATs used to impose both boundary conditions and or inter-element connectivity. 
Unfortunately, the scheme~\eqref{eq:convectionstrongdisc} is not guaranteed to be stable. However, a well-known remedy is to consider a canonical splitting which 
is arrived at by summing one half of~\eqref{eq:convectionchain} 
and one half of~\eqref{eq:convectionstrong1}, giving 
\begin{equation}\label{eq:convectionsplit}
  \begin{split}
  &\Jk\frac{\partial\U}{\partial t}+\frac{1}{2}\sum\limits_{l,m=1}^{3}\left\{
    \frac{\partial}{\partial\xil{l}}\left(\Jdxildxm{l}{m}a_{m}\U\right)+
     \Jdxildxm{l}{m}\frac{\partial}{\partial\xil{l}}\left(a_{m}\U\right)
    \right\}\\&-\frac{1}{2}\sum\limits_{l,m=1}^{3}\left\{
    a_{m}\U\frac{\partial}{\partial\xil{l}}\left(\Jdxildxm{l}{m}\right)\right\}=0,
  \end{split}
\end{equation}
where the last set of terms are zero by the GCL conditions~\eqref{eq:GCL}. Then, a 
stable semi-discrete form is constructed in the same manner as the split form~\eqref{eq:convectionsplit} by 
discretizing~\eqref{eq:convectionchain} and~\eqref{eq:convectionstrong1} using $\Dxil{l}$, $\matJk{\kappa}$, and  
$\matAlmk{l}{m}{\kappa}$, and averaging the results. This procedure yields
\begin{equation}\label{eq:convectionsplitdisc}
  \begin{split}
  &\matJk{\kappa}\frac{\mr{d}\uk}{\mr{d} t}+\frac{1}{2}\sum\limits_{l,m=1}^{3}
  a_{m}\left\{\Dxil{l}\matAlmk{l}{m}{\kappa}+\matAlmk{l}{m}{\kappa}\Dxil{l}\right\}\uk
  \\&-\frac{1}{2}\sum\limits_{l,m=1}^{3}\left\{
    a_{m}\diag\left(\uk\right)\Dxil{l}\matAlmk{l}{m}{\kappa}\ones{\kappa}\right\}=\bm{0},
  \end{split}
\end{equation}
where $\ones{\kappa}$ is a vector of ones of the size of the number of nodes on 
the $\kappa\Th$ element. As in the continuous case, the semi-discrete form has a set of discrete GCL conditions
\begin{equation}\label{eq:discGCLconvection}
\sum\limits_{l=1}^{3}
    \Dxil{l}\matAlmk{l}{m}{\kappa}\ones{\kappa}=\bm{0}, \quad m = 1,2,3,
\end{equation}
that if satisfied, lead to the following telescoping (provably stable) semi-discrete form
\begin{equation}\label{eq:convectionsplitdisctele}
  \matJk{\kappa}\frac{\mr{d}\uk}{\mr{d} t}+\frac{1}{2}\sum\limits_{l,m=1}^{3}
  a_{m}\left\{\Dxil{l}\matAlmk{l}{m}{\kappa}+\matAlmk{l}{m}{\kappa}\Dxil{l}\right\}\uk =0.
\end{equation}
\begin{remark}
  The linear stability of semi-discrete operators for constant coefficient hyperbolic systems, 
  is not preserved by arbitrary design order approximations to the metric terms.  Only approximations to the metric terms that satisfy
   discrete GCL~\eqref{eq:discGCLconvection} conditions lead to stable semi-discrete forms, a point that can
  not be overemphasized in the context of this work. 
\end{remark}
\begin{remark}
  The satisfaction of the discrete GCL conditions~\eqref{eq:discGCLconvection} for tensor-product 
  conforming discretizations has well-known solutions (Vinokur and Yee~\cite{Vinokur2002a} or Thomas and Lombard~\cite{Thomas1979}) 
  which require that the differentiation matrices commute, \ie $\Dxil{1}\Dxil{2}=\Dxil{2}\Dxil{1}$, \etc.
\end{remark}
\begin{remark}
  The discrete GCL conditions can alternatively be derived by inputting a constant solution into~\eqref{eq:convectionsplitdisc}.
\end{remark}

\subsection{Scalar convection and the nonconforming interface}\label{sec:ICASE_Eqn_NonConform}
The analysis and presentation of nonconforming semi-discrete algorithms is simplified by considering a single interface
between two adjoining elements as shown in Figure~\ref{fig:non}.  The elements have
aligned computational coordinates and the shared interface is vertical. The nonconformity is assumed to arise from 
local approximations with differing polynomial degrees. Specifically, the left element has 
polynomial degree $\pL$ (low: subscript/superscript $\rm{L}$) and the right element has polynomial degree $\pH$ 
(high: subscript/superscript $\rm{H}$) where $\pH>\pL$ (see Figure~\ref{fig:non}). 
The first task is to construct SBP operators that span both elements: i.e., macro element operators
composed of elements $\rmL$ and $\rmH$. A naive construction would be the following operators in the three coordinate
directions:
\begin{equation}\label{eq:naive}
    \DtildeH{1}\equiv\left[
  \begin{array}{cc}
    \Dxil{1}^{\rmL}&\\
    &\Dxil{1}^{\rmH}
  \end{array}  
  \right] 
  \quad ; \quad
    \Dtildel{2}\equiv\left[
   \begin{array}{cc}
     \Dxil{2}^{\rmL}\\
     &\Dxil{2}^{\rmH} 
   \end{array}
   \right]
  \quad ; \quad
    \Dtildel{3}\equiv\left[
   \begin{array}{cc}
     \Dxil{3}^{\rmL}\\
     &\Dxil{3}^{\rmH} 
   \end{array}
   \right]
\end{equation}
The $\Dtildel{2}$ and $\Dtildel{3}$ macro element operators are by construction SBP operators; the action of their 
differentiation is parallel to the interface, and they telescope out to the boundaries.  
The $\DtildeH{1}$ is not by construction an SBP operator, despite 
the individual matrices composing $\DtildeH{1}$ being SBP operators.
In addition, and more critically, there is no coupling between the two elements. 
Thus, interface coupling must be introduced between the two elements that render the operators on the macro element as SBP operators.
For this purpose, interpolation operators are needed 
that interpolate information from element $\rmH$ to element $\rmL$ and vice versa.  For simplicity, 
the interpolation operators use only tensor product surface information from the adjoining interface surface.

With this background, general matrix difference operators between the two elements are constructed as 
\begin{equation}
  \Dtildel{l}=\Mtilde^{-1}\Qtildel{l}=\Mtilde^{-1}\left(\Stildel{l}+\frac{1}{2}\Etildel{l}\right).
\end{equation}
Focusing on the direction orthogonal to the interface ($\xi_1$) the relevant matrices are given by 
\begin{equation}\label{eq:marcoD} 
  \begin{split}
  &\Mtilde\equiv\diag\left[
  \begin{array}{cc}  
  \M^{\rmL}\\
  &\M^{\rmH}
  \end{array}
  \right],\\
  &\Stildel{1}\equiv\\
  &
  \left[
  \begin{array}{cc}
    \Sxil{1}^{\rmL}&\frac{1}{2}\left(\eNl{\rmL}\eonel{\rmH}\Tr\otimes\PoneD_{\rmL}\IHtoLoneD\otimes\PoneD_{\rmL}\IHtoLoneD\right)\\
    -\frac{1}{2}\left(\eonel{\rmH}\eNl{\rmL}\Tr\otimes\PoneD_{\rmH}\ILtoHoneD\otimes\PoneD_{\rmH}\ILtoHoneD\right)&\Sxil{1}^{\rmH}
  \end{array}
  \right],\\
  &\Etildel{1}\equiv\\
  &
  \left[
  \begin{array}{cc}
    -\eonel{\rmL}\eonel{\rmL}\Tr\otimes\PoneD_{\rmL}\otimes\PoneD_{\rmL}\\
    &\eNl{\rmH}\eNl{\rmH}\Tr\otimes\PoneD_{\rmH}\otimes\PoneD_{\rmH}
  \end{array}
  \right],
\end{split}
\end{equation}
and $\IHtoLoneD$ and $\ILtoHoneD$ are one-dimensional interpolation operators from the $\rmH$ element
to the $\rmL$ element and vice versa. 

A necessary constraint the SBP formalism places on $\Dtildel{1}$, is skew-symmetry of the 
$\Stildel{1}$ matrices. The block-diagonal matrices in $\Stildel{1}$ are 
already skew-symmetric but the off diagonal blocks are not. 
Thus, it is necessary to satisfy the following
condition:
\begin{equation*}
\left(\eNl{\rmL}\eonel{\rmH}\Tr\otimes\PoneD_{\rmL}\IHtoLoneD\otimes\PoneD_{\rmL}\IHtoLoneD\right) = \left\{\left(\eonel{\rmH}\eNl{\rmL}\Tr\otimes\PoneD_{\rmH}\ILtoHoneD\otimes\PoneD_{\rmH}\ILtoHoneD\right)\right\}\Tr.
\end{equation*}
This implies that the interpolation operators are related to each other as follows:
\begin{equation*}
  \IHtoLoneD=\left(\PoneD_{\rmL}\right)^{-1}\left(\ILtoHoneD\right)\Tr\PoneD_{\rmH}.
\end{equation*}
This property is denoted as the SBP preserving property because it leads to 
a macro element differentiation matrix that is an SBP operator. The optimal interpolation operator, 
$\ILtoHoneD$, is constructed to exactly interpolate polynomial of degree $\pL$ 
and can be easily constructed as follows:
\begin{equation*}
\ILtoHoneD=\left[\bm{\xi}_{\rmH}^{0},\dots,\bm{\xi}_{\rmH}^{\pL}\right]\left[\bm{\xi}_{\rmL}^{0},\dots,\bm{\xi}_{\rmL}^{\pL}\right]^{-1},
\end{equation*}
where $\bm{\xi}_{\rmL}$ and $\bm{\xi}_{\rmH}$ are the one-dimensional nodal distributions in computational space of the two elements. 
The companion interpolation 
operator $\IHtoLoneD$ is sub-optimal by one degree ($\pL-1$), a consequence of satisfying the necessary 
SBP-preserving property (see Friedrich~\etal~\cite{Friedrich2018} for a complete discussion). 

The semi-discrete skew-symmetric split operator given in \Eq~\eqref{eq:convectionsplitdisc}, 
discretized using the macro element operators $\Dtildel{l}$, and metric information $\Jtilde$, $\matAlmk{l}{m}{}$,
leads to the following:
\begin{equation}\label{eq:macro}
  \begin{split}
  &\Jtilde\frac{\mr{d}\utilde}{\mr{d}t}+
  \frac{1}{2}\sum\limits_{l,m=1}^{3}a_{m}\left(\Dtildel{l}\tildebarmatAlm{l}{m}+\tildebarmatAlm{l}{m}\Dtildel{l}\right)\utilde\\
  &-\frac{1}{2}\sum\limits_{l,m=1}^{3}a_{m}\diag\left(\utilde\right)\Dtildel{l}\tildebarmatAlm{l}{m}\tildeone
  =\bm{0},
  \end{split}
\end{equation} 
where
\begin{equation}\label{eq:marcmetrics} 
  \begin{split}
  &\utilde\equiv\left[\uL\Tr,\uH\Tr\right]\Tr,
  \Jtilde\equiv\diag\left[
  \begin{array}{cc}  
  \matJk{\rmL}\\
  &\matJk{\rmH}
  \end{array}
  \right],
  \matAlmk{l}{m}{}\equiv
  \left[
  \begin{array}{cc}
  \matAlmk{l}{m}{\rmL}\\
  &\matAlmk{l}{m}{\rmH}
\end{array}  
  \right].
\end{split}
\end{equation}
As was the case in \Eq~\eqref{eq:convectionsplitdisc}, a necessary condition for stability is that the metric
terms satisfy the following discrete GCL conditions:
\begin{equation}\label{eq:discGCLmacro}
  \sum\limits_{l=1}^{3}\Dtildel{l}\tildebarmatAlm{l}{m}\tildeone=\bm{0}.
\end{equation}
Recognizing that $\Dtildel{1}$ is not a tensor product operator, discrete metrics constructed using 
the analytic formalism of Vinokur and Yee~\cite{Vinokur2002a} or Thomas and Lombard~\cite{Thomas1979} 
will not in general satisfy the discrete GCL condition required in \Eq~\eqref{eq:discGCLmacro}.  
The only viable alternative is to solve for discrete metrics that directly satisfy the GCL constraints.

\begin{remark}
Note that metric terms are assigned colors; e.g.,  
the time-term Jacobian: $\Jtilde$ or the volume metric terms: $\tildebarmatAlm{l}{m}$.  
Metric terms with common colors form a clique and must be formed consistently.  
For example, the time-term Jacobian and the volume metric Jacobian need not be equivalent.  Another
important clique: the surface metrics, will be introduced in the next subsection.
\end{remark}

\subsection{Isolating the metrics}\label{sec:mortar:metrics1}
The system~\eqref{eq:discGCLmacro} is a highly under-determined system which 
couples the approximation of the metrics in both elements.
Worse still, whole clouds of nonconforming elements would in general need to be solved simultaneously, 
making the approach undesirable and even impracticable!
A close examination of the volume terms provides insight on how to overcome this issue:
\begin{equation}\label{eq:combined}
  \begin{split}
&\Mtilde\left(\Dtildel{1}\tildebarmatAlm{1}{m}+\tildebarmatAlm{1}{m}\Dtildel{1}\right)=\\
 &\left[
  \begin{array}{cc}
   \mat{A}_{11}&
\mat{A}_{12}\\
-\mat{A}_{12}\Tr
      &\mat{A}_{22}
  \end{array}
  \right]
  +\left(\Etildel{1}\tildebarmatAlm{1}{m}+\tildebarmatAlm{1}{m}\Etildel{1}\right),\\\\
  &\mat{A}_{11}\equiv \left\{\Sxil{1}^{\rmL}\matAlmk{1}{m}{\rmL}+\matAlmk{1}{m}{\rmL}\Sxil{1}^{\rmL}\right\},\\
  &\mat{A}_{12}=  \frac{1}{2}\left\{\begin{array}{l}
    \colorbox{yellow}{\matAlmk{1}{m}{\rmL}}\left(\eNl{\rmL}\eonel{\rmH}\Tr\otimes\PoneD_{\rmL}\IHtoLoneD\otimes\PoneD_{\rmL}\IHtoLoneD\right)\\
    +\left(\eNl{\rmL}\eonel{\rmH}\Tr\otimes\PoneD_{\rmL}\IHtoLoneD\otimes\PoneD_{\rmL}\IHtoLoneD\right)\colorbox{red}{\matAlmk{1}{m}{\rmH}}\end{array}\right\},\\
    &\mat{A}_{22}\equiv\left\{\Sxil{1}^{\rmH}\matAlmk{1}{m}{\rmH}+\matAlmk{1}{m}{\rmH}\Sxil{1}^{\rmH}\right\}.
  \end{split}
\end{equation}
Replacing the highlighted off-diagonal metric terms in \eqref{eq:combined} with known metrics data,
by construction preserves the skew-symmetry of the operator $\Stildel{l}$, but decouples 
the discrete GCL conditions \eqref{eq:discGCLmacro}.  The highlighted off-diagonal surface metric terms:
$\rmL$ and $\rmH$ become forcing data for the GCL condition.  Note that $\rmL$ and $\rmH$ 
need not be equivalent, provided they are design order close.
\begin{remark}
  More generally, $\mat{A}_{12}$ can be composed of any terms that are design order interpolations from one element to the other and these can be constructed in 
  a very general way, for example, one might consider first interpolating data to an intermediate set of Gauss nodes on which the metric terms 
  are specified. The theoretical results in this paper apply to any such approach that is design order, satisfied the structural requirements presented above, 
  and that uses specified metric information in the coupling terms.
\end{remark}
With this approach, the discrete GCL conditions become (where contributions from the boundary SATs have been ignored)
\begin{equation}\label{eq:discGCLL}
  \begin{split}
  &2\M^{\rmL}\sum\limits_{l=1}^{3}\Dxil{l}^{\rmL}\matAlmk{l}{m}{\rmL}\ones{\rmL}=\\
  &\left\{
  \left(\eNl{\rmL}\eNl{\rmL}\Tr\otimes\PoneD_{\rmL}\otimes\PoneD_{\rmL}\right)\matAlmk{1}{m}{\rmL}
  +\matAlmk{1}{m}{\rmL}\left(\eNl{\rmL}\eNl{\rmL}\Tr\otimes\PoneD_{\rmL}\otimes\PoneD_{\rmL}\right)\right\}\ones{\rmL}\\
  &-\left\{
\left(\eNl{\rmL}\otimes\Imat{\Nl{\rmL}}\otimes\Imat{\Nl{\rmL}}\right)
  \matAlmkLH{1}{m}{\rmL}{\Ghat}{red}\left(\PoneD_{\rmL}\IHtoLoneD\otimes\PoneD_{\rmL}\IHtoLoneD\right)
  \left(\eonel{\rmH}\Tr\otimes\Imat{\Nl{\rmH}}\otimes\Imat{\Nl{\rmH}}\right)\right\}\ones{\rmH}\\
  &-\left\{
\left(\eNl{\rmL}\otimes\Imat{\Nl{\rmL}}\otimes\Imat{\Nl{\rmL}}\right)
  \left(\PoneD_{\rmL}\IHtoLoneD\otimes\PoneD_{\rmL}\IHtoLoneD\right)\matAlmkLH{1}{m}{\rmH}{\Ghat}{red}
  \left(\eonel{\rmH}\Tr\otimes\Imat{\Nl{\rmH}}\otimes\Imat{\Nl{\rmH}}\right)\right\}\ones{\rmH},
  \end{split}
\end{equation}
\begin{equation}\label{eq:discGCLH}
  \begin{split}
  &2\M^{\rmL}\sum\limits_{l=1}^{3}\Dxil{l}^{\rmH}\matAlmk{l}{m}{\rmH}\ones{\rmH}=\\
  &-\left\{
  \left(\eonel{\rmH}\eonel{\rmH}\Tr\otimes\PoneD_{\rmH}\otimes\PoneD_{\rmH}\right)\matAlmk{1}{m}{\rmH}
  +\matAlmk{1}{m}{\rmH}\left(\eonel{\rmL}\eonel{\rmH}\Tr\otimes\PoneD_{\rmH}\otimes\PoneD_{\rmH}\right)\right\}\ones{\rmH}\\
  &+\left\{
\left(\eonel{\rmH}\otimes\Imat{\Nl{\rmH}}\otimes\Imat{\Nl{\rmL}}\right)
  \matAlmkLH{1}{m}{\rmH}{\Ghat}{red}\left(\PoneD_{\rmH}\ILtoHoneD\otimes\PoneD_{\rmH}\ILtoHoneD\right)
  \left(\eNl{\rmL}\Tr\otimes\Imat{\Nl{\rmL}}\otimes\Imat{\Nl{\rmL}}\right)\right\}\ones{\rmL}\\
  &+\left\{
\left(\eonel{\rmH}\otimes\Imat{\Nl{\rmH}}\otimes\Imat{\Nl{\rmH}}\right)
  \left(\PoneD_{\rmH}\ILtoHoneD\otimes\PoneD_{\rmH}\ILtoHoneD\right)\matAlmkLH{1}{m}{\rmL}{\Ghat}{red}
  \left(\eNl{\rmL}\Tr\otimes\Imat{\Nl{\rmL}}\otimes\Imat{\Nl{\rmL}}\right)\right\}\ones{\rmL}.
  \end{split}
\end{equation}
The matrix $\matAlmkLH{1}{m}{\rmL}{\Ghat}{red}$ is of size $\Nl{\rmL}^{2}\times\Nl{\rmL}^{2}$ ($\Nl{\rmL}$ is the number of nodes in each computational direction on element $\rmL$) and its 
diagonal elements are 
approximations to the 
metrics on the surface nodes of element $\rmL$ at the shared interface. An analogous definition holds
for $\matAlmkLH{1}{m}{\rmH}{\Ghat}{red}$. In order to decouple the two systems of equations in~\eqref{eq:discGCLL} and 
\eqref{eq:discGCLH}
the terms in $\matAlmkLH{1}{m}{\rmL}{\Ghat}{red}$ and $\matAlmkLH{1}{m}{\rmH}{\Ghat}{red}$ need to be specified, for example, 
using the analytic metrics, which is the approach used in this paper. For later use, we introduce notation for the macro element $\Dtildelm{l}{m}$ which is 
the macro element operator constructed as descried above for the metric terms $\Jdxildxm{l}{m}$.

The next section details how to construct the metrics so that the remaining discrete GCL conditions are satisfied.
\subsection{Metric solution mechanics}\label{sec:mortar:metrics2}
In this section, to demonstrate the proposed approach for approximating the metric terms, the discrete GCL conditions associated with the 
element $\rmL$~\eqref{eq:discGCLL} are used.

Note that for an arbitrary interior element, the discrete GCL system that needs to be solved includes 
the coupling terms on all six faces, while for the simplified problem considered above only the 
(nonconforming) coupling on the vertical interface appears.

\Eq~\eqref{eq:discGCLL} can be algebraically manipulated into a form that is more convenient for constructing a 
solution procedure for the metric terms. 
This form  is derived by multiplying \Eq~\eqref{eq:discGCLL} by $-1$, using the SBP property 
$\QoneDL = -\left(\QoneDL\right)\Tr+\EoneDL$, and canceling common terms. 
Doing so results in 

\begin{equation}\label{eq:couplingsecondtwo}
  \begin{split}
&2\sum\limits_{l=1}^{3}
\left(\Qxil{l}^{\rmL}\right)\Tr\matAlmk{l}{m}{\rmL}\ones{\rmL}=\\
&-\left\{
\left(\eNl{\rmL}\otimes\Imat{\Nl{\rmL}}\otimes\Imat{\Nl{\rmL}}\right)
  \matAlmkLH{1}{m}{\rmL}{\Ghat}{red}\left(\PoneD_{\rmL}\IHtoLoneD\otimes\PoneD_{\rmL}\IHtoLoneD\right)
  \left(\eonel{\rmH}\Tr\otimes\Imat{\Nl{\rmH}}\otimes\Imat{\Nl{\rmH}}\right)\right\}\ones{\rmH}\\
  &-\left\{
\left(\eNl{\rmL}\otimes\Imat{\Nl{\rmL}}\otimes\Imat{\Nl{\rmL}}\right)
  \left(\PoneD_{\rmL}\IHtoLoneD\otimes\PoneD_{\rmL}\IHtoLoneD\right)\matAlmkLH{1}{m}{\rmH}{\Ghat}{red}
  \left(\eonel{\rmH}\Tr\otimes\Imat{\Nl{\rmH}}\otimes\Imat{\Nl{\rmH}}\right)\right\}\ones{\rmH},\\
 &m = 1,2,3,
  \end{split}
\end{equation}
where 
$\Qxil{1}^{\rmL}\equiv\QoneDL\otimes\PoneD_{\rmL}\otimes\PoneD_{\rmL}$, 
$\Qxil{2}^{\rmL}\equiv\PoneD_{\rmL}\otimes\QoneDL\otimes\PoneD_{\rmL}$, $\Qxil{3}^{\rmL}\equiv\PoneD_{\rmL}\otimes\PoneD_{\rmL}\otimes\QoneDL$. 
Note that the contributions from the $\Exil{l}$ from the left-hand side 
(\ie, coming from the step $\mat{Q}=-\mat{Q}\Tr+\mat{E}$) related to the boundaries of the macro element 
are ignored. This contributions interact with the boundary SATs in the same way as the interface does.

The metric terms in \Eq~\eqref{eq:couplingsecondtwo} are determined by solving a 
strictly convex quadratic optimization problem, following the algorithm given in Crean~\etal~\cite{Crean2018}:

\begin{equation}\label{eq:opt}
  \begin{split}
&\min_{\bm{a}_{m}}\frac{1}{2}\left(\bm{a}_{m}^{\rmL}
-\bm{a}_{m,\text{target}}^{\rmL}\right)\Tr\left(\bm{a}_{m}^{\rmL}-\bm{a}_{m,\text{target}}^{\rmL}\right),\quad
\text{subject to}\quad\mat{M}^{\rmL}\bm{a}_{m}^{\rmL}=\bm{c}_{m}^{\rmL},\\
 &m = 1,2,3,
  \end{split}
\end{equation}
where $\bm{a}_{m}^{\rmL}$ and $\bm{a}_{m,\text{target}}^{\rmL}$  
are the optimized and targeted metric terms, respectively, and

\begin{equation*}
\begin{split}
&\left(\bm{a}_{m}^{\rmL}\right)\Tr\equiv\ones{\rmL}\Tr
\left[
\matAlmk{1}{m}{\rmL},
\matAlmk{2}{m}{\rmL},
\matAlmk{3}{m}{\rmL}
\right],\; \mat{M}^{\rmL}\equiv\left[\left(\Qxil{1}^{\rmL}\right)\Tr,\left(\Qxil{2}^{\rmL}\right)\Tr,\left(\Qxil{3}^{\rmL}\right)\Tr\right],\\
&2\bm{c}_{m}^{\rmL}\equiv\\&-\left\{
\left(\eNl{\rmL}\otimes\Imat{\Nl{\rmL}}\otimes\Imat{\Nl{\rmL}}\right)
  \matAlmkLH{1}{m}{\rmL}{\Ghat}{red}\left(\PoneD_{\rmL}\IHtoLoneD\otimes\PoneD_{\rmL}\IHtoLoneD\right)
  \left(\eonel{\rmH}\Tr\otimes\Imat{\Nl{\rmH}}\otimes\Imat{\Nl{\rmH}}\right)\right\}\ones{\rmH}\\
  &-\left\{
\left(\eNl{\rmL}\otimes\Imat{\Nl{\rmL}}\otimes\Imat{\Nl{\rmL}}\right)
  \left(\PoneD_{\rmL}\IHtoLoneD\otimes\PoneD_{\rmL}\IHtoLoneD\right)\matAlmkLH{1}{m}{\rmH}{\Ghat}{red}
  \left(\eonel{\rmH}\Tr\otimes\Imat{\Nl{\rmH}}\otimes\Imat{\Nl{\rmH}}\right)\right\}\ones{\rmH},
\end{split}
\end{equation*}
with $\bm{a}_{m}^{\rmL}$ of size $3 (\Nl{\rmL})^{3}\times 1$, 
$\MM^{\rmL}$ of size $(\Nl{\rmL})^{3}\times 3 (\Nl{\rmL})^{3}$ and 
$\bm{c}_{m}^{\rmL}$ of size $(\Nl{\rmL})^{3}\times 1$.
The optimal solution, in the Cartesian $2$-norm, is given by (see Proposition $1$ in Crean~\etal~\cite{Crean2018})

\begin{equation}\label{eq:opt1}
\bm{a}_{m}^{\rmL}=\bm{a}_{m,\text{target}}^{\rmL}-\left(\MM^{\rmL}\right)^{\dagger}\left(\MM^{\rmL} 
\bm{a}_{m,\text{target}}^{\rmL}-\bm{c}_{m}^{\rmL}\right),
\end{equation}
with $\left(\MM^{\rmL}\right)^{\dagger}$ the Moore-Penrose pseudo-inverse of $\MM^{\rmL}$. 
The pseudo-inverse is computed using a singular value decomposition (SVD) of $\MM^{\rmL}$

\begin{equation*}
\MM^{\rmL}=\mat{U}^{\rmL}\mat{\Sigma}^{\rmL}\left(\mat{V}^{\rmL}\right)\Tr,\quad\left(\MM^{\rmL}\right)^{\dagger}=
\mat{V}^{\rmL}\left(\mat{\Sigma}^{\rmL}\right)^{\dagger}\left(\mat{U}^{\rmL}\right)\Tr,
\end{equation*}
with $\mat{U}^{\rmL}$ an $(\Nl{\rmL})^{3}\times (\Nl{\rmL})^{3}$ unitary matrix, 
$\mat{\Sigma}^{\rmL}$ an $(\Nl{\rmL})^{3}\times (\Nl{\rmL})^{3}$ diagonal matrix
containing the singular values of $\MM^{\rmL}$, and  
$\left(\mat{V}^{\rmL}\right)\Tr$ an $(\Nl{\rmL})^{3}\times 3(\Nl{\rmL})^{3}$ matrix with orthonormal rows. 
Although the solution ($\bm{a}_{m}^{\rmL}$ given by \Eq~\eqref{eq:opt1}) is optimal, 
it is not guaranteed to ${\bf exactly}$ satisfy \Eq~\eqref{eq:opt} (i.e., machine precision).  
The following theorem establishes an additional constrain on the surface metrics.
\begin{thrm}\label{thrm:coup5}
The surface metrics, $\bm{c}_{m}^{\rmL}$, must satisfy the additional constraint
\begin{equation}\label{eq:constrintmetrics}
\ones{\rmL}\Tr\bm{c}_{m}^{\rmL} = 0 \:,
\end{equation} 
to guarantee an exact solution of the discrete GCL condition given in \Eq~\eqref{eq:opt}. 
\end{thrm}
\begin{proof}
The matrix $\MM^{\rmL}$ is constructed from the transposes of three derivative operators:
$\left(\Dxil{l}^{\rmL}\right)\Tr\M^{\rmL}\:=\: \left(\Qxil{l}^{\rmL}\right)\Tr$, with the constant vector 
$\ones{\rmL}$, in its null space,
i.e., $\ones{\rmL}\Tr\left(\Qxil{l}^{\rmL}\right)\Tr=\bm{0}\Tr$.  Thus, the
matrix $\mat{M}^{\rmL}$ has a row rank of $(\Nl{\rmL})^{3}-1$ and one zero singular value.

Assemble the singular values in the diagonal matrix $\left(\mat{\Sigma}^{\rmL}\right)$ in descending order. 
This implies that i) the final singular value is zero, $\mat{\Sigma}((\Nl{\rmL})^{3},(\Nl{\rmL})^{3})=0$,
ii) the final orthonormal column vector in $\mat{U}^{\rmL}$ is a constant multiple of the vector, $\ones{\rmL}$, 
and iii) the pseudo-inverse is given by 
$\left(\mat{\Sigma}^{\rmL}\right)^{\dagger}(i,i) = \frac{1}{\mat{\Sigma}^{\rmL}(i,i)}$, $i=1,\dots,(\Nl{\rmL})^{3}-1$. 

The optimal solution given in \Eq~\eqref{eq:opt1} can thus be expressed as 
\begin{equation*}
\bm{a}_{m}^{\rmL}=\bm{a}_{m,\text{target}}^{\rmL}
-
\mat{V}^{\rmL} \left(\mat{\Sigma}^{\rmL}\right)^{\dagger}
\left(
   \mat{\Sigma}^{\rmL}\left(\mat{V}^{\rmL}\right)\Tr \bm{a}_{m,\text{target}}^{\rmL} 
- (\mat{U}^{\rmL})\Tr \bm{c}_{m}^{\rmL} \right) .
\end{equation*}
Since the diagonal matrices $\left(\mat{\Sigma}^{\rmL}\right)^{\dagger}$ and  $\mat{\Sigma}^{\rmL}$ 
are rank deficient in their $((\Nl{\rmL})^{3},(\Nl{\rmL})^{3})$ positions,  a zero solution exists only if the final 
constant vector, dotted with the metric data is zero, i.e., $(\mat{U}^{\rmL})\Tr \bm{c}_{m}^{\rmL} \:=\: 
\ones{\rmL}\Tr\bm{c}_{m}^{\rmL} = 0$.  The proof is established.
\end{proof}

Inspection of the definition for $\bm{c}_{m}^{\rmL}$ given in \Eq~\eqref{eq:constrintmetrics},
reveals that the constraint $\ones{\rmL}\Tr\bm{c}_{m}^{\rmL}\:=\:0$, may be interpreted as 
the discrete integral of surface metric data.  This integral must be zero as is motivated by the following derivation.

Integrate the continuous GCL equations over the domain $\OhatL$, and apply Gauss' divergence theorem 
to the integral.  The resulting expressions are
\begin{equation*}
\int_{\OhatL}\sum\limits_{l=1}^{3}
\frac{\partial}{\partial\xil{l}}\left(\fnc{J}\frac{\partial\xil{l}}{\partial\xm{m}}\right)\mr{d}\OhatL=
\oint_{\GammaL}\sum\limits_{l=1}^{3}\left(\fnc{J}\frac{\partial\xil{l}}{\partial\xm{m}}\right)\nxil{l}\mr{d}\GammaL
\approx \: \ones{\rmL}\Tr\bm{c}_{m}^{\rmL}\:=\:0.
\end{equation*}
Thus, the constraint $\ones{\rmL}\Tr\bm{c}_{m}^{\rmL}\:=\:0$ 
is the discrete surface metric equivalent of the analytic surface integral consistency constraint.

There are at least two convenient ways of enforcing this
constraint: 1) polynomially exact (analytic) surface metrics or 2) metrics constructed using conventional
FD approaches~\cite{Vinokur2002a,Thomas1979}.  The adequacy of these approaches is demonstrated by a careful investigation
of the surface metrics arising from tensor-product transformations. 

Consider a degree $p$ tensor-product curvilinear coordinate transformation. 
The surface metrics and the coupling terms constructed on the joint surface are of degree $2p-1$ in the 
surface orthogonal directions. Thus, a surface mass matrix with quadrature strength $2p-1$, (e.g., LGL) 
integrates the surface terms exactly and immediately satisfies the constraint: $\ones{\rmL}\Tr\bm{c}_{m}^{\rmL}=0$.
The following theorem establishes that the resulting coupling terms satisfy the required integral constraints.



\begin{thrm}\label{thrm:coup}
The coupling terms constructed on a conforming face between a conforming element and an element 
that has at least one other face that is nonconforming satisfy the condition~\eqref{eq:constrintmetrics} if either 
1) analytic or 2) standard FD approaches (e.g., Vinokur and Yee~\cite{Vinokur2002a} or Thomas and Lombard), 
are used to seed the metric terms.
\end{thrm}
\begin{proof}
This proof follows by inserting the approximation of the metric terms using 1) analytic, or 2) either~Vinokur or Yee~\cite{Vinokur2002a} or Thomas and Lombard~\cite{Thomas1979},
and the polynomial exactness of the differentiation matrices and the norm matrices 
\iftoggle{proofs}{ see Appendix~\ref{app:coup}.
}
{
(the details of the proof are given in Appendix G.~\cite{Fernandez2018_TM}).}
\end{proof}

\begin{remark}
The 1) GCL optimized volume metrics on all faces of a nonconforming element, and the
    2) surface metrics specified in the coupling matrices, are both surface normal approximations but
in general differ by a design order term.  They are weakly coupled through the GCL constraint given in equation~\eqref{eq:couplingsecondtwo}.
\end{remark}

The following theorem summarizes the properties of the proposed coupling approach.
\begin{thrm}\label{thrm:summarymort}
The proposed coupling procedure results in a scheme that has the following properties:
\begin{itemize}
\item The scheme is discretely entropy conservative (\ie, neutral discrete $L^{2}$ stability under the assumption of positive temperature and density)
\item The scheme is discretely elementwise conservative and preserves freestream
\item The scheme is of order $\pL+d-1$ where $\pL$ is the lowest degree used (the $+d$ comes from the 
fact that the PDE and therefore the discretization have been multiplied by the metric Jacobian which scales as $h^{d}$)
\end{itemize}
\end{thrm}
\begin{proof}
\iftoggle{proofs}{
The proof of entropy conservation is given in Section~\eqref{sec:semiEuler} while appropriate interface 
dissipation leading to entropy stability is discussed in Section~\ref{sec:dissipation}. Element-wise conservation 
is proven in Appendix~\ref{sec:gen_element_wise_conservation}. The order of the scheme results from the fact that one of the interpolation 
operators is suboptimal ($\pL-1$).
}
{The proof of entropy conservation is given in Section~\eqref{sec:semiEuler} while appropriate interface 
dissipation leading to entropy stability is discussed in Section~\ref{sec:dissipation}. Element-wise conservation 
is proven in Appendix E 
in~\cite{Fernandez2018_TM}. The order of the scheme results from the fact that one of the interpolation 
operators is suboptimal ($\pL-1$).}
\end{proof}
\section{Nonlinearly stable schemes: Burgers' equation}\label{sec:Burgers}
The nonconforming interface coupling mechanics presented in the previous section will be generalized 
for the compressible Euler equations in Section~\ref{sec:Euler}. 
Proving stability of nonlinear conservation laws, in general requires elaborate techniques well beyond 
simply discretizing the derivatives of the fluxes directly with SBP operators. Next, the Hadamard derivative
formalize is introduced in the context of one-dimensional inviscid Burgers' equation, which has the desirable
property of being amenable to either 1) a canonical derivative splitting, or 2) the Hadamard derivative 
approach.  The equivalence between the two approaches for Burgers' equation, is established 
elsewhere \cite{Fisher2013,Carpenter2015}.

Inviscid Burgers' equation and its well-known canonical splitting are of the form
\begin{equation}\label{eq:Burgerssplit}
  \frac{\partial\U}{\partial t}+\frac{\partial}{\partial \xm{1}}\left(\frac{\U^{2}}{2}\right) = 0 \quad;\quad
  \frac{\partial\U}{\partial t}+\frac{1}{3}\frac{\partial}{\partial \xm{1}}\left(\U^{2}\right)+\frac{\U}{3}\frac{\partial\U}{\partial \xm{1}} = 0.
\end{equation}
Performing an energy analysis on the split form of~\eqref{eq:Burgerssplit} gives (see, for instance, \cite{Carpenter2015} for details) 
\begin{equation}\label{eq:energyBurgers}
  \frac{1}{2}\frac{\mr{d}\|\U\|^{2}}{\mr{d}t}+\oint_{\Gamma}\frac{\U^{3}}{3}\nxm{1}\mr{d}\Gamma=0,\quad\|\U\|^{2}\equiv\int_{\Omega}\U^{2}\mr{d}\Omega.
\end{equation}
In order to prove stability, the semi-discrete scheme needs to mimic~\eqref{eq:energyBurgers} in the sense 
that when contracted with $\bm{u}\Tr\M$ (the discrete analogue of multiplying by the 
solution and integrating in space) the result is given by the sum of spatial terms that telescope 
to the boundaries. 

The discretization of~\eqref{eq:Burgerssplit} with SBP operators (ignoring the SATs) is given as 
\begin{equation}\label{eq:Burgerssplitdisc}
  \frac{\mr{d}\bm{u}}{\mr{d} t}+ \frac{1}{3}\Dxm{1}\diag\left(\bm{u}\right)\bm{u}+\frac{1}{3}\diag\left(\bm{u}\right)\Dxm{1}\bm{u}= 0.
\end{equation}
Multiplying~\eqref{eq:Burgerssplitdisc} by $\bm{u}\Tr\M$ results in 
\begin{equation}\label{eq:Burgerssplitdiscenergy}
  \frac{1}{2}\frac{\mr{d}\bm{u}\Tr\M \bm{u}}{\mr{d} t}+\frac{1}{3}\left(\bm{u}^{3}(N)-\bm{u}^{3}(1)\right)= 0,
\end{equation}
for which each term mimics~\eqref{eq:energyBurgers} and has the telescoping property, 
\ie, what remains are terms 
at the boundaries.

Now the discretization~\eqref{eq:Burgerssplitdisc} is recast using the Hadamard derivative formalism.
The Hadamard derivative operator and the equivalent split form operators are given as follows 
\begin{equation}\label{eq:Burgershadamard}
   2\Dxm{1}\circ\matFxm{1}{\bm{u}}{\bm{u}}\ones{1}
  \quad \leftrightarrow \quad
  \frac{1}{3}\Dxm{1}\diag\left(\bm{u}\right)\bm{u}+\frac{1}{3}\diag\left(\bm{u}\right)\Dxm{1}\bm{u} \: .
\end{equation}

The Hadamard operator is capable of compactly representing various 
split forms, and more importantly, extends to nonlinear equations for which a canonical split form is inappropriate.

The Hadamard derivative operator is constructed from two components:  1)  an SBP derivative operator, and 2) a two
point flux function specific to the physics being modeled.  Two point fluxes are constructed between the center
point and all other points of dependency within
the SBP stencil.  The SBP telescoping property~\cite{Fisher2013} that results from precise local cancellation 
of spatial terms, can then be extended directly to nonlinear operators.  A simple example is now presented.

Consider the two point Burgers' flux function defined by \cite{Tadmor2003,Carpenter2015} 
\begin{equation*}
\fxmsc{m}{\bmui{i}}{\bmui{j}}\equiv 
 \frac{\left\{\left(\bmui{i}\right)^{2}+\bmui{i}\bmui{j}+\left(\bmui{j}\right)^{2}\right\}}{6},
\end{equation*}
where $\bmui{i}$ and $\bmui{j}$ are the $i\Th$ and $j\Th$ components of $\bm{u}$,
and for the purpose of demonstration, a simple SBP operator constructed on the LGL nodes $\left(-1,0,1\right)$
\begin{equation*}
    \mat{D}_{\xm{1}}=
    \left[ \begin {array}{ccc} -\frac{3}{2}&2&-\frac{1}{2}\\ \noalign{\medskip}-\frac{1}{2}&0&\frac{1}{2}
\\ \noalign{\medskip}\frac{1}{2}&-2&\frac{3}{2}\end {array} \right]. 
\end{equation*}
The two argument Hadamard matrix flux, \matFxm{m}{\bm{u}}{\bm{u}} is given as
\begin{equation*}
  \begin{split}
    &\matFxm{m}{\bm{u}}{\bm{u}}=\\
    &\left[
        \begin{array}{ccc}
            \frac{\left(\bmui{1}\right)^{2}}{2}&
            \frac{\left(\bmui{1}\right)^{2}+\bmui{1}\bmui{2}+\left(\bmui{2}\right)^{2}}{6}&
            \frac{\left(\bmui{1}\right)^{2}+\bmui{1}\bmui{3}+\left(\bmui{3}\right)^{2}}{6}\\\\
\frac{\left(\bmui{2}\right)^{2}+\bmui{2}\bmui{1}+\left(\bmui{1}\right)^{2}}{6}&
\frac{\left(\bmui{2}\right)^{2}}{2}&
\frac{\left(\bmui{2}\right)^{2}+\bmui{2}\bmui{3}+\left(\bmui{3}\right)^{2}}{6}\\\\
\frac{\left(\bmui{3}\right)^{2}+\bmui{3}\bmui{1}+\left(\bmui{1}\right)^{2}}{6}&
\frac{\left(\bmui{3}\right)^{2}+\bmui{3}\bmui{2}+\left(\bmui{2}\right)^{2}}{6}&
\frac{\left(\bmui{3}\right)^{2}}{2}
        \end{array}
    \right].
      \end{split}
\end{equation*}
Thus, 
\begin{equation*}
  \begin{split}
    &\mat{D}_{\xm{1}}\circ\matFxm{m}{\bm{u}}{\bm{u}}\bm{1}=\\
    &\left[
      \arraycolsep=0.5pt
        \begin{array}{ccc}
           {\scriptscriptstyle-\frac{3}{2} \frac{\left(\bmui{1}\right)^{2}}{2}}&
            {\scriptscriptstyle2\frac{\left(\bmui{1}\right)^{2}+\bmui{1}\bmui{2}+\left(\bmui{2}\right)^{2}}{6}}&
            {\scriptscriptstyle-\frac{1}{2}\frac{\left(\bmui{1}\right)^{2}+\bmui{1}\bmui{3}+\left(\bmui{3}\right)^{2}}{6}}\\\\
{\scriptscriptstyle-\frac{1}{2}\frac{\left(\bmui{2}\right)^{2}+\bmui{2}\bmui{1}+\left(\bmui{1}\right)^{2}}{6}}&
{\scriptscriptstyle0}&
{\scriptscriptstyle\frac{1}{2}\frac{\left(\bmui{2}\right)^{2}+\bmui{2}\bmui{3}+\left(\bmui{3}\right)^{2}}{6}}\\\\
{\scriptscriptstyle\frac{1}{2}\frac{\left(\bmui{3}\right)^{2}+\bmui{3}\bmui{1}+\left(\bmui{1}\right)^{2}}{6}}&
{\scriptscriptstyle-2\frac{\left(\bmui{3}\right)^{2}+\bmui{3}\bmui{2}+\left(\bmui{2}\right)^{2}}{6}}&
{\scriptscriptstyle\frac{3}{2}\frac{\left(\bmui{3}\right)^{2}}{2}}
        \end{array}
    \right]
    \left[
        \begin{array}{c}
            {\scriptscriptstyle1}\\\\
            {\scriptscriptstyle1}\\\\
            {\scriptscriptstyle1}
        \end{array}
        \right].
      \end{split}
\end{equation*}
The equivalence between the two approaches is evident by inspection.

Now the general notation applicable to the compressible Euler equations is described. 
Assume that what is required is the discretization of the derivative of a flux vector $\Fxm{m}$ in the
$\xm{m}$ Cartesian direction. The essential ingredients used above are an SBP matrix difference operator, $\Dxm{m}$, and a 
two argument matrix flux function,
 $\matFxm{m}{\uk}{\ur}$, which is composed of diagonal matrices and is defined block-wise as
\begin{equation*}
    \begin{split}
  &\left(\matFxm{m}{\uk}{\ur}\right)\left(e(i-1)+1:ei,e(j-1)+1:ej\right) \equiv  \diag\left(
  \fxmsc{m}{\uki{(i)}}{\uri{(j)}}    
  \right),\\\\
  &\uki{(i)}\equiv\uk\left(e(i-1)+1:ei\right),\;
  \uri{(j)}\equiv\ur\left(e(j-1)+1:ej\right),\\\\
  &\quad i = 1\dots,\Nl{\kappa}^{3},\; j = 1,\dots,\Nl{r}^{3},
    \end{split}
\end{equation*}
where $e$ is the number of equations in the system of PDEs. For example, for the compressible Euler 
equations, $e=5$ and the two argument matrix flux function is of size 
$\left(e\,\Nl{\kappa}^{3}\right)\times\left(e\,\Nl{r}^{3}\right)$, where $e\Nl{\kappa}^{3}$ and $e\Nl{r}^{3}$ are 
the total number of entries in the vectors $\uk$ and $\ur$ corresponding the solution 
variables in elements $\kappa$ and $r$, respectively. Thus, $\uki{(i)}$ is the vector of the $e$ solution 
variables evaluated at the $i\Th$ node.
The vectors $\fxmsc{m}{\uki{(i)}}{\uri{(j)}}$ 
are constructed from two-point flux functions that are symmetric in their arguments, $\left(\uki{(i)},\uri{(j)}\right)$, and consistent. 
Thus,
\begin{equation*}
 \fxmsc{m}{\uki{(i)}}{\uri{(j)}}= \fxmsc{m}{\uri{(j)}}{\uki{(i)}},
\quad\fxmsc{m}{\uki{(i)}}{\uki{(i)}} = \Fxm{m}\left(\uki{(i)}\right),
\end{equation*}
where $\Fxm{m}$ is the flux vector in the $\xm{m}\Th$ Cartesian direction. Using these ingredients, an approximation to the derivative 
$\frac{\partial\Fxm{m}}{\partial\xm{m}}$ is constructed as 
\begin{equation*}
    2\Dxm{m}\circ\matFxm{m}{\qk{\kappa}}{\qk{\kappa}}\ones{\kappa}\approx
    \frac{\partial\Fxm{m}}{\partial\xm{m}}\left(\bm{\xi}^{\kappa}\right),
\end{equation*}
where $\bm{\xi}^{\kappa}$ is the vector of vectors containing the nodal locations. 
The resulting approximation has the same order properties as differentiating directly with the SBP operator $\Dxm{m}$ 
(see Theorem  $1$ in Crean~\etal~\cite{Crean2018}).

\section{Application to the compressible Euler equations}\label{sec:Euler}
In this section, the nonconforming algorithm presented in Section~\ref{sec:discretizationconvection} is combined with the mechanics presented in Section~\ref{sec:Burgers} 
to construct an entropy conservative discretization of the compressible Euler equations on $p$-nonconforming 
meshes. First, in Section~\ref{sec:ContinuousentropyEuler} the continuous equations and the continuous entropy analysis are reviewed, then 
in Section~\ref{sec:semiEuler} the semi-discrete algorithm for the Euler equations is presented and analyzed.

\subsection{Review of the continuous entropy analysis}\label{sec:ContinuousentropyEuler}
The strong form of the compressible Euler equations in Cartesian coordinates are given as
\begin{equation}\label{eq:compressible_euler}
\begin{aligned}
  &\frac{\partial\Q}{\partial t}+\sum\limits_{m=1}^{3}\frac{\partial \Fxm{m}}{\partial \xm{m}} = 0, &&
\forall \left(\xm{1},\xm{2},\xm{3}\right)\in\Omega,\quad t\ge 0,\\
  &\Q\left(\xm{1},\xm{2},\xm{3},t\right)=\GB\left(\xm{1},\xm{2},\xm{3},t\right), && \forall \left(\xm{1},\xm{2},\xm{3}\right)\in\Gamma,\quad t\ge 0,\\
&\Q\left(\xm{1},\xm{2},\xm{3},0\right)=\Gzero\left(\xm{1},\xm{2},\xm{3},0\right), &&
  \forall \left(\xm{1},\xm{2},\xm{3}\right)\in\Omega.
\end{aligned}
\end{equation}

The vectors $\Q$ and $\Fxm{}$ respectively denote the conserved variables and the inviscid fluxes. The boundary data,
$\GB$, and the initial condition, $\Gzero$, are assumed to be in $L^{2}(\Omega)$,
with the further assumption that $\GB$ will be set to coincide with linear
well-posed boundary conditions and such that entropy conservation or stability is achieved.

It is well-known that the compressible Euler equations \eqref{eq:compressible_euler} possess
a convex extension that, when integrated over the physical domain $\Omega$,
only depends on the boundary data on $\Gamma$. Such an extension yields the entropy function
\begin{equation}\label{eq:entropy_function}
\fnc{S}=-\rho s,
\end{equation}
where $\rho$ and $s$ are the density and the thermodynamic entropy, respectively.
The entropy function, $\fnc{S}$, is convex if the thermodynamic variables
are positive and is a useful tool for proving stability in the $L^{2}$
norm \cite{dafermos-book-2010}.
Following the analysis described in \cite{Carpenter2014,Parsani2015,Carpenter2016},
the system \eqref{eq:compressible_euler} is multiplied by the (local) entropy
variables $\bfnc{W}\Tr = \partial \fnc{S} / \partial \Q$ and by
using the fact that
\begin{equation}\label{eq:compat}
\frac{\partial \fnc{S}}{\partial \Q}\frac{\partial\Fxm{m}}{\partial \xm{m}}=
\frac{\partial \fnc{S}}{\partial \Q}\frac{\partial\Fxm{m}}{\partial \Q}
\frac{\partial\Q}{\partial \xm{m}}=
\frac{\partial\fxm{m}}{\partial\Q}\frac{\partial\Q}{\partial \xm{m}}=
\frac{\partial\fxm{m}}{\partial \xm{m}},\qquad m=1,2,3,
\end{equation}
gives that
\begin{equation}\label{eq:NSCe1}
\begin{split}
\frac{\partial\fnc{S}}{\partial t}+
\sum\limits_{m=1}^{3}\frac{\partial\fxm{m}}{\partial\xm{m}}=0,
\end{split}
\end{equation}
where the scalars $\fxm{m}(\Q)$ are the entropy fluxes in the $\xm{m}$-direction. 
Then, integrating over the domain, $\Omega$, and using integration by parts yields 
\begin{equation}\label{eq:NSCe3}
\displaystyle\int_{\Omega}\frac{\partial\fnc{S}}{\partial t}\mr{d}\Omega\leq
\oint_{\Gamma}\left(\sum\limits_{m=1}^{3}\fxm{m}\nxm{m}\right)\mr{d}\Omega,
\end{equation}
where $\nxm{m}$ is the $m\Th$ component of the outward facing unit normal. 
Note that the equality in \eqref{eq:NSCe3} holds for smooth flows; conversely, 
the inequality is valid for non smooth flow since the mathematical entropy decreases across shocks.

To obtain a bound on the solution, the inequality~\eqref{eq:NSCe3} is integrated in time and assuming 
boundary conditions and an initial condition that are nonlinearly well posed, and positivity of density 
and temperature, the result can be turned into a bound on the solution in terms of the data of the problem (see~\cite{Svard2015,dafermos-book-2010}). Example of fully-discrete explicit entropy conservative/stable algorithms are presented, for instance, 
in \cite{Friedrich2019,ranocha2019relaxation}.
\subsection{A $p$-nonconforming algorithm}\label{sec:semiEuler}
As for the convection equation, a skew-symmetrically split form of the compressible Euler equations is used to 
construct an entropy conservative/stable algorithm:
\begin{equation}\label{eq:NSCCS1}
\begin{split}
&\Jk\frac{\partial\Qk}{\partial t}+
\sum\limits_{l,m=1}^{3}\frac{1}{2}\left\{
\frac{\partial}{\partial\xil{l}}\left(\Jdxildxm{l}{m}\Fxm{m}\right)
+\Jdxildxm{l}{m}\frac{\partial\Fxm{m}}{\partial\xil{1}}
\right\}\\
&-\frac{1}{2}\sum\limits_{l,m=1}^{3}\Fxm{m}\frac{\partial}{\partial\xil{l}}\left(\Jdxildxm{l}{m}\right)
=0,
\end{split}
\end{equation}
where the last term on the left-hand side is zero as a result of the GCL conditions~\eqref{eq:GCL}.

 The discretization is developed using the same macro element SBP 
operator as in Section~\ref{sec:discretizationconvection}. Thus, the discretization of~\eqref{eq:NSCCS1} over the macro element is given as 
\begin{equation}\label{eq:NSCCS1disc}
\begin{split}
&\Jtilde\frac{\mr{d}\qtilde}{\partial t}+
\sum\limits_{l,m=1}^{3}\Dtildelm{l}{m}
\circ\matFxm{m}{\qtilde}{\qtilde}\tildeone
-\frac{1}{2}\sum\limits_{l,m=1}^{3}\diag\left(\bm{f}_{\xm{m}}\right)\Dtildel{l}\tildebarmatAlm{l}{m}\tildeone
=\bm{0},\\&\qtilde\equiv\left[\qk{\rmL}\Tr,\qk{\rmH}\Tr\right]\Tr,
\end{split}
\end{equation}
where $\bm{f}_{\xm{m}}$ is a vector of vectors constructed by evaluating $\Fxm{m}$ at the mesh nodes. 
That the factor of $\frac{1}{2}$ on the skew-symmetric volume terms disappears as a result of 
using the nonlinear operator, \eg, 
$2\Dxil{l}\circ\matFxm{m}{\qk{\kappa}}{\qk{\kappa}}\ones{\kappa}\approx\frac{\partial\Fxm{m}}{\partial\xil{l}}(\bm{\xi}^{\kappa})$.
Moreover, the flux function matrix, $\matFxm{m}{\qtilde}{\qtilde}$, is constructed using a two point flux 
function, $\fxmsc{m}{\tildeqi{(i)}}{\tildeqi{(i)}}$, that satisfies 
the Tadmor shuffle condition~\cite{Tadmor2003}
\begin{equation}\label{eq:shuffle}
\left(\tildewi{(i)}-\tildewi{(j)}\right)\Tr\fxmsc{m}{\tildeqi{(i)}}{\tildeqi{(i)}}=\tilde{\bm{\psi}}_{\xm{m}}^{(i)}-\tilde{\bm{\psi}}_{\xm{m}}^{(j)}.
\end{equation}
The $\Dtildelm{l}{m}$ operators are constructed as in Section~\eqref{sec:discretizationconvection} by tensoring the contributing matrices with an identity 
matrix $\Imat{5}$ to accommodate the system of $5$ equations. For example,
\begin{equation*}
  \Dxil{1}^{\rmL}\equiv\barDxil{1}\otimes\Imat{5},\quad \barDxil{1}^{\rmL}\equiv\DoneD_{\rmL}\otimes\Imat{\Nl{\rmL}}\otimes\Imat{\Nl{\rmL}}.
\end{equation*}

Nonlinear stability requires that the last set of terms on the left-hand side of~\eqref{eq:NSCCS1disc} be zero.
This requirement leads to discrete GCL conditions which are identical to those obtained for the convection equation 
(with $\Dxil{l}$ replaced with $\barDxil{l}$), i.e., conditions~\eqref{eq:discGCLL} and~\eqref{eq:discGCLH}.

The procedure to demonstrate that the proposed scheme is entropy conservative (i.e., it telescopes to the element
boundaries) follows the 
continuous analysis in a one-to-one fashion (see Section \ref{sec:ContinuousentropyEuler}). To simplify the derivation,
the following matrices are introduced: 
\begin{equation*}
  \begin{split}
  &\Dhatm{m}\equiv\sum\limits_{l=1}^{3}\Dtildelm{l}{m},
\quad\Qhatm{m}\equiv\Mtilde\Dhatm{m},\quad\Ehatm{m}\equiv\Qhatm{m}+\Qhatm{m}\Tr.
  \end{split}
\end{equation*}
Thus, assuming that the discrete GCL conditions are satisfied, \eqref{eq:NSCCS1disc} becomes
\begin{equation}\label{eq:NSCCS1disc2}
\begin{split}
&\Jtilde\frac{\mr{d}\qtilde}{\partial t}+
\sum\limits_{m=1}^{3}\Dhatm{m}\circ\matFxm{m}{\qtilde}{\qtilde}\tildeone=\bm{0}.
\end{split}
\end{equation}
The first step is to contract~\eqref{eq:NSCCS1disc2}, point-wise, with the entropy variables 
(at the continuous level this was the step 
$\fnc{W}\frac{\partial\Fxm{m}}{\partial\xm{m}}=\frac{\partial\fnc{F}_{\xm{m}}}{\partial\xm{m}}$). 
This is accomplished by multiplying~\eqref{eq:NSCCS1disc2} by 
$\Imat{\tilde{N}}\otimes\ones{5}\Tr\diag\left(\wk{\kappa}\right)$, which results in 
\begin{equation}\label{eq:NSCCS1disc3}
\begin{split}
&\Jtilde \, \Imat{\tilde{N}}\otimes\ones{5}\Tr\diag\left(\wk{\kappa}\right) \frac{\mr{d}\qtilde}{\partial t}+
\Imat{\tilde{N}}\otimes\ones{5}\Tr\diag\left(\wk{\kappa}\right)\sum\limits_{m=1}^{3}\Dhatm{m}\circ\matFxm{m}{\qtilde}{\qtilde}\tildeone=\bm{0},
\end{split}
\end{equation}
where $\tilde{N}$ is the total number of nodes in the macro element. Adding and subtracting terms to~\eqref{eq:NSCCS1disc3} gives
\begin{equation}\label{eq:NSCCS1disc4}
\begin{split}
\Jtilde\frac{\mr{d}\stilde}{\partial t}+
&\Imat{\tilde{N}}\otimes\ones{5}\Tr\sum\limits_{m=1}^{3}\left\{
  \frac{1}{2}\diag\left(\wk{\kappa}\right)\Dhatm{m}\circ\matFxm{m}{\qtilde}{\qtilde}\tildeone
+\frac{1}{2}\Dhatm{m}\diag\left(\wk{\kappa}\right)\circ\matFxm{m}{\qtilde}{\qtilde}\tildeone\right.\\
  &\left.+\frac{1}{2}\diag\left(\wk{\kappa}\right)\Dhatm{m}\circ\matFxm{m}{\qtilde}{\qtilde}\tildeone
-\frac{1}{2}\Dhatm{m}\diag\left(\wk{\kappa}\right)\circ\matFxm{m}{\qtilde}{\qtilde}\tildeone
  \right\}=\bm{0}.
\end{split}
\end{equation}
Consider the $i\Th$ term of the spatial terms of~\eqref{eq:NSCCS1disc4} (denoted $\Vol(i)$ where $i=1,\dots,\tilde{N}$), 
\begin{equation}\label{eq:NSCCS1disc5}
  \Vol(i)\equiv\sum\limits_{j=1}^{\tilde{N}}\barDhatm{m}(i,j)\left\{
    \frac{\left(\wtildei{i}+\wtildei{j}\right)}{2}\fxmsc{m}{\qtildei{(i)}}{\qtildei{(j)}}
   +\frac{\left(\wtildei{i}-\wtildei{j}\right)}{2}\fxmsc{m}{\qtildei{(i)}}{\qtildei{(j)}} 
  \right\},
\end{equation}
where $\Dhatm{m}=\barDhatm{m}\otimes\Imat{5}$. Using the Tadmor shuffle condition~\eqref{eq:shuffle} on the second set of terms results in
\begin{equation}\label{eq:NSCCS1disc6}
  \Vol(i)=\sum\limits_{j=1}^{\tilde{N}}\barDhatm{m}(i,j)\left\{
    \frac{\left(\wtildei{i}+\wtildei{j}\right)}{2}\fxmsc{m}{\qtildei{(i)}}{\qtildei{(j)}}
   +\frac{\left(\psixmtildei{m}{i}-\psixmtildei{m}{j}\right)}{2} 
  \right\}.
\end{equation}
Adding and subtracting terms to~\eqref{eq:NSCCS1disc6} gives
\begin{equation}\label{eq:NSCCS1disc7}
  \begin{split}
  \Vol(i)=&\sum\limits_{j=1}^{\tilde{N}}\barDhatm{m}(i,j)\left\{
    \frac{\left(\wtildei{i}+\wtildei{j}\right)}{2}\fxmsc{m}{\qtildei{(i)}}{\qtildei{(j)}}
   -\frac{\left(\psixmtildei{m}{i}+\psixmtildei{m}{j}\right)}{2} 
  \right\}\\
  &+\sum\limits_{j=1}^{\tilde{N}}\barDhatm{m}(i,j)
   \psixmtildei{m}{i}
  \end{split}. 
\end{equation}
The last set of terms can be expanded into matrix form as (\ie, the vector constructed from $\Vol(i)$ for $i=1,..,\tilde{N}$) 
\begin{equation}
  \psixmtilde{m}\Tr\sum\limits_{l=1}^{3}\left\{\barDxil{l}\barmatAlmk{l}{m}{}+\barmatAlmk{l}{m}{}\barDxil{l}\right\}\barones{}=\bm{0},
\end{equation}
where the equality follows from the assumption that the discrete GCL conditions and consistency of the first derivative 
operator, \ie, $\barDxil{l}\barones{}=\bm{0}$. \Eq~\eqref{eq:NSCCS1disc7} can now be recast in terms of 
a new flux function matrix, $\matFxmtilde{m}{\qtilde}{\qtilde}$ constructed from the two-point flux function
\begin{equation*}
  \fxmsctilde{m}{\qtildei{(i)}}{\qtildei{(i)}}\equiv\frac{\left(\wtildei{i}+\wtildei{j}\right)}{2}\fxmsc{m}{\qtildei{(i)}}{\qtildei{(j)}}
   -\frac{\left(\psixmtildei{m}{i}+\psixmtildei{m}{j}\right)}{2}.
\end{equation*}
Note that $\tilde{\bm{f}}_{\xm{n}}^{sc}$ is symmetric and consistent with $\fxm{m}$; thus, \eqref{eq:NSCCS1disc3} reduces 
to
\begin{equation}\label{eq:NSCCS1disc9}
\begin{split}
&\Jtilde\frac{\mr{d}\stilde}{\partial t}+
\sum\limits_{m=1}^{3}\barDhatm{m}\circ\matFxmtilde{m}{\qtilde}{\qtilde}\barones{}=\bm{0},
\end{split}
\end{equation}
where 
\iftoggle{proofs}{ 
(see Appendix~\ref{app:nonlinear} for details on the approximation properties of the nonlinear operators)
}
{
(see Appendix A in~\cite{Fernandez2018_TM} for details on the approximation properties of the nonlinear operators)}
\begin{equation*}
\barDhatm{m}\circ\matFxmtilde{m}{\qtilde}{\qtilde}\barones{}\approx
\frac{1}{2}\sum\limits_{l=1}^{3}\left\{
  \frac{\partial}{\partial\xil{l}}\left(\Jdxildxm{l}{m}\fxm{m}\right)
  +\Jdxildxm{l}{m}\frac{\partial\fxm{m}}{\partial\xil{l}}
\right\}\left(\tilde{\bm{\xi}}\right).
\end{equation*}
Next, \eqref{eq:NSCCS1disc9} is discretely integrated over the domain by left multiplying by $\barones{}\Tr\barM$ 
and rearranging, results in 
\begin{equation}\label{eq:NSCCS1disc10}
\begin{split}
\barones{}\Tr\barM\Jtilde\frac{\mr{d}\stilde}{\partial t}&=
-\sum\limits_{m=1}^{3}\barones{}\Tr\barQhatm{m}\circ\matFxmtilde{m}{\qtilde}{\qtilde}\barones{},\\
&=-\frac{1}{2}\sum\limits_{m=1}^{3}\left\{
  \barones{}\Tr\barQhatm{m}\circ\matFxmtilde{m}{\qtilde}{\qtilde}\barones{}
+\barones{}\Tr\barQhatm{m}\Tr\circ\matFxmtilde{m}{\qtilde}{\qtilde}\barones{}
\right\}\\
&=-\frac{1}{2}\sum\limits_{m=1}^{3}\left\{
  \barones{}\Tr\barQhatm{m}\circ\matFxmtilde{m}{\qtilde}{\qtilde}\barones{}
-\barones{}\Tr\barQhatm{m}\circ\matFxmtilde{m}{\qtilde}{\qtilde}\barones{}
+\barones{}\Tr\barEhatm{m}\circ\matFxmtilde{m}{\qtilde}{\qtilde}\barones{}
\right\}\\
&=-\frac{1}{2}\sum\limits_{m=1}^{3}
\barones{}\Tr\barEhatm{m}\circ\matFxmtilde{m}{\qtilde}{\qtilde}\barones{}\\
&= 
-\frac{1}{2}\sum\limits_{m=1}^{3}\sum\limits_{i,j=1}^{N}\barEhatm{m}(i,j)\left\{
\frac{\left(\wtildei{i}+\wtildei{j}\right)}{2}\fxmsc{m}{\qtildei{(i)}}{\qtildei{(j)}}-\frac{\left(\psixmtildei{m}{i}+\psixmtildei{m}{j}\right)}{2}  
\right\}\\
&=-\frac{1}{2}\sum\limits_{m=1}^{3}\wtilde\Tr\Ehatm{m}\circ\matFxm{m}{\qtilde}{\qtilde}\tildeone+\frac{1}{2}\barones{}\Tr\barEhatm{m}\psitildem{m}.
\end{split}
\end{equation}
Thus,
\begin{equation}\label{eq:NSCCS1disc11}
\begin{split}
\barones{}\Tr\barM\Jtilde\frac{\mr{d}\stilde}{\partial t}=
-\frac{1}{2}\sum\limits_{m=1}^{3}\wtilde\Tr\Ehatm{m}\circ\matFxm{m}{\qtilde}{\qtilde}\tildeone+\frac{1}{2}\barones{}\Tr\barEhatm{m}\psitildem{m}.
\end{split}
\end{equation}
The right-hand side of~\eqref{eq:NSCCS1disc11} contains terms constructed from the $\mat{E}$ matrices.
As a result, these terms can be decomposed into the contributions of the separate surfaces of the element (node-wise). For periodic problems, 
these terms would cancel out with the contributions from face SATs coupling terms leading to entropy 
conservation. For non-periodic problems appropriate SATs need to be constructed so that an entropy inequality or equality is attained (this is an active area of research; see, for example, \cite{Parsani2015,Svard2018,dalcin_2019_wall_bc}). It is important to highlight that 
when nonlinear systems of PDEs are considered, it is not possible to attain
the telescoping property \eqref{eq:NSCCS1disc4} by simply using SBP operators to discretely differentiate 
fluxes.

This section is finished by presenting the discretization for each element separately. 
The semi-discrete forms of the elements $\rmL$ and $\rmR$ are, respectively
\begin{equation}\label{eq:discEL}
  \begin{split}
  &\matJk{\rmL}\frac{\mr{d}\qL}{\mr{d}t}+\sum\limits_{l,m=1}^{3}\left(\Dxil{l}^{\rmL}\matAlmk{l}{m}{\rmL}+\matAlmk{m}{l}{\rmL}\Dxil{l}^{\rmL}\right)
  \circ\matFxm{m}{\qL}{\qL}\ones{\rmL}=\\
 &\left(\M^{\rmL}\right)^{-1}\sum\limits_{m=1}^{3}\left\{
  \left(\eNl{\rmL}\eNl{\rmL}\Tr\PoneD_{\rmL}\otimes\PoneD_{\rmL}\otimes\Imat{5}\right)\matAlmk{1}{m}{\rmL}
  \right\}
  \circ\matFxm{m}{\qL}{\qL}\ones{\rmL}\\
 &\left(\M^{\rmL}\right)^{-1}\sum\limits_{m=1}^{3}\left\{
  \matAlmk{1}{m}{\rmL}\left(\eNl{\rmL}\eNl{\rmL}\Tr\PoneD_{\rmL}\otimes\PoneD_{\rmL}\otimes\Imat{5}\right)\right\}
  \circ\matFxm{m}{\qL}{\qL}\ones{\rmL}\\
  &-\left(\M^{\rmL}\right)^{-1}\sum\limits_{m=1}^{3}\left\{
\left(\eNl{\rmL}\otimes\Imat{\Nl{\rmL}}\otimes\Imat{\Nl{\rmL}}\otimes\Imat{5}\right)
  \matAlmkLH{1}{m}{\rmL}{\Ghat}{red}\left(\PoneD_{\rmL}\IHtoLoneD\otimes\PoneD_{\rmL}\IHtoLoneD\otimes\Imat{5}\right)\right.\\
  &\left.\left(\eonel{\rmH}\Tr\otimes\Imat{\Nl{\rmH}}\otimes\Imat{\Nl{\rmH}}\otimes\Imat{5}\right)\right\}\circ\matFxm{m}{\qL}{\qH}\ones{\rmH}\\
  &-\left(\M^{\rmL}\right)^{-1}\sum\limits_{m=1}^{3}\left\{
\left(\eNl{\rmL}\otimes\Imat{\Nl{\rmL}}\otimes\Imat{\Nl{\rmL}}\otimes\Imat{5}\right)
  \left(\PoneD_{\rmL}\IHtoLoneD\otimes\PoneD_{\rmL}\IHtoLoneD\otimes\Imat{5}\right)\matAlmkLH{1}{m}{\rmH}{\Ghat}{red}\right.\\
  &\left.\left(\eonel{\rmH}\Tr\otimes\Imat{\Nl{\rmH}}\otimes\Imat{\Nl{\rmH}}\otimes\Imat{5}\right)\right\}\circ\matFxm{m}{\qL}{\qH}\ones{\rmH},
  \end{split}
\end{equation}

and

\begin{equation}\label{eq:discEH}
  \begin{split}
  &\matJk{\rmH}\frac{\mr{d}\qH}{\mr{d}t}+\sum\limits_{l,m=1}^{3}\left(\Dxil{l}^{\rmH}\matAlmk{l}{m}{\rmH}+\matAlmk{m}{l}{\rmH}\Dxil{l}^{\rmH}\right)
  \circ\matFxm{m}{\qH}{\qH}\ones{\rmL}=\\
 &-\left(\M^{\rmH}\right)^{-1}\sum\limits_{m=1}^{3}\left\{
  \left(\eonel{\rmH}\eonel{\rmH}\Tr\PoneD_{\rmH}\otimes\PoneD_{\rmH}\otimes\Imat{5}\right)\matAlmk{1}{m}{\rmH}
  \right\}
  \circ\matFxm{m}{\qH}{\qH}\ones{\rmH}\\
 &-\left(\M^{\rmH}\right)^{-1}\sum\limits_{m=1}^{3}\left\{
  \matAlmk{1}{m}{\rmH}\left(\eonel{\rmH}\eonel{\rmH}\Tr\PoneD_{\rmH}\otimes\PoneD_{\rmH}\otimes\Imat{5}\right)\right\}
  \circ\matFxm{m}{\qH}{\qH}\ones{\rmH}\\
  &+\left(\M^{\rmH}\right)^{-1}\sum\limits_{m=1}^{3}\left\{
\left(\eonel{\rmH}\otimes\Imat{\Nl{\rmH}}\otimes\Imat{\Nl{\rmH}}\otimes\Imat{5}\right)
  \matAlmkLH{1}{m}{\rmH}{\Ghat}{red}\left(\PoneD_{\rmH}\ILtoHoneD\otimes\PoneD_{\rmH}\ILtoHoneD\otimes\Imat{5}\right)\right.\\
  &\left.\left(\eNl{\rmL}\Tr\otimes\Imat{\Nl{\rmL}}\otimes\Imat{\Nl{\rmL}}\otimes\Imat{5}\right)\right\}\circ\matFxm{m}{\qH}{\qL}\ones{\rmL}\\
  &+\left(\M^{\rmH}\right)^{-1}\sum\limits_{m=1}^{3}\left\{
\left(\eonel{\rmH}\otimes\Imat{\Nl{\rmH}}\otimes\Imat{\Nl{\rmH}}\otimes\Imat{5}\right)
  \left(\PoneD_{\rmH}\ILtoHoneD\otimes\PoneD_{\rmH}\ILtoHoneD\otimes\Imat{5}\right)\matAlmkLH{1}{m}{\rmL}{\Ghat}{red}\right.\\
  &\left.\left(\eNl{\rmL}\Tr\otimes\Imat{\Nl{\rmL}}\otimes\Imat{\Nl{\rmL}}\otimes\Imat{5}\right)\right\}\circ\matFxm{m}{\qH}{\qL}\ones{\rmL}.
  \end{split}
\end{equation}

 The algorithm presented above leads to an entropy conservative discretization (modulo what boundary conditions are imposed). 
 However, the main interest is in entropy stable algorithms and the approach use herein to achieve this is to augment the 
 entropy conservative scheme with an appropriate interface dissipation. How this is accomplished is discussed in 
 the next section. 
\section{Inviscid interface dissipation and boundary SATs}\label{sec:dissipation}
To make the inviscid entropy conservative scheme entropy stable, an interface dissipation needs to be added. 
Herein, as in the conforming algorithms \cite{Carpenter2014,Parsani2015,Carpenter2016,Parsani2016}, 
the numerical dissipation is motivated by the up-winding used in a Roe approximate Riemann solver which has the form
\begin{equation}\label{eq:numbflux}
\bfnc{F}^{*}=\frac{\bfnc{F}^{+}+\bfnc{F}^{-}}{2}-\frac{1}{2}\Y\left|\mat{\Lambda}\right|\Y^{-1}\left(\bfnc{Q}^{+}-\bfnc{Q}^{-}\right),
\end{equation}
where $+$ and $-$ refer to quantities evaluated on the side of the interface in the positive and negative 
face normal directions, respectively (see Figure~\ref{fig:non}).  

The original Roe flux: $\bfnc{F}^{*}$ is composed of an inviscid flux average:
$(\bfnc{F}^{+}+\bfnc{F}^{-})/2$ and a dissipation term.  
The flux average term is not in general entropy conservative, and is replaced with an entropy conservative 
two-point flux, \eg, Chandrashekar~\cite{Chandrashekar2013} or Ismail and Roe~\cite{Ismail2009affordable}.
%
The remaining dissipation term, is reformulated in quadratic form to facilitate an entropy stability proof. 
This step is accomplished by using the flux Jacobian with respect to the 
entropy variables, $\bfnc{W}$, rather than the conservative variables, $\bfnc{Q}$. 
A unique scaling of the eigenvectors of the conservative variable flux Jacobian facilitates the 
reformulation~\cite{Merriam1989}.
\begin{equation*}
\frac{\partial\bfnc{Q}}{\partial\bfnc{W}}=\Y\Y\Tr \quad ; \quad
\frac{\partial\bfnc{F}_{\xil{l}}}{\partial\bfnc{W}}
\:=\: \frac{\partial\bfnc{F}_{\xil{l}}}{\partial\bfnc{Q}}\frac{\partial\bfnc{Q}}{\bfnc{W}}
\:=\: \mat{Y}\mat{\Lambda}_{\xil{l}}\mat{Y}^{-1}\:\:\mat{Y} \mat{Y}\Tr
\:=\: \mat{Y}\mat{\Lambda}_{\xil{l}}\mat{Y}\Tr
\end{equation*}
A full description of the various required matrices appears in Fisher~\cite{Fisher2012phd}.  

The interface dissipation term:
$\mat{Y}|\mat{\Lambda}|\mat{Y}^{-1}\left(\bfnc{Q}^{+}-\bfnc{Q}^{-}\right)$ is replaced with the equivalent 
quadratic dissipation term: $\mat{Y}|\mat{\Lambda}|\mat{Y}\Tr\left(\bfnc{W}^{+}-\bfnc{W}^{-}\right)$. 
The final form of the interface flux is
\begin{equation}\label{eq:numbfluxentropy}
\bfnc{F}^{*}=\fxmsc{m}{\bfnc{W}^{+}}{\bfnc{W}^{-}}-\frac{1}{2}\mat{Y}\left|\mat{\Lambda}\right|\mat{Y}\Tr\left(\bfnc{W}^{+}-\bfnc{W}^{-}\right).
\end{equation}
and leads to an entropy stable scheme.
The extension to the curvilinear case follows immediately by constructing all three computational fluxes followed by the dot product with the outward facing normal.

Therefore, the dissipation on element $\rmL$ is given as
\begin{equation}\label{eq:dissmodiL}
\begin{split}
\dissL\equiv&
-\frac{1}{2}\left(\M^{\rmL}\right)^{-1}
\RL\Tr\Porthol{1}^{\rmL}\dfdwL\left(\RL\wk{\rmL}-\IHtoL\RH\wk{\rmH}\right)\\
&-\frac{1}{2}\left(\M^{\rmL}\right)^{-1}\RL\Tr\Porthol{1}^{\rmL}\IHtoL\dfdwH\left(\ILtoH\RL\wk{\rmL}-\RH\wk{\rmH}\right),
\end{split}
\end{equation}
and on element $\rmH$ takes the following form
\begin{equation}\label{eq:dissmodiH}
\begin{split}
\dissH\equiv&
-\frac{1}{2}\left(\M^{\rmH}\right)^{-1}\RH\Tr\Porthol{1}^{\rmH}\dfdwH\left(\RH\wk{\rmH}-\ILtoH\RL\wk{\rmL}\right)\\
&-\frac{1}{2}\left(\M^{\rmH}\right)^{-1}\RH\Tr\Porthol{1}^{\rmH}\ILtoH\dfdwL\left(\IHtoL\RH\wk{\rmH}-\RL\wk{\rmL}\right),
\end{split}
\end{equation}
where $\dfdwL$ is constructed from the Roe average of the states $\wk{\rmL}$ and $\IHtoL\wk{\rmH}$, and   
$\dfdwH$ is constructed from the Roe average of the states $\wk{\rmH}$ and $\ILtoH\wk{\rmL}$. Furthermore, 
the various operators which appear in \eqref{eq:dissmodiL} and \eqref{eq:dissmodiH} are defined as follows:
\begin{equation*}
  \begin{split}
  &\RL\equiv\eNl{\rmL}\Tr\otimes\Imat{\Nl{\rmL}}\otimes\Imat{\Nl{\rmL}}\otimes\Imat{5},\quad
  \RH\equiv\eonel{\rmH}\Tr\otimes\Imat{\Nl{\rmH}}\otimes\Imat{\Nl{\rmH}}\otimes\Imat{5},\\
  &\Porthol{l}^{\rmL}\equiv\PoneD_{\rmL}\otimes\PoneD_{\rmL}\otimes\Imat{5},\quad
\Porthol{l}^{\rmH}\equiv\PoneD_{\rmH}\otimes\PoneD_{\rmH}\otimes\Imat{5},\\
&\IHtoL\equiv\IHtoLoneD\otimes\IHtoLoneD\otimes\Imat{5},\quad
\ILtoH\equiv\ILtoHoneD\otimes\ILtoHoneD\otimes\Imat{5}.
  \end{split}
\end{equation*}
HERE1234
All the theorems regarding the accuracy and the entropy stability are presented 
\iftoggle{proofs}{
(element-wise conservation is discussed in Appendix~\ref{sec:gen_element_wise_conservation} while it is straightforward 
to see that free-stream preservation is attained if the discrete GCL are satisfied).
}{(element-wise conservation is discussed in Appendix E in~\cite{Fernandez2018_TM} while it is straightforward 
to see that free-stream preservation is attained if the discrete GCL are satisfied).}
\begin{thrm}
The dissipation term $\dissL$ is a term of order $\pL+d-1$ and the dissipation term 
$\dissH$ is a term of order $\pL+d-1$.
\end{thrm}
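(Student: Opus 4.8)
The plan is to read ``a term of order $s$'' in the standard summation-by-parts sense: upon substituting the nodal values of an arbitrary smooth exact solution $\bfnc{W}$ into $\dissL$ (respectively $\dissH$) the result vanishes like $h^{s}$ under mesh refinement, so it is enough to exhibit the degree of polynomial exactness and then collect the scalings of the surviving operators. First I would split $\dissL$ into its two summands, each of the common form
\begin{equation*}
\left(\M^{\rmL}\right)^{-1}\RL\Tr\Porthol{1}^{\rmL}\,\mat{B}\,\bm{j},
\end{equation*}
in which $\mat{B}$ is one of the uniformly bounded Roe-averaged Jacobian magnitudes $\dfdwL$, $\dfdwH$ (the latter possibly pre-multiplied by $\IHtoL$) and $\bm{j}$ is a jump vector, namely $\RL\wk{\rmL}-\IHtoL\RH\wk{\rmH}$ in the first summand and $\ILtoH\RL\wk{\rmL}-\RH\wk{\rmH}$ in the second; $\dissH$ has the same decomposition with $\rmL$ and $\rmH$ interchanged.

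The substantive step is to show that every jump vector annihilates polynomials of total degree at most $\pL$ and is therefore a term of order $\pL$. For $\bm{j}=\RL\wk{\rmL}-\IHtoL\RH\wk{\rmH}$ this rests on three ingredients: the restriction operators $\RL$, $\RH$ return the exact face traces of polynomials in the respective element spaces; the one-dimensional interpolation operators $\IHtoLoneD$, $\ILtoHoneD$ are exact for polynomials of degree at most $\pL\ (\le\pH)$; and this exactness is inherited by the tensor-product operators $\IHtoL=\IHtoLoneD\otimes\IHtoLoneD\otimes\Imat{5}$ and $\ILtoH$. Hence, for $\bfnc{W}$ of degree at most $\pL$, the vectors $\RL\wk{\rmL}$ and $\RH\wk{\rmH}$ are the samples of one and the same degree-$\pL$ face trace on the $\rmL$- and $\rmH$-node sets, $\IHtoL$ maps the $\rmH$-node samples back onto the $\rmL$-node samples exactly, and $\bm{j}=\bm{0}$; the argument for $\ILtoH\RL\wk{\rmL}-\RH\wk{\rmH}$ and for the two jump vectors of $\dissH$ is identical. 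The cap $\min(\pL,\pH)=\pL$ on the interpolation exactness is exactly what prevents $\dissH$ from being a term of order $\pH+d-1$.

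It then remains to account for the remaining factors: $\mat{B}$ is $O(1)$ by uniform boundedness of the (Roe-averaged) flux Jacobians on compact state sets and of $\IHtoL$, $\ILtoH$; the inverse mass matrix $\left(\M^{\rmL}\right)^{-1}$ and the restriction transpose $\RL\Tr$ enter as in any summation-by-parts simultaneous approximation term; and $\Porthol{1}^{\rmL}$, being the $(d-1)$-dimensional surface-norm matrix assembled from $d-1$ copies of a one-dimensional norm, contributes a further $d-1$ to the order. Combining the order-$\pL$ estimate for $\bm{j}$ with this surface scaling yields order $\pL+d-1$ for $\dissL$, and the mirror-image computation gives the same order for $\dissH$; the curvilinear metric terms enter only through the bounded factor $\mat{B}$ and the norm matrices, so they do not change the count. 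I expect the main obstacle to be the second paragraph: proving rigorously that the \emph{composite} nonconforming operators $\IHtoL\RH$ and $\ILtoH\RL$ reproduce degree-$\pL$ face traces even though they act between mismatched node sets, and confirming that neither $\Porthol{1}^{\rmL}$ nor $\left(\M^{\rmL}\right)^{-1}\RL\Tr$ degrades that consistency; the remaining power-of-$h$ bookkeeping is routine.
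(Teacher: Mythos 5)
Your argument is essentially the paper's: the proof given there is a single sentence stating that the result ``follows from the accuracy of the interpolation operators'' (with all details deferred to the companion technical report), and your central step---polynomial exactness of $\IHtoLoneD$ and $\ILtoHoneD$ up to degree $\min(\pL,\pH)=\pL$ forcing the jump vectors to annihilate degree-$\pL$ face traces, which caps both $\dissL$ and $\dissH$ at order $\pL+d-1$---is precisely that observation made explicit. The paper provides no detail against which to check your power-of-$h$ bookkeeping for the $d-1$ contribution (which is the shakiest part of your sketch), but the approach is the same.
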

\begin{proof}
This follows from the accuracy of the interpolation operators.
\end{proof}
\begin{thrm}
The dissipation terms~\eqref{eq:dissmodiL} and~\eqref{eq:dissmodiH} lead to an entropy stable scheme.
\end{thrm}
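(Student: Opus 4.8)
The plan is to estimate the rate of change of the total discrete entropy of the coupled $\Omega_{\rmL}$--$\Omega_{\rmH}$ problem. Following the standard SBP--SAT argument, I would contract the semi-discrete residual on $\Omega_{\rmL}$ with $\wk{\rmL}\Tr\M^{\rmL}$, the residual on $\Omega_{\rmH}$ with $\wk{\rmH}\Tr\M^{\rmH}$, and add; node-wise, the entropy variables turn the conservative-variable time derivative into the time derivative of the entropy. By the entropy-conservation property of the baseline (dissipation-free) discretization established earlier --- the volume terms telescope and the interface SATs built from the symmetric entropy-conservative two-point flux appear in the two neighbouring elements with opposite sign and cancel --- the sum reduces to the admissible physical-boundary contributions plus the single extra term coming from \eqref{eq:dissmodiL}--\eqref{eq:dissmodiH},
\[
\Theta \;=\; \wk{\rmL}\Tr\M^{\rmL}\dissL \;+\; \wk{\rmH}\Tr\M^{\rmH}\dissH .
\]
It therefore suffices to show $\Theta\le 0$.

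Next I would make $\Theta$ explicit. The factors $\M^{\rmL}$ and $\M^{\rmH}$ cancel the leading inverses in \eqref{eq:dissmodiL}--\eqref{eq:dissmodiH}; writing $a:=\RL\wk{\rmL}$ and $b:=\RH\wk{\rmH}$ for the two surface traces and using $\wk{\rmL}\Tr\RL\Tr=a\Tr$ and $\wk{\rmH}\Tr\RH\Tr=b\Tr$, one gets
\begin{align*}
\Theta
&= -\tfrac12\, a\Tr\Porthol{1}^{\rmL}\dfdwL\bigl(a-\IHtoL b\bigr)\\
&\quad -\tfrac12\, a\Tr\Porthol{1}^{\rmL}\IHtoL\dfdwH\bigl(\ILtoH a-b\bigr)\\
&\quad -\tfrac12\, b\Tr\BH\dfdwH\bigl(b-\ILtoH a\bigr)\\
&\quad -\tfrac12\, b\Tr\BH\ILtoH\dfdwL\bigl(\IHtoL b-a\bigr).
\end{align*}
Two structural facts then finish the argument. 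First, at each surface node $\dfdwL$ and $\dfdwH$ have the form $\Y\left|\mat{\Lambda}\right|\Y\Tr$, hence are symmetric positive semidefinite and block-diagonal in the surface-node index, while the diagonal surface-norm matrices $\Porthol{1}^{\rmL}$ and $\BH$ act as a positive scalar multiple of the identity on the five-component block at each node; consequently the norms commute with the Roe matrices, and $\Porthol{1}^{\rmL}\dfdwL$ and $\BH\dfdwH$ are again symmetric positive semidefinite. Second, the SBP-preserving interpolation operators are norm-compatible, $\PoneD_{\rmL}\IHtoLoneD=(\ILtoHoneD)\Tr\PoneD_{\rmH}$, which, upon taking tensor products, gives $\Porthol{1}^{\rmL}\IHtoL=(\ILtoH)\Tr\BH$, equivalently $\BH\ILtoH=(\IHtoL)\Tr\Porthol{1}^{\rmL}$.

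Using the second identity to pull the interpolation out of the two mixed terms --- $b\Tr\BH\ILtoH\dfdwL=(\IHtoL b)\Tr\Porthol{1}^{\rmL}\dfdwL$ and $a\Tr\Porthol{1}^{\rmL}\IHtoL\dfdwH=(\ILtoH a)\Tr\BH\dfdwH$ --- the four terms regroup into one block carrying $\Porthol{1}^{\rmL}\dfdwL$ acting on the pair $(a,\,\IHtoL b)$ and one block carrying $\BH\dfdwH$ acting on $(\ILtoH a,\,b)$. Each block has the shape $u\Tr\mat{M}\,(u-v)+v\Tr\mat{M}\,(v-u)$ with $\mat{M}$ symmetric, which equals the perfect square $(u-v)\Tr\mat{M}\,(u-v)$; hence
\begin{align*}
\Theta
&= -\tfrac12\bigl(a-\IHtoL b\bigr)\Tr\Porthol{1}^{\rmL}\dfdwL\bigl(a-\IHtoL b\bigr)\\
&\quad -\tfrac12\bigl(\ILtoH a-b\bigr)\Tr\BH\dfdwH\bigl(\ILtoH a-b\bigr)\;\le\;0
\end{align*}
by the first fact. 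Together with the entropy conservation of the baseline discretization and the admissible physical-boundary SATs, this establishes entropy stability.

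I expect the only genuine obstacle to be the second structural fact: the mixed terms generated by $\IHtoL$ and $\ILtoH$ cancel against the non-mixed terms \emph{only} because the interpolation operators are norm-compatible and the diagonal surface norms commute with the block Roe matrices --- which is precisely why the dissipation on each element must be split into the two contributions appearing in \eqref{eq:dissmodiL}--\eqref{eq:dissmodiH} rather than written as a single jump penalty. The rest is bookkeeping; in particular, the argument never uses the specific Roe-averaged states defining $\dfdwL$ and $\dfdwH$, only their symmetry and positive semidefiniteness.
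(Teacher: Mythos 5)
Your proposal is correct and follows essentially the same route as the paper: contract with $\wk{\rmL}\Tr\M^{\rmL}$ and $\wk{\rmH}\Tr\M^{\rmH}$, add, invoke the norm-compatibility of the interpolation operators (the paper's identity $\IHtoL=\left(\M^{\rmL}\right)^{-1}\ILtoH\Tr\M^{\rmH}$ is exactly your $\Porthol{1}^{\rmL}\IHtoL=\ILtoH\Tr\BH$), and regroup the four terms into two negative semi-definite quadratic forms. Your write-up merely makes explicit what the paper leaves implicit --- the $u\Tr\mat{M}(u-v)+v\Tr\mat{M}(v-u)=(u-v)\Tr\mat{M}(u-v)$ identity and the symmetry/positive semi-definiteness of $\Porthol{1}^{\rmL}\dfdwL$ and $\BH\dfdwH$.
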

\begin{proof}
Contracting the global dissipation operator with the entropy variables results in contractions of 
the interface dissipation terms $\dissL$ and $\dissH$ by
$\wk{\rmL}\Tr\mat{P}^{\rmL}$ and $\wk{\rmH}\Tr\mat{P}^{\rmH}$, respectively.
Summing the interface contributions $\wk{\rmL}\Tr\M^{\rmL}\dissL$ and $\wk{\rmH}\Tr\M^{\rmH}\dissH$, 
using the property $\IHtoL=\left(\M^{\rmL}\right)^{-1}\ILtoH\Tr\M^{\rmH}$ and 
rearranging terms, results in
\begin{equation*}
\begin{split}
&\wk{\rmL}\Tr\M^{\rmL}\dissL+\wk{\rmH}\Tr\M^{\rmH}\dissH =\\ 
&-\frac{1}{2}\left(\RL\wk{\rmL}-\IHtoL\RH\wk{\rmH}\right)\Tr\Porthol{1}^{\rmL}\dfdwL
\left(\RL\wk{rmL}-\IHtoL\RH\wk{rmH}\right)\\
&-\frac{1}{2}\left(\RH\wk{rmH}-\ILtoH\RL\wk{\rmL}\right)\Tr\Porthol{1}^{\rmH}\dfdwH
\left(\RH\wk{\rmH}-\ILtoH\RH\wk{\rmL}\right),
\end{split}
\end{equation*}
which is a negative semi-definite term. Therefore, the added interface terms are entropy dissipative.
\end{proof}

In Section~\ref{sec:num}, two problems are used to characterize the nonconforming algorithms: 1) the propagation of an isentropic vortex 
and 2) the inviscid Taylor-Green vortex problem. For all of them, the boundary conditions are weakly imposed by reusing the interface SAT mechanics (see, for instance, \cite{Parsani2015,dalcin_2019_wall_bc}).
\section{Numerical experiments}\label{sec:num}
This section presents numerical evidence that demonstrates that the proposed $p$ nonconforming algorithm retains 
the accuracy and robustness of the spatial conforming discretization 
reported in \cite{Carpenter2014,Parsani2015,Carpenter2016,Parsani2016}. 

Herein, the conforming \cite{Carpenter2014,Parsani2015,Carpenter2016,Parsani2016} and $p$-adaptive solver for unstructured grids 
developed at the Extreme Computing Research Center (ECRC) at KAUST is used to perform numerical experiments. This parallel solver is built on 
top of the Portable and Extensible Toolkit for Scientific computing (PETSc)~\cite{petsc-user-ref}, its mesh topology 
abstraction (DMPLEX)~\cite{KnepleyKarpeev09} and scalable ordinary differential equation (ODE)/differential algebraic equations (DAE) solver library~\cite{abhyankar2018petsc}. 
The systems of ordinary differential equations arising from the spatial
discretizations are integrated using the fourth-order
accurate Dormand--Prince method \cite{dormand_rk_1980} endowed with an adaptive time stepping technique based on digital signal processing \cite{Soderlind2003,Soderlind2006}. To make the temporal error negligible, a tolerance of $10^{-8}$ is always used for the time-step adaptivity. The two-point entropy consistent flux
of Chandrashekar~\cite{Chandrashekar2013} is used for all the test cases.

The errors are computed using volume scaled (for the $L^{1}$ and $L^{2}$ norms) discrete norms as follows:
\begin{equation*}
\begin{split}
  &\|\bm{u}\|_{L^{1}}=\Omega_{c}^{-1}\sum\limits_{\kappa=1}^{K}\ones{\kappa}\Tr\M^{\kappa}\matJk{\kappa}\textrm{abs}\left(\bm{u}_{\kappa}\right),\,
\|\bm{u}\|_{L^{2}}^{2}=\Omega_{c}^{-1}\sum\limits_{\kappa=1}^{K}\bm{u}_{\kappa}\M^{\kappa}\matJk{\kappa}\bm{u}_{\kappa},\\
&\|\bm{u}\|_{L^{\infty}}=\max\limits_{\kappa=1\dots K}\textrm{abs}\left(\bm{u}_{\kappa}\right),
\end{split}
\end{equation*}
where $\Omega_{c}$ indicates the volume of $\Omega$ computed as 
$\Omega_{c}\equiv\sum\limits_{\kappa=1}^{K}\ones{\kappa}\Tr\M^{\kappa}\matJk{\kappa}\ones{\kappa}$.

\subsection{Isentropic Euler vortex propagation}\label{subsec:iv}
For verification and characterization of the inviscid components of the algorithm, 
the propagation of an isentropic vortex is used. This benchmark problem
has an analytical solution which is given by
\begin{equation*}
\begin{split}
& \fnc{G}\left(\xm{1},\xm{2},\xm{3},t\right) = 1
-\left\{
\left[
\left(\xm{1}-x_{1,0}\right)
-U_{\infty}\cos\left(\alpha\right)t
\right]^{2}
+
\left[
\left(\xm{2}-x_{2,0}\right)
-U_{\infty}\sin\left(\alpha\right)t
\right]^{2}
\right\},\\
&\rho = T^{\frac{1}{\gamma-1}},\\
  &\Um{1} = U_{\infty}\cos(\alpha)-\epsilon_{\nu}
\frac{\left(\xm{2}-x_{2,0}\right)-U_{\infty}\sin\left(\alpha\right)t}{2\pi}
\exp\left(\frac{\fnc{G}}{2}\right),\\
  &\Um{2} = U_{\infty}\sin(\alpha)-\epsilon_{\nu}
\frac{\left(\xm{1}-x_{1,0}\right)-U_{\infty}\cos\left(\alpha\right)t}{2\pi}
\exp\left(\frac{\fnc{G}}{2}\right),\\
  &\Um{3} = 0,\\
&T = \left[1-\epsilon_{\nu}^{2}M_{\infty}^{2}\frac{\gamma-1}{8\pi^{2}}\exp\left(\fnc{G}\right)\right],
\end{split}
\end{equation*}
where $U_{\infty}$, $M_{\infty}$, and $\left(x_{1,0},x_{2,0},x_{3,0}\right)$ are the modulus of the 
freestream velocity, the freestream Mach number, and the vortex center, respectively. In this paper, 
the following values are used: $U_{\infty}=M_{\infty} c_{\infty}$, $\epsilon_{\nu}=5$, $M_{\infty}=0.5$, 
$\gamma=1.4$, $\alpha=45^{\degree}$, and $\left(x_{1,0},x_{2,0},x_{3,0}\right)=\left(0,0,0\right)$.
The computational domain is $\xm{1}\in[-5,5]$, $\xm{2}\in[-5,5]$, $\xm{3}\in[-5,5]$, and $t\in[0,2]$.
The analytical solution is used to furnish data for the initial condition.

First, results aimed at validating the entropy conservation properties of 
the interior domain SBP-SAT algorithm are reported. Thus, periodic boundary conditions are used on all six faces of the computational domain.
Furthermore, all the dissipation terms used for the 
interface coupling are turned off.
The discrete integral over the computational domain of the time rate of change of the entropy function,
$\displaystyle\int_{\Omega}\frac{\partial\fnc{S}}{\partial t}\mr{d}\Omega$
in \Eq~\eqref{eq:NSCe3}, is monitored at every time step.

The computational
domain is subdivided using ten hexahedrons in each coordinate direction and  
the solution polynomial degree in 
each element is assigned a random integer chosen uniformly from the set $p_s=\{2,3,4,5\}$ (\ie, each member in the set has 
an equal probability of being chosen). To test the conservation of entropy and therefore the 
freestream condition when curved element interfaces are used, the LGL collocation 
point coordinates at element interfaces are constructed\footnote{In a general setting, element interfaces can also 
be boundary element interfaces.} as follows:
\begin{itemize}
\item Construct a mesh by describing the element interfaces with a second-order polynomial
  representation. Note that many mesh generators
    use a uniform distribution of points (or nodes) to construct such a geometrical representation.
\item Perturb the nodes that are used to define the second-order polynomial
  approximation of the element interfaces as follows:
\begin{equation*}
\begin{split}
&x_1 = x_{1,*} + \frac{1}{15} L_1 \cos \left(   a \right) \cos \left( 3 b \right) \sin \left( 4 c \right), \,
  x_2 = x_{2,*} + \frac{1}{15} L_2 \sin \left( 4 a \right) \cos \left(   b \right) \cos \left( 3 c \right),\\
&x_3 = x_{3,*} + \frac{1}{15} L_3 \cos \left( 3 a \right) \sin \left( 4 b \right) \cos \left(   c \right),
\end{split}
\end{equation*}
where
\begin{equation*}
\begin{split}
  &a = \frac{\pi}{L_1} \left(x_{1,*}-\frac{x_{1,H}+x_{1,L}}{2}\right), 
  b = \frac{\pi}{L_2} \left(x_{2,*}-\frac{x_{2,H}+x_{2,L}}{2}\right),\\
  &c = \frac{\pi}{L_3} \left(x_{3,*}-\frac{x_{3,H}+x_{3,L}}{2}\right).
\end{split}
\end{equation*}
The lengths $L_1$, $L_2$ and $L_3$ are the dimensions of the computational domain in the three
coordinate directions and the sub-script $*$ indicates the unperturbed coordinates of
the nodes. This step yields a ``perturbed'' second-order interface polynomial representation.
\item Compute the coordinate of the LGL points at the element interface
by evaluating the ``perturbed'' second-order polynomial at the tensor-product LGL points used 
    to define the cell solution polynomial of order $p_s$.
\end{itemize}

Figure \ref{fig:iv_mesh_cut} shows a cut of the mesh
and the polynomial order distribution used for this first test case. The propagation
of the vortex is simulated for two time units.
\begin{figure}[!tbp]
  \centering
  \begin{minipage}[b]{0.45\textwidth}
    \includegraphics[width=\textwidth]{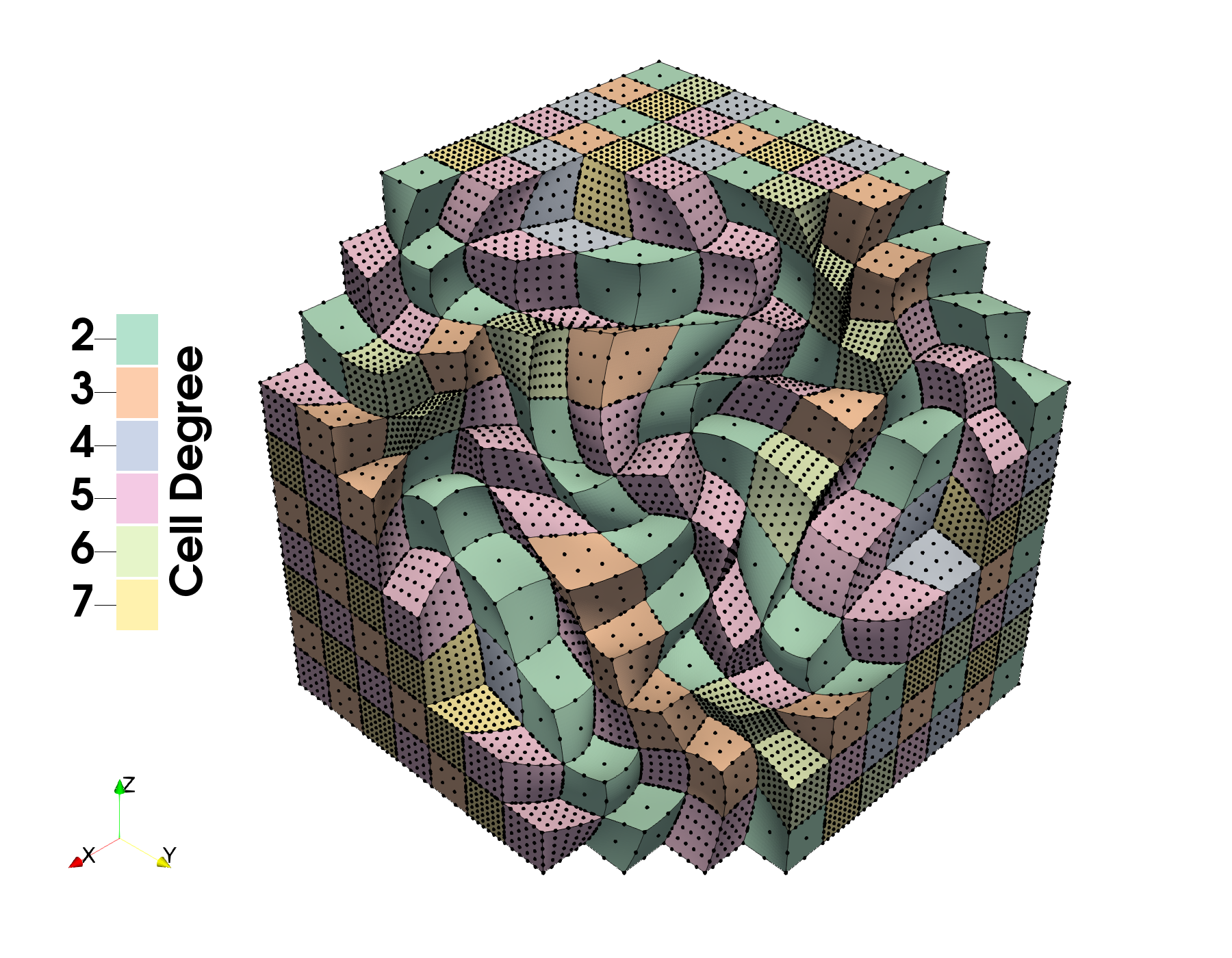}
    \caption{Isentropic vortex: mesh cut and polynomial order distribution.}\label{fig:iv_mesh_cut}
  \end{minipage}
  \hfill
  \begin{minipage}[b]{0.45\textwidth}
    \includegraphics[width=\textwidth]{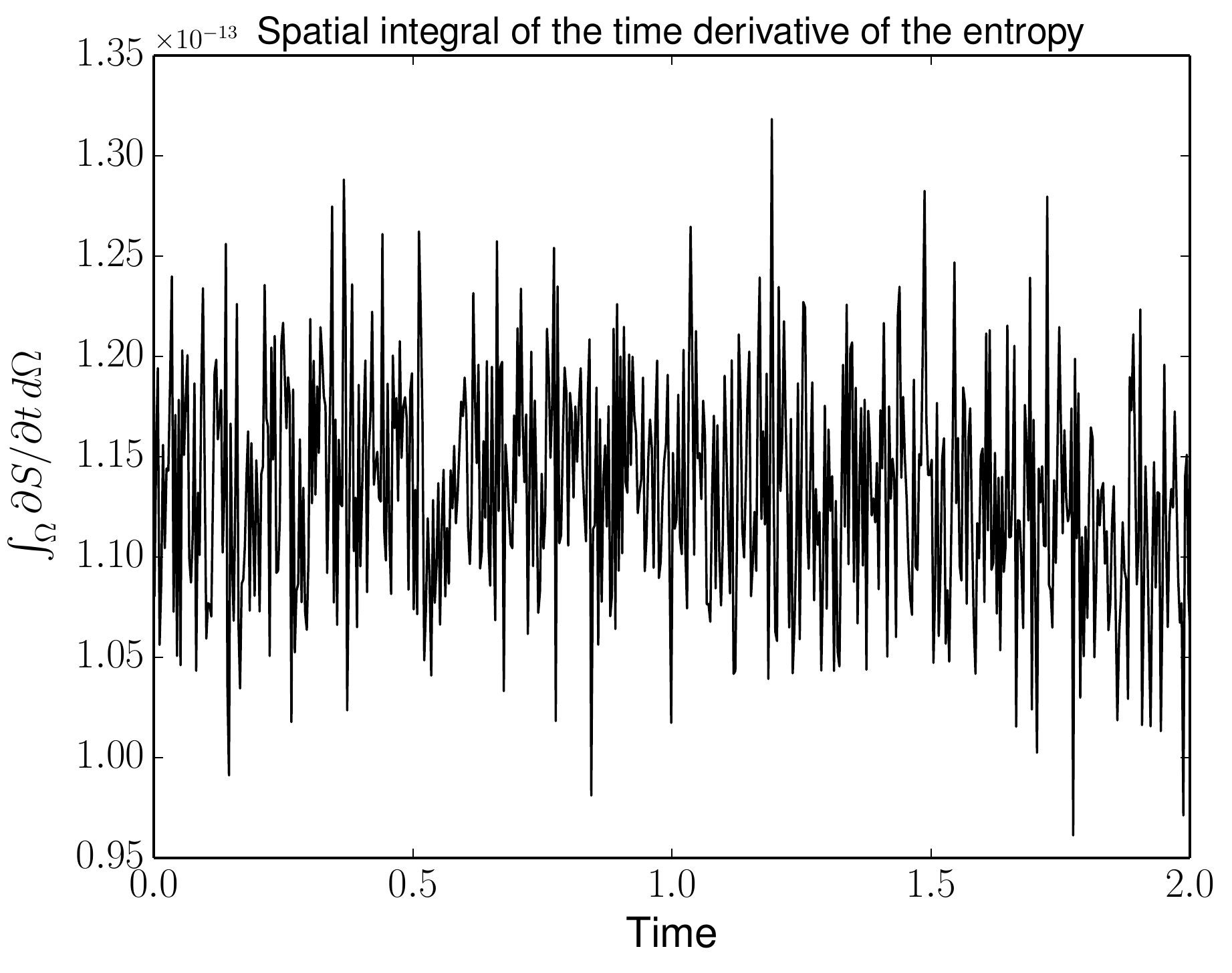}
    \caption{Isentropic vortex: time rate of change of the entropy function.}\label{fig:iv_dsdt}
  \end{minipage}
\end{figure}
Figure \ref{fig:iv_dsdt} plots the integral of the time derivative of the entropy function. It is 
seen that the global variation of the time rate of the of $\fnc{S}$ is practically
zero. This implies that the spatially nonconforming algorithm is entropy conservative.

Second, a grid convergence study is performed to investigate the order of convergence
of the nonconforming algorithm.
The base grid (i.e., the coarsest grid) is constructed as follows:
\begin{itemize}
\item Divide the computational domain with four hexahedral elements in each coordinate direction.
\item Assign the solution polynomial degree in each element to a random integer chosen uniformly from the set $\{p_s, p_s+1\}$.
\item Approximate with a $p_s$th-order accurate polynomial the element interfaces.
\item Construct the perturbed elements and their corresponding LGL points as described previously. 
\end{itemize}
From the base grid, which is similar to the one depicted in Figure \ref{fig:iv_mesh_cut}, 
a sequence of nested grids is then generated to perform the convergence study.
The results are reported in Tables \ref{tab:iv_p1p2} through \ref{tab:iv_p6p7} for the error on the
density. The number
listed in the first column denoted by``Grid'' indicates the number of hexahedrons in each coordinate direction.
\begin{table}[htbp!]
\vspace{0.5cm}
\begin{center}
\scriptsize
\begin{tabular}{||l||c|c|c|c|c|c||c|c|c|c|c|c||}
\hline \hline
        & \multicolumn{6}{c||}{Conforming, $p=1$} & \multicolumn{6}{c||}{Nonconforming, $p=1$ and $p=2$}\\ \hline
 Grid & $L^1$    & Rate  & $L^2$    & Rate  & $L^{\infty}$ & Rate  & $L^1$    & Rate  & $L^2$    & Rate  & $L^{\infty}$ & Rate  \\ \hline
 4    & 2.74E-02 & -     & 1.32E-03 & -     &  1.55E-01    & -     & 1.87E-02 & -     & 9.48E-04 & -     & 1.20E-01     & -     \\ \hline
 8    & 1.14E-02 & -1.26 & 6.61E-04 & -1.00 &  1.12E-01    & -0.47 & 1.02E-02 & -0.87 & 5.87E-04 & -0.69 & 1.23E-01     & +0.04 \\ \hline
 16   & 5.13E-03 & -1.16 & 3.31E-04 & -1.00 &  7.29E-02    & -0.62 & 4.55E-03 & -1.17 & 2.99E-04 & -0.97 & 7.06E-02     & -0.80 \\ \hline
 32   & 1.70E-03 & -1.59 & 1.15E-04 & -1.52 &  3.01E-02    & -1.28 & 1.52E-03 & -1.58 & 1.07E-04 & -1.49 & 3.97E-02     & -0.83 \\ \hline
 64   & 4.76E-04 & -1.84 & 3.24E-05 & -1.83 &  8.53E-03    & -1.82 & 4.36E-04 & -1.81 & 3.15E-05 & -1.76 & 2.15E-02     & -0.88 \\ \hline
 128  & 1.23E-04 & -1.96 & 8.33E-06 & -1.96 &  2.13E-03    & -2.00 & 1.14E-04 & -1.93 & 8.72E-06 & -1.85 & 1.05E-02     & -1.03 \\ \hline
 256  & 3.08E-05 & -1.99 & 2.09E-06 & -1.99 &  5.24E-04    & -2.02 & 2.91E-05 & -1.98 & 2.46E-06 & -1.83 & 5.10E-03     & -1.05 \\ \hline
 512  & 7.68E-06 & -2.00 & 5.23E-07 & -2.00 &  1.30E-04    & -2.01 & 7.23E-06 & -2.01 & 6.17E-07 & -2.00 & 2.26E-03     & -1.17 \\ \hline
\end{tabular}
\caption{Convergence study of the isentropic vortex propagation: $p=1$ with $p=2$; density error.}.
\label{tab:iv_p1p2}
\end{center}
\end{table}

\begin{table}[htbp!]
\vspace{0.5cm}
\begin{center}
\scriptsize
\begin{tabular}{||l||c|c|c|c|c|c||c|c|c|c|c|c||}
\hline \hline
      & \multicolumn{6}{c||}{Conforming, $p=2$} & \multicolumn{6}{c||}{Nonconforming, $p=2$ and $p=3$}\\ \hline
 Grid & $L^1$    & Rate  & $L^2$    & Rate  & $L^{\infty}$ & Rate  & $L^1$    & Rate  & $L^2$    & Rate  & $L^{\infty}$ & Rate  \\ \hline
 4    & 9.75E-03 & -     & 5.32E-04 & -     & 1.24E-01     & -     & 8.57E-03 & -     & 5.05E-04 & -     & 1.21E-01     & -     \\ \hline
 8    & 3.18E-03 & -1.61 & 2.09E-04 & -1.35 & 6.97E-02     & -0.83 & 2.70E-03 & -1.67 & 1.84E-04 & -1.46 & 8.55E-02     & -0.50 \\ \hline
 16   & 5.18E-04 & -2.62 & 3.88E-05 & -2.43 & 2.51E-02     & -1.47 & 4.25E-04 & -2.67 & 3.58E-05 & -2.36 & 3.23E-02     & -1.40 \\ \hline
 32   & 6.38E-05 & -3.02 & 5.50E-06 & -2.82 & 7.23E-03     & -1.79 & 4.99E-05 & -3.09 & 5.14E-06 & -2.80 & 7.20E-03     & -2.17 \\ \hline
 64   & 7.61E-06 & -3.07 & 7.23E-07 & -2.93 & 1.21E-03     & -2.58 & 5.78E-06 & -3.11 & 7.05E-07 & -2.87 & 1.42E-03     & -2.35 \\ \hline
 128  & 9.48E-07 & -3.00 & 9.95E-08 & -2.86 & 2.75E-04     & -2.14 & 7.14E-07 & -3.02 & 1.06E-07 & -2.74 & 3.52E-04     & -2.01 \\ \hline
 256  & 1.23E-07 & -2.94 & 1.43E-08 & -2.83 & 3.41E-05     & -3.01 & 9.46E-08 & -2.92 & 1.52E-08 & -2.80 & 9.16E-05     & -1.94 \\ \hline
\end{tabular}
  \caption{Convergence study of the isentropic vortex propagation: $p=2$ with $p=3$; density error.}.
\label{tab:iv_p2p3}
\end{center}
\end{table}

\begin{table}[htbp!]
\vspace{0.5cm}
\begin{center}
\scriptsize
\begin{tabular}{||l||c|c|c|c|c|c||c|c|c|c|c|c||}
\hline \hline
      & \multicolumn{6}{c||}{Conforming, $p=3$} & \multicolumn{6}{c||}{Nonconforming, $p=3$ and $p=4$}\\ \hline
 Grid & $L^1$    & Rate  & $L^2$    & Rate  & $L^{\infty}$ & Rate  & $L^1$    & Rate  & $L^2$    & Rate  & $L^{\infty}$ & Rate  \\ \hline
 4    & 5.22E-03 & -     & 3.38E-04 & -     & 9.16E-02     & -     & 4.58E-03 & -     & 3.07E-04 & -     & 8.90E-02     & -     \\ \hline
 8    & 6.84E-04 & -2.93 & 5.30E-05 & -2.67 & 4.71E-02     & -0.96 & 5.81E-04 & -2.98 & 4.94E-05 & -2.63 & 5.26E-02     & -0.76 \\ \hline
 16   & 5.50E-05 & -3.64 & 4.54E-06 & -3.55 & 6.61E-03     & -2.83 & 4.51E-05 & -3.69 & 4.15E-06 & -3.58 & 5.04E-03     & -3.38 \\ \hline
 32   & 3.48E-06 & -3.98 & 3.33E-07 & -3.77 & 5.47E-04     & -3.59 & 2.88E-06 & -3.97 & 3.06E-07 & -3.76 & 4.98E-04     & -3.34 \\ \hline
 64   & 2.10E-07 & -4.05 & 2.45E-08 & -3.76 & 4.93E-05     & -3.47 & 1.78E-07 & -4.01 & 2.34E-08 & -3.71 & 5.42E-05     & -3.20 \\ \hline
 128  & 1.39E-08 & -3.92 & 1.87E-09 & -3.71 & 6.32E-06     & -2.96 & 1.24E-08 & -3.85 & 1.90E-09 & -3.63 & 7.96E-06     & -2.77 \\ \hline
\end{tabular}
  \caption{Convergence study of the isentropic vortex propagation: $p=3$ with $p=4$; density error.}.
\label{tab:iv_p3p4}
\end{center}
\end{table}

\begin{table}[htbp!]
\vspace{0.5cm}
\begin{center}
\scriptsize
\begin{tabular}{||l||c|c|c|c|c|c||c|c|c|c|c|c||}
\hline \hline
      & \multicolumn{6}{c||}{Conforming, $p=4$} & \multicolumn{6}{c||}{Nonconforming, $p=4$ and $p=5$}\\ \hline
 Grid & $L^1$    & Rate  & $L^2$    & Rate  & $L^{\infty}$ & Rate  & $L^1$    & Rate  & $L^2$    & Rate  & $L^{\infty}$ & Rate  \\ \hline
 4    & 2.34E-03 & -     & 1.56E-04 & -     & 9.92E-02     & -     & 2.10E-03 & -     & 1.48E-04 & -     & 9.28E-02     & -     \\ \hline
 8    & 1.70E-04 & -3.78 & 1.43E-05 & -3.45 & 2.32E-02     & -2.09 & 1.48E-04 & -3.82 & 1.37E-05 & -3.43 & 2.30E-02     & -2.01 \\ \hline
 16   & 6.07E-06 & -4.81 & 6.41E-07 & -4.48 & 1.27E-03     & -4.19 & 5.17E-06 & -4.85 & 6.00E-07 & -4.51 & 1.26E-03     & -4.19 \\ \hline
 32   & 1.99E-07 & -4.93 & 2.25E-08 & -4.83 & 5.13E-05     & -4.64 & 1.64E-07 & -4.98 & 2.07E-08 & -4.86 & 5.50E-05     & -4.52 \\ \hline
 64   & 7.11E-09 & -4.81 & 8.60E-10 & -4.71 & 3.01E-06     & -4.09 & 6.36E-09 & -4.69 & 7.95E-10 & -4.70 & 2.99E-06     & -4.20 \\ \hline
\end{tabular}
  \caption{Convergence study of the isentropic vortex propagation: $p=4$ with $p=5$; density error.}.
\label{tab:iv_p4p5}
\end{center}
\end{table}

\begin{table}[htbp!]
\vspace{0.5cm}
\begin{center}
\scriptsize
\begin{tabular}{||l||c|c|c|c|c|c||c|c|c|c|c|c||}
\hline \hline
      & \multicolumn{6}{c||}{Conforming, $p=5$} & \multicolumn{6}{c||}{Nonconforming, $p=5$ and $p=6$}\\ \hline
 Grid & $L^1$    & Rate  & $L^2$    & Rate  & $L^{\infty}$ & Rate  & $L^1$    & Rate  & $L^2$    & Rate  & $L^{\infty}$ & Rate  \\ \hline
 4    & 1.01E-03 & -     & 6.88E-05 & -     & 4.53E-02     & -     & 9.14E-04 & -     & 6.63E-05 & -     & 4.45E-02     & -     \\ \hline
 8    & 3.81E-05 & -4.73 & 3.80E-06 & -4.18 & 6.37E-03     & -2.83 & 3.26E-05 & -4.81 & 3.61E-06 & -4.20 & 6.44E-03     & -2.79 \\ \hline
 16   & 7.23E-07 & -5.72 & 7.95E-08 & -5.58 & 2.23E-04     & -4.84 & 5.93E-07 & -5.78 & 7.30E-08 & -5.63 & 1.63E-04     & -5.30 \\ \hline
 32   & 1.24E-08 & -5.87 & 1.59E-09 & -5.65 & 4.99E-06     & -5.48 & 1.05E-08 & -5.82 & 1.46E-09 & -5.65 & 4.99E-06     & -5.03 \\ \hline
\end{tabular}
  \caption{Convergence study of the isentropic vortex propagation: $p=5$ with $p=6$; density error.}.
\label{tab:iv_p5p6}
\end{center}
\end{table}

\begin{table}[htbp!]
\vspace{0.5cm}
\begin{center}
\scriptsize
\begin{tabular}{||l||c|c|c|c|c|c||c|c|c|c|c|c||}
\hline \hline
      & \multicolumn{6}{c||}{Conforming, $p=6$} & \multicolumn{6}{c||}{Nonconforming, $p=6$ and $p=7$}\\ \hline
 Grid & $L^1$    & Rate  & $L^2$    & Rate  & $L^{\infty}$ & Rate  & $L^1$    & Rate  & $L^2$    & Rate  & $L^{\infty}$ & Rate  \\ \hline
 4    & 4.28E-04 & -     & 3.14E-05 & -     & 2.17E-02     & -     & 3.84E-04 & -     & 3.03E-05 & -     & 2.35E-02     & -     \\ \hline
 8    & 8.58E-06 & -5.64 & 9.16E-07 & -5.10 & 1.57E-03     & -3.78 & 7.27E-06 & -5.72 & 8.72E-07 & -5.12 & 1.58E-03     & -3.89 \\ \hline
 16   & 7.97E-08 & -6.75 & 1.02E-08 & -6.49 & 2.20E-05     & -6.16 & 6.58E-08 & -6.79 & 9.41E-09 & -6.53 & 2.37E-05     & -6.06 \\ \hline
 32   & 5.95E-10 & -7.07 & 8.17E-11 & -6.96 & 2.55E-07     & -6.43 & 6.05E-10 & -6.76 & 7.71E-11 & -6.93 & 3.55E-07     & -6.06  \\ \hline
\end{tabular}
  \caption{Convergence study of the isentropic vortex propagation: $p=6$ with $p=7$; density error.}.
\label{tab:iv_p6p7}
\end{center}
\end{table}
For all the degrees tested (i.e., $p=1$ to $p=7$), the order of convergence of the conforming algorithm is very close to that of the nonconforming. 
However, note that in the $L^{1}$ and 
$L^{2}$ norms the nonconforming algorithm is more accurate than the conforming one. 
In the discrete $L^{\infty}$ norm, the nonconforming scheme 
is sometimes slightly worse than the conforming scheme; this results from the interpolation
matrices being sub-optimal at nonconforming interfaces.

\subsection{Inviscid Taylor--Green vortex propagation}\label{subsec:itgv}
The purpose of this section is to demonstrate that robustness of the nonconforming entropy 
stable algorithm. To do so, 
the inviscid Taylor--Green vortex problem is solved using a coarse grid with four elements in
each coordinate direction. 

The test case is solved on a periodic cube $[-\pi L\leq x,y,z\leq \pi L]$, where the initial condition 
is given by the initial condition used for the simulation of the standard (viscous)
Taylor--Green problem:
\begin{equation*}\label{eq:TG}
\begin{split}
&\fnc{U}_{1} = \fnc{V}_{0}\sin\left(\frac{x_{1}}{L}\right)\cos\left(\frac{x_{2}}{L}\right)\cos\left(\frac{x_{3}}{L}\right), \,
\fnc{U}_{2} = -\fnc{V}_{0}\cos\left(\frac{x_{1}}{L}\right)\sin\left(\frac{x_{2}}{L}\right)\cos\left(\frac{x_{3}}{L}\right),\\
&\fnc{U}_{3} = 0, \, 
\fnc{P} = \fnc{P}_{0}+\frac{\rho_{0}\fnc{V}_{0}^{2}}{16}\left[\cos\left(\frac{2x_{1}}{L}+\cos\left(\frac{2x_{2}}{L}\right)\right)\right]
\left[\cos\left(\frac{2x_{3}}{L}+2\right)\right].
\end{split}
\end{equation*}
The flow is initialized using $\fnc{P}/\rho=\fnc{P}_{0}/\rho_{0}=R\fnc{T}_{0}$, and $\fnc{P}_{0}=1$, $\fnc{T}_{0}=1$, $L=1$, and $\fnc{V}_{0} = 1$.   
In order to obtain results that are reasonably close to those found for the incompressible 
equations, a Mach number of $M = 0.05$ is used.

The problem is simulated for twenty time units using an unstructured grid constructed as follows:
\begin{itemize}
\item Divide the computational domain with four hexahedral elements in each coordinate direction.
\item Assign the solution polynomial degree in each element to a random integer chosen uniformly from a set $\{p_{min}, p_{max}\}$.
\item Approximate the element interfaces with a $p_{min}$th accurate polynomial.
\item Construct the perturbed elements and their corresponding LGL points as described in Section \ref{subsec:iv}. 
\end{itemize}

Figure \ref{fig:dkedt_itg} shows the time rate of change of the kinetic energy, $dke/dt$, for
the nonconforming algorithm using a random distribution of solution polynomial
order between i) $p_{min}=2$ and $p_{max}=7$ and ii) $p_{min}=2$ and $p_{max}=10$.
\begin{figure}[htbp!]
   \centering
   \includegraphics[width=0.85\textwidth]{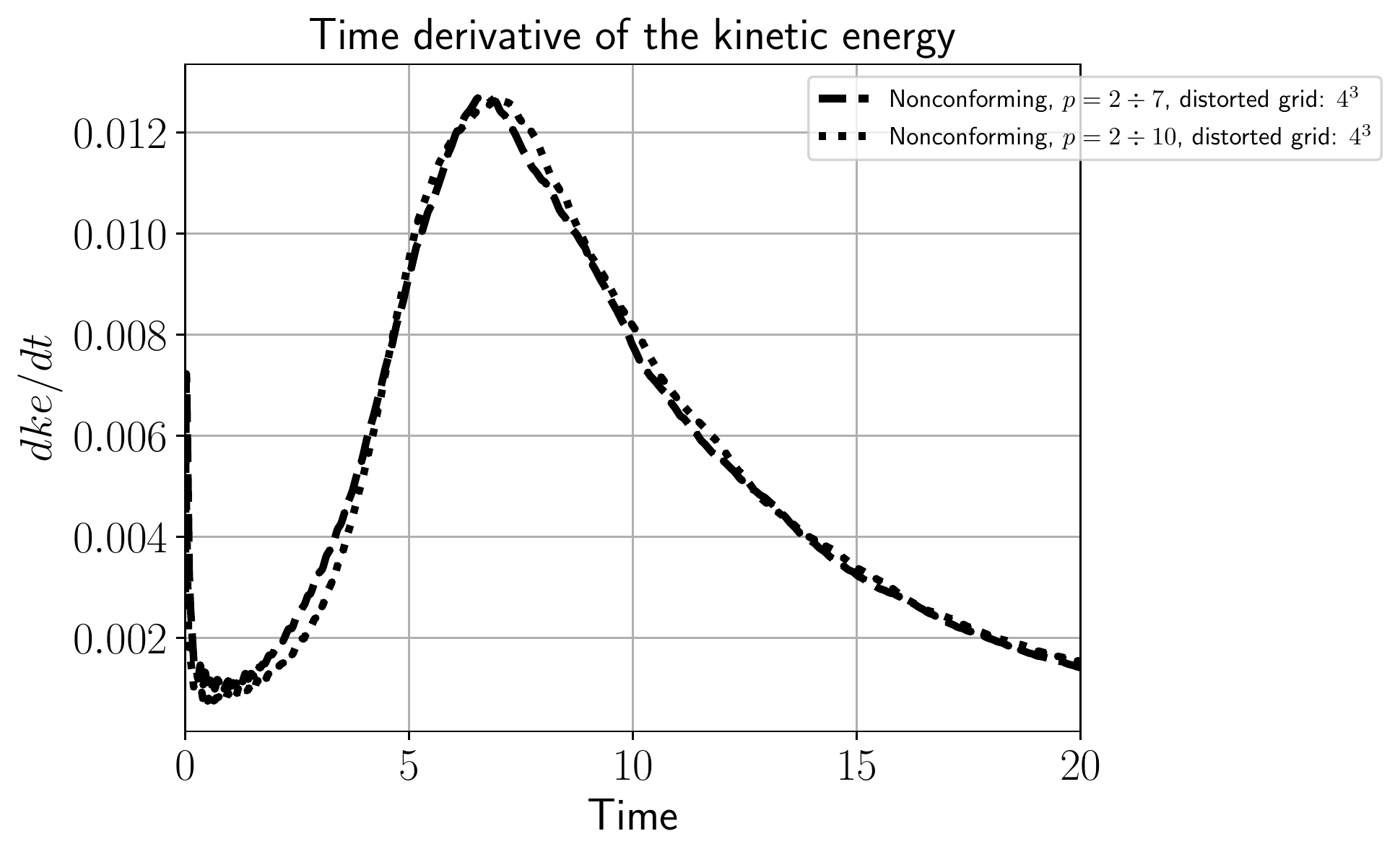}
   \caption{Evolution of the time derivative of the kinetic energy 
   for the inviscid Taylor--Green vortex at $M = 0.05$.}
   \label{fig:dkedt_itg}
\end{figure}
The main 
take-away from the figure is that all simulations are stable. This 
is numerical evidence that the $p$-nonconforming 
scheme inherits the stability characteristics of the conforming 
and fully staggered algorithms \cite{Carpenter2014,Parsani2015,Carpenter2016b,Carpenter2016,Parsani2016}.
\section{Conclusions}\label{sec:Conclusion}
In this paper, the entropy conservative $p-$refinement/coarsening nonconforming algorithm of 
Friedrich~\etal~\cite{Friedrich2018} is extended to curvilinear coordinates applicable to the 
compressible Euler equations. The coupling between nonconforming interfaces 
is achieved using SBP preserving interpolation operators and prescribed metric terms. To maintain entropy 
conservation/stability, elementwise conservation, and the order of the parent element, the procedure of 
Crean~\etal~\cite{Crean2018} is used to approximate grid metrics. 
Finally, the accuracy and stability characteristics of the resulting numerical schemes are shown to be 
comparable to those of the original conforming scheme \cite{Carpenter2014,Parsani2015}, in the context of two canonical tests.

\section*{Acknowledgments}
Special thanks are extended to Dr. Mujeeb R. Malik for partially funding this work as part of
NASA's ``Transformational Tools and Technologies'' ($T^3$) project. The research reported in this publication was
also supported by funding from King Abdullah University of Science and Technology (KAUST).
We are thankful for the computing resources of the Supercomputing Laboratory and the Extreme Computing Research Center at KAUST. Gregor Gassner and Lucas Friedrich has been supported by the European Research Council (ERC) under the European Union’s Eights Framework Program Horizon 2020 with the research project Extreme, ERC grant agreement no. 714487.

\bibliographystyle{siamplain}
\bibliography{Bib}

\iftoggle{proofs}{
  \appendix
\section{Generalized notation and nonlinear analysis}\label{app:nonlinear}
The various proofs presented in these appendices are greatly simplified (and also generalized) by using 
notation appropriate for multidimensional SBP operators. In this section, the notation that will be 
used is presented and linked to the tensor-product notation used in the paper. Furthermore, the nonlinear approximation 
used to construct entropy conservative discretizations will be analyzed in detail.

\subsection{Generalized notation}
The derivative 
operators that are used are constructed using tensor-products. For a system of $e$ PDEs they read 
\begin{equation*}
  \begin{split}
  &\Dxil{1}\equiv\barDxil{1}\otimes\Imat{e},\quad\barDxil{1}\equiv\DxiloneD{1}\otimes\Imat{\Nl{2}}\otimes\Imat{\Nl{3}},\\
  &\Dxil{2}\equiv\barDxil{2}\otimes\Imat{e},\quad\barDxil{2}\equiv\Imat{\Nl{1}}\otimes\DxiloneD{2}\otimes\Imat{\Nl{3}},\\
  &\Dxil{3}\equiv\barDxil{3}\otimes\Imat{e},\quad\barDxil{3}\equiv\Imat{\Nl{1}}\otimes\Imat{\Nl{2}}\otimes\DxiloneD{3}.
  \end{split}
\end{equation*}
The above tensor-product SBP operators can be recast as multidimensional SBP operators as follows:
\begin{equation*}
  \begin{split}
  &\Dxil{1}=\M^{-1}\Qxil{1},\quad\Qxil{1}\equiv\barQxil{1}\otimes\Imat{e},\quad \barQxil{1}\equiv\QxiloneD{1}\otimes\PxiloneD{2}\otimes\PxiloneD{3},\\
  &\Dxil{2}=\M^{-1}\Qxil{2},\quad\Qxil{2}\equiv\barQxil{2}\otimes\Imat{e},\quad \barQxil{2}\equiv\PxiloneD{1}\otimes\QxiloneD{2}\otimes\PxiloneD{3},\\
  &\Dxil{3}=\M^{-1}\Qxil{3},\quad\Qxil{3}\equiv\barQxil{3}\otimes\Imat{e},\quad \barQxil{3}\equiv\PxiloneD{1}\otimes\PxiloneD{2}\otimes\QxiloneD{3},\\
  &\M\equiv\barM\otimes\Imat{e},\quad\barM\equiv\PxiloneD{1}\otimes\PxiloneD{2}\otimes\PxiloneD{3}.
  \end{split}
\end{equation*}

The resulting recast of the tensor-product SBP operators as multidimensional SBP operators respects the SBP property, \ie, $\Qxil{l}+\Qxil{l}\Tr=\Exil{l}$, 
where the surface matrices $\Exil{l}$ are given as
\begin{equation*}
  \begin{split}
  &\Exil{1}=\barExil{1}\otimes\Imat{e},\quad\barExil{1}\equiv\left(\eNl{1}\eNl{1}\Tr-\eonel{1}\eonel{1}\Tr\right)\otimes\PxiloneD{2}\otimes\PxiloneD{3},\\
  &\Exil{2}=\barExil{2}\otimes\Imat{e},\quad\barExil{2}\equiv\PxiloneD{1}\otimes\left(\eNl{2}\eNl{2}\Tr-\eonel{2}\eonel{2}\Tr\right)\otimes\PxiloneD{3},\\
  &\Exil{3}=\barExil{3}\otimes\Imat{e},\quad\barExil{3}\equiv\PxiloneD{1}\otimes\PxiloneD{2}\otimes\left(\eNl{3}\eNl{3}\Tr-\eonel{3}\eonel{3}\Tr\right).
  \end{split}
\end{equation*}
In precisely the same way that the one-dimensional $\mat{E}$ can be decomposed into separate surface contributions, 
\ie, $\mat{E} = \eNl{}\eNl{}\Tr-\eonel{}\eonel{}\Tr$, 
the multi-dimensional $\mat{E}$ given above can be decomposed into contributions to the faces of the hexahedral element. Thus, $\Exil{1}$ is decomposed as 
\begin{equation*}
  \begin{split}
  &\Exil{1}=\Ebetal{1}-\Ealphal{1},\quad,\Ebetal{1}\equiv\Rbetal{1}\Tr\Porthol{1}\Rbetal{1},\quad\Ealphal{1}\equiv\Ralphal{1}\Tr\Porthol{1}\Ralphal{1},\\
  &\Rbetal{1}\equiv\barRbetal{1}\otimes\Imat{e}\quad\barRbetal{1}\equiv\eonel{\Nl{1}}\Tr\otimes\Imat{\Nl{2}}\otimes\Imat{\Nl{3}},\\
  &\Ralphal{1}\equiv\barRalphal{1}\otimes\Imat{e}\quad\barRalphal{1}\equiv\eonel{1}\Tr\otimes\Imat{\Nl{2}}\otimes\Imat{\Nl{3}},\\
  &\Porthol{1}\equiv\barPorthol{1}\otimes\Imat{e},\quad\barPorthol{1}\equiv\PxiloneD{2}\otimes\PxiloneD{3},
  \end{split}
\end{equation*}
$\Exil{2}$ is decomposed as 
\begin{equation*}
  \begin{split}
  &\Exil{2}=\Ebetal{2}-\Ealphal{2},\quad,\Ebetal{2}\equiv\Rbetal{2}\Tr\Porthol{2}\Rbetal{2},\quad\Ealphal{2}\equiv\Ralphal{2}\Tr\Porthol{2}\Ralphal{2},\\
  &\Rbetal{2}\equiv\barRbetal{2}\otimes\Imat{e}\quad\barRbetal{2}\equiv\Imat{\Nl{1}}\otimes\eonel{\Nl{2}}\Tr\otimes\Imat{\Nl{3}},\\
  &\Ralphal{2}\equiv\barRalphal{2}\otimes\Imat{e}\quad\barRalphal{2}\equiv\Imat{\Nl{1}}\otimes\eonel{1}\Tr\otimes\Imat{\Nl{3}},\\
  &\Porthol{2}\equiv\barPorthol{2}\otimes\Imat{e},\quad\barPorthol{2}\equiv\PxiloneD{1}\otimes\PxiloneD{3},
  \end{split}
\end{equation*}
 and $\Exil{3}$ is decomposed as 
\begin{equation*}
  \begin{split}
  &\Exil{3}=\Ebetal{3}-\Ealphal{3},\quad,\Ebetal{2}\equiv\Rbetal{3}\Tr\Porthol{3}\Rbetal{3},\quad\Ealphal{3}\equiv\Ralphal{3}\Tr\Porthol{3}\Ralphal{3},\\
  &\Rbetal{3}\equiv\barRbetal{3}\otimes\Imat{e}\quad\barRbetal{3}\equiv\Imat{\Nl{1}}\otimes\Imat{\Nl{2}}\otimes\eonel{\Nl{3}}\Tr,\\
  &\Ralphal{3}\equiv\barRalphal{3}\otimes\Imat{e}\quad\barRalphal{3}\equiv\Imat{\Nl{1}}\otimes\Imat{\Nl{2}}\otimes\eonel{3}\Tr,\\
  &\Porthol{3}\equiv\barPorthol{3}\otimes\Imat{e},\quad\barPorthol{3}\equiv\PxiloneD{1}\otimes\PxiloneD{2}.
  \end{split}
\end{equation*}
To give further insight into the properties of the above operators, a connection to various bilinear forms is given:
\begin{equation*}
  \begin{array}{ll}
  \bm{v}\Tr\barM\bm{u}\equiv\int_{\Ohat}\fnc{V}\fnc{U}\mr{d}\Ohat,&\bm{v}\Tr\barQxil{l}\bm{u}\equiv\int_{\Ohat}\fnc{V}\frac{\partial\fnc{U}}{\partial \xil{l}}\mr{d}\Ohat,\\\\
  \bm{v}\Tr\barExil{l}\bm{u}\approx\oint_{\Ghat}\fnc{V}\fnc{U}\nxil{l}\mr{d}\Ghat,\\\\
  \bm{v}\Tr\barEbetal{l}\bm{u}\approx\oint_{\Ghat_{\betal{l}}}\fnc{V}\fnc{U}\nxil{l}\mr{d}\Ghat,&\bm{v}\Tr\barEalphal{l}\bm{u}\approx\oint_{\Ghat_{\alphal{l}}}\fnc{V}\fnc{U}\nxil{l}\mr{d}\Ghat,
  \end{array}
\end{equation*}
 where $\Ghat_{\betal{l}}$ is the surface of the hexahedral element where $\xil{l}$ is at a maximum and $\Ghat_{\alphal{l}}$ is the surface where $\xil{l}$ is at a minimum. 
 Furthermore, the operators $\barRbetal{l}$ and $\barRalphal{l}$ interpolate to the nodes of the $\Ghat_{\betal{l}}$ and $\Ghat_{\alphal{l}}$ surfaces, respectively.

\subsection{Analysis of the nonlinear discretization}
SBP operators are constructed so that the continuous stability proofs can be mimicked at the semi-discrete and fully-discrete levels. 
The critical property that is needed is that, like the continuous analysis, the spatial (and temporal in the fully discrete case) terms 
telescope to the boundaries. Then if appropriate numerical boundary closures can be found, discrete stability statements can be constructed. 
Often times this is linked to the schemes mimicking integration by parts (this is what is used at the continuous level). However,
any combination of differentiation matrix $\mat{D}$ and norm matrix $\mat{P}$ is mimetic of integration by parts. It is the fact 
that SBP operators result into (node-wise) separable approximations to surface integrals that allows stability estimates to be constructed. 

The telescoping notion is best explained by carefully examining how SBP operators mimic integration by parts. In multiple dimensions, 
integration by parts on a hexahedral element is given as 
\begin{equation}\label{eqapp:IBP}
  \oint_{\Ohat}\left(\fnc{V}\frac{\partial\fnc{U}}{\partial\xil{l}}+\fnc{V}\frac{\partial\fnc{U}}{\partial\xil{l}}\right)\mr{d}\Ohat=
  \oint_{\Ghat}\left(\fnc{V}\fnc{U}\nxil{l}\right) = \oint_{\Ghat_{\beta{l}}}\left(\fnc{V}\fnc{U}\nxil{l}\right)+\oint_{\Ghat_{\alpha{l}}}\left(\fnc{V}\fnc{U}\nxil{l}\right).
\end{equation}

Discretizing the left-hand side of~\eqref{eqapp:IBP} using SBP operators and their properties results in the following equality:
\begin{equation}\label{eqapp:SBP1}
  \begin{split}
  \bm{v}\Tr\barM\barDxil{l}\bm{u}+\bm{u}\barM\barDxil{l}\bm{v}=\bm{v}\Tr\barExil{l}\bm{u}.
  \end{split}
\end{equation}
Each term in~\eqref{eqapp:SBP1} is a high-order approximation to the analogous term in integration by parts formula~\eqref{eqapp:IBP}. Notice, though, that 
this would also occur for any combination of a high-order norm matrix that approximates the inner product and a high-order derivative operator.

However, the SBP operator has an $\mat{E}$ matrix that can be further decomposed into separate contributions from opposing surfaces (this property does not in general hold 
for arbitrary combinations of $\mat{P}$ and $\mat{D}$), namely

\begin{equation}\label{eqapp:SBP2}
  \begin{split}
  \bm{v}\Tr\barM\barDxil{l}\bm{u}+\bm{u}\barM\barDxil{l}\bm{v}=
  \bm{v}\Tr\barEbetal{l}\bm{u}-\bm{v}\Tr\barEalphal{l}\bm{u}.
  \end{split}
\end{equation}

Now the right-hand side of \eqref{eqapp:SBP1} has been decomposed in terms of contributions to the $\Ghat^{\betal{l}}$ and $\Ghat^{\alphal{l}}$ surfaces; this is still insufficient 
unless one is happy with imposing the same boundary condition over the entire face. What is needed is the ability to impose boundary conditions 
point-wise. The next decomposition demonstrates that a point-wise interpretation of the telescoping property is possible:
\begin{equation}\label{eqapp:SBP3}
  \begin{split}
  \bm{v}\Tr\barM\barDxil{l}\bm{u}+\bm{u}\barM\barDxil{l}\bm{v}=
  \left(\barRbetal{l}\bm{v}\right)\Tr\barPorthol{l}\barRbetal{l}\bm{u}-\left(\barRalphal{l}\bm{v}\right)\Tr\barPorthol{l}\barRalphal{l}\bm{u}.
  \end{split}
\end{equation}
To see why the right-hand side is separable, point-wise, consider the term $ \left(\barRbetal{l}\bm{v}\right)\Tr\barPorthol{l}\barRbetal{l}\bm{u}$. 
The action of $\Ralphal{l}$ is to interpolate $\bm{v}$ or $\bm{u}$ to the $\Ghat_{\betal{l}}$ boundary. Defining the vectors 
$\bm{v}_{\Ghat_{\betal{l}}}\equiv\barRbetal{l}\bm{v}$  and $\bm{u}_{\Ghat_{\betal{l}}}\equiv\barRbetal{l}\bm{u}$  gives 
\begin{equation}
\left(\barRbetal{l}\bm{v}\right)\Tr\barPorthol{l}\barRbetal{l}\bm{u} = \bm{v}_{\Ghat_{\betal{l}}}\Tr\barPorthol{l}\bm{u}_{\Ghat_{\betal{l}}}.
\end{equation}
Since $\barPorthol{l}$ is diagonal, this means that the surface nodes can be traversed and at each node an appropriate boundary condition 
can be imposed. 

The nonlinear approximations used in this paper, constructed from SBP operators and two-point flux function matrices, poses 
the same type of properties as the above SBP operators. Namely, the result is mimetic of the following (nonlinear) form of integration 
by parts having the telescoping property
\begin{equation}\label{eq:nonIBP}
\frac{1}{2}\sum\limits_{l,m=1}^{3}\int_{\Ohat}\fnc{W}\Tr\left\{\frac{\partial}{\partial\xil{l}}\left(\Jdxildxm{l}{m}\Fxm{m}\right)+\Jdxildxm{l}{m}\frac{\partial\Fxm{m}}{\partial\xil{l}}\right\}\mr{d}\Ohat=
\sum\limits_{l,m=1}^{3}\oint_{\Ghat}\left(\Jdxildxm{l}{m}\Fxm{m}\nxil{m}\right)\mr{d}\Ghat.
\end{equation} 

The starting point is to analyze the properties of the nonlinear approximation. The following is a general result on the accuracy of the 
nonlinear approximations and is an extension of the proofs give in Crean~\etal~\cite{Crean2018}. 
\begin{thrm}\label{thrm:accDxil}
Let $\Dxil{l}^{\kappa}$ be any degree $p$ finite-difference approximation to the first derivative 
$\partial/\partial\xil{l}$, on a set of nodes $\bm{\xi}$. 
Consider a PDE whose fluxes in the $\xm{m}$, $m=1,2,3$, coordinate directions are continuously differentiable functions 
$\Fxm{m}:\mathbb{R}^{5}\rightarrow\mathbb{R}^{5}$ and a variable coefficient $\fnc{A}_{\kappa}$ that 
is sufficiently smooth. If $\fxmsc{m}{\bm{u}}{\bm{v}}:\mathbb{R}^{5}\times\mathbb{R}^{5}\rightarrow\mathbb{R}^{5}$ are 
dyadic functions that are continuously differentiable, symmetric in their arguments, and satisfy 
$\fxmsc{m}{\qki{\kappa}{(i)}}{\qki{\kappa}{(i)}}=\Fxm{m}\left(\qki{\kappa}{(i)}\right)$, then for sufficiently 
smooth solutions $\bfnc{Q}$
\begin{equation}\label{acc:Hadamard}
\begin{split}
&\left(2\Dxil{l}^{\kappa}\left[\fnc{A}\right]_{\kappa}\right)\circ\matFxm{m}{\qk{\kappa}}{\qk{\kappa}}\ones{\kappa}=
\left(\frac{\partial\left(\fnc{A}\bfnc{F}_{\xm{m}}\right)}{\partial \xil{l}}\right)\left(\bm{\xi}_{\kappa}\right)
+\left(\Fxm{m}\frac{\partial\fnc{A}}{\partial\xil{l}}\right)\left(\bm{\xi}_{\kappa}\right)
+\mathcal{O}\left(h^{p+1}\right),\\
&\left(2\left[\fnc{A}\right]_{\kappa}\Dxil{l}^{\kappa}\right)\circ\matFxm{m}{\qk{\kappa}}{\qk{\kappa}}\ones{\kappa}=
\left(\fnc{A}\frac{\partial\Fxm{m}}{\partial\xil{l}}\right)\left(\bm{\xi}_{\kappa}\right)+\mathcal{O}\left(h^{p+1}\right),
\end{split}
\end{equation}
where $\left[\fnc{A}\right]_{\kappa}$ is a diagonal matrix containing, along its diagonal, the evaluation of the variable 
coefficient at the mesh nodes $\bm{\xi}_{\kappa}$, and $h$ is some appropriate measure of the mesh spacing within the 
element. 
\end{thrm}
\begin{proof}
  The proof is given in~\ref{proof:thrm:accDxil}
\end{proof}
Theorem~\ref{thrm:accDxil} implies that 
\begin{equation}\label{acc:Hadamardmetrics}
\begin{split}
&\left(2\Dxil{l}^{\kappa}\matAlmk{l}{m}{\kappa}\right)\circ\matFxm{m}{\qk{\kappa}}{\qk{\kappa}}\ones{\kappa}=
\left(\frac{\partial\left(\Jdxildxm{l}{m}\bfnc{F}_{\xm{m}}\right)}{\partial \xil{l}}\right)\left(\bm{\xi}_{\kappa}\right)
+\left(\Fxm{m}\frac{\partial\fnc{A}}{\partial\xil{l}}\right)\left(\bm{\xi}_{\kappa}\right)
+\mathcal{O}\left(h^{p+d}\right),\\
&\left(2\matAlmk{l}{m}{\kappa}\Dxil{l}^{\kappa}\right)\circ\matFxm{m}{\qk{\kappa}}{\qk{\kappa}}\ones{\kappa}=
\left(\Jdxildxm{l}{m}\frac{\partial\Fxm{m}}{\partial\xil{l}}\right)\left(\bm{\xi}_{\kappa}\right)+\mathcal{O}\left(h^{p+d}\right),
\end{split}
\end{equation}
where the change in the truncation terms comes from the fact that $\Jdxildxm{l}{m}\propto h^{d-1}$. Notice that summing the first equality 
in the three computational directions and using the continuous GCL gives that 
\begin{equation}\label{acc:HadamardmetricsGCL}
\begin{split}
&\sum\limits_{l=1}^{3}\left(2\Dxil{l}^{\kappa}\matAlmk{l}{m}{\kappa}\right)\circ\matFxm{m}{\qk{\kappa}}{\qk{\kappa}}\ones{\kappa}=
\sum\limits_{l=1}^{3}\left(\frac{\partial\left(\Jdxildxm{l}{m}\bfnc{F}_{\xm{m}}\right)}{\partial \xil{l}}\right)\left(\bm{\xi}_{\kappa}\right)
+\mathcal{O}\left(h^{p+d}\right).
\end{split}
\end{equation}

In order to demonstrate that the nonlinear approximation leads to a telescoping approximation to~\ref{eq:nonIBP}, like 
in the analysis of integration by parts, first, the constituent matrices of the nonlinear approximation are characterized as approximations to 
various bilinear forms.

The starting point is the accuracy of the on element surface matrix terms such as ,$\Exil{l}\matAlmk{l}{m}{\kappa}\circ\matFxm{m}{\qk{\kappa}}{\qk{\kappa}}\ones{\kappa}$.
For generality and so that the results of this section can be utilized for the element-wise conservation analysis, the bilinear forms that are consider are 
products of continuous scalar functions, $\fnc{V}$, against one of the components of the derivative of the flux vector $\Fxm{m}$. For this purpose, 
the scalar version of the flux function matrix is introduced as follows: 

\begin{equation}\label{eq:matFxmsca}
  \matFxmscai{m}{\qk{\kappa}}{\qk{r}}{i}\equiv
  \left[
    \begin{array}{ccc}
      \left(\fxmsc{m}{\qki{\kappa}{(1)}}{\qki{r}{(1)}}\right)(i)&\dots&\left(\fxmsc{m}{\qki{\kappa}{(1)}}{\qki{r}{(\Nl{r})}}\right)(i)\\
      \vdots&\vdots\\
\left(\fxmsc{m}{\qki{\kappa}{(\Nl{\kappa})}}{\qki{r}{(1)}}\right)(i)&\dots&\left(\fxmsc{m}{\qki{\kappa}{(\Nl{\kappa})}}{\qki{r}{(\Nl{r})}}\right)(i)
    \end{array}
  \right],\quad i = 1,\dots,5.
\end{equation}
\begin{thrm}\label{thrm:accExil}
Under the same conditions as in Thrm.~\ref{thrm:accDxil} and considering the constituent matrices 
of a degree $p$ SBP operator, $\barDxil{l}^{\kappa}$, with a norm matrix $\barM$ of degree $\pP$ and $\mat{R}$ matrices of degree $r$, 
for all smooth functions $\fnc{V}$
\begin{equation}\label{eq:accExil}
\begin{split}
&\vk{\kappa}\Tr\left(\barExil{l}^{\kappa}\barmatAk{\kappa}\right)\circ\matFxmscai{m}{\qk{\kappa}}{\qk{\kappa}}{i}\barones{\kappa}=
\oint_{\Ghatk}\fnc{V}\fnc{A}\Fxm{m}(i)\nxil{l}\mr{d}\Ghat+\max\left[\mathcal{O}\left(h^{\pP+1}\right),\mathcal{O}\left(h^{r+1}\right)\right],\\
&\vk{\kappa}\Tr\left(\barmatAk{\kappa}\barExil{l}^{\kappa}\right)\circ\matFxmscai{m}{\qk{\kappa}}{\qk{\kappa}}{i}\barones{\kappa}=
\oint_{\Ghatk}\fnc{V}\fnc{A}\Fxm{m}(i)\nxil{l}\mr{d}\Ghat
+\max\left[\mathcal{O}\left(h^{\pP+1}\right),\mathcal{O}\left(h^{r+1}\right)\right],\\
&i=1,\dots,5.
\end{split}
\end{equation}
\end{thrm}
\begin{proof}
The proof is given in Appendix~\ref{proof:thrm:accExil}.
\end{proof}
Theorem~\ref{thrm:accExil} implies that 
\begin{equation}\label{eq:accExilmetrics}
\begin{split}
&\vk{\kappa}\Tr\left(\barExil{l}^{\kappa}\barmatAlmk{l}{m}{\kappa}\right)\circ\matFxmscai{m}{\qk{\kappa}}{\qk{\kappa}}{i}\barones{\kappa}=
\oint_{\Ghatk}\fnc{V}\Jdxildxm{l}{m}\Fxm{m}(i)\nxil{l}\mr{d}\Ghat+\max\left[\mathcal{O}\left(h^{\pP+d}\right),\mathcal{O}\left(h^{r+d}\right)\right],\\
&\vk{\kappa}\Tr\left(\barmatAlmk{l}{m}{\kappa}\barExil{l}^{\kappa}\right)\circ\matFxmscai{m}{\qk{\kappa}}{\qk{\kappa}}{i}\barones{\kappa}=
\oint_{\Ghatk}\fnc{V}\Jdxildxm{l}{m}\Fxm{m}(i)\nxil{l}\mr{d}\Ghat
+\max\left[\mathcal{O}\left(h^{\pP+d}\right),\mathcal{O}\left(h^{r+d}\right)\right],\\
&i=1,\dots,5.
\end{split}
\end{equation}

In order to demonstrate the telescoping flux form, the following general result is necessary
\begin{thrm}\label{thrm:telescope}
Consider the matrix of $\overline{\mat{A}}$ of size $\Nl{\kappa}\times \Nl{r}$ with a tensor extension 
$\mat{A}\equiv\overline{\mat{A}}\otimes\Imat{5}$, and a two argument matrix flux 
function $\matFxm{m}{\qk{\kappa}}{\qk{r}}$ constructed from the two point 
flux function $\fxmsc{m}{\qki{\kappa}{(i)}}{\qki{r}{(j)}}$ that satisfies the Tadmor shuffle condition
\[
\left(\wki{\kappa}{(i)}-\wki{r}{(j)}\right)\Tr\fxmsc{m}{\qki{\kappa}{(i)}}{\qki{r}{(j)}}=
\psixmki{m}{\kappa}{i}-\psixmki{m}{r}{j}
\]
and is symmetric, \ie, $\fxmsc{m}{\qki{\kappa}{(i)}}{\qki{r}{(j)}}= \fxmsc{m}{\qki{r}{(j)}}{\qki{\kappa}{(i)}}$, then
\begin{equation*}
\wk\Tr\left(\mat{A}\right)\circ\matFxm{m}{\qk{\kappa}}{\qk{r}}\ones{r}-
\ones{\kappa}\Tr\mat{A}\circ\matFxm{m}{\qk{\kappa}}{\qk{r}}\wk{r} =
\left(\psixmk{m}{\kappa}\right)\Tr\overline{\mat{A}}\barones{r}-\barones{\kappa}\Tr\overline{\mat{A}}\psixmk{m}{r}.
\end{equation*}
\end{thrm}
\begin{proof}
\begin{equation*}
\begin{split}
  &\wk\Tr\left(\mat{A}\right)\circ\matFxm{m}{\qk{\kappa}}{\qk{r}}\ones{r}-
\ones{\kappa}\Tr\mat{A}\circ\matFxm{m}{\qk{\kappa}}{\qk{r}}\wk{r} =\\
&\sum\limits_{i=1}^{\Nl{\kappa}}\sum\limits_{j=1}^{\Nl{r}}
\left\{
\overline{\mat{A}}(i,j)\left(\wki{\kappa}{(i)}\right)\Tr\matFxm{m}{\qki{\kappa}{(i)}}{\qki{r}{(i)}}-
\overline{\mat{A}}(i,j)\left(\wki{r}{(j)}\right)\Tr\matFxm{m}{\qki{\kappa}{(i)}}{\qki{r}{(i)}}
\right\}=\\
&\sum\limits_{i=1}^{\Nl{\kappa}}\sum\limits_{j=1}^{\Nl{r}}
\left\{
\overline{\mat{A}}(i,j)\left(\wki{\kappa}{(i)}-\wki{r}{(j)}\right)\Tr\matFxm{m}{\qki{\kappa}{(i)}}{\qki{r}{(i)}}
\right\}=\\
&\sum\limits_{i=1}^{\Nl{\kappa}}\sum\limits_{j=1}^{\Nl{r}}
\left\{
\overline{\mat{A}}(i,j)\left(\psixmki{m}{\kappa}{i}-\psixmki{m}{r}{j}\right)
\right\} = \left(\psixmk{m}{\kappa}\right)\Tr\overline{\mat{A}}\barones{r}-\barones{\kappa}\Tr\overline{\mat{A}}\psixmk{m}{r}.
\end{split}
\end{equation*}
\end{proof}

Now it is demonstrated, using~\eqref{acc:Hadamardmetrics}, \eqref{acc:HadamardmetricsGCL}, \eqref{eq:accExilmetrics}, and Theorem \ref{thrm:telescope}, that the nonlinear approximation results 
in a form that is telescoping and mimetic of the nonlinear integration by parts formula~\eqref{eq:nonIBP}. 
Discretizing the left-hand side of~\eqref{eq:nonIBP} using the nonlinear operator gives
\begin{equation}\label{eq:nonSBP1}
  \begin{split}
  &\bm{lhs}\equiv\sum\limits_{l,m=1}^{3}\wk{\kappa}\Tr
  \left(\Qxil{l}\matAlmk{l}{m}{\kappa}+\matAlmk{l}{m}{\kappa}\Qxil{l}\right)\circ\matFxm{m}{\qk{\kappa}}{\qk{\kappa}}\ones{k},
  \end{split}
\end{equation}
where for a nonconforming element the macro element is considered. Using~\eqref{acc:Hadamardmetrics}, \eqref{acc:HadamardmetricsGCL}, then it can be shown that \Eq~\eqref{eq:nonSBP1} is an approximation 
of the left-hand side of~\eqref{eq:nonIBP}, \ie,  
\begin{equation}\label{eq:nonSBP6}
  \begin{split}
  &\sum\limits_{l,m=1}^{3}\wk{\kappa}\Tr
  \left(\Qxil{l}\matAlmk{l}{m}{\kappa}+\matAlmk{l}{m}{\kappa}\Qxil{l}\right)\circ\matFxm{m}{\qk{\kappa}}{\qk{\kappa}}\ones{k}\approx\\
  &\frac{1}{2}\sum\limits_{l,m=1}^{3}\int_{\Ohat}\fnc{W}\Tr\left\{\frac{\partial}{\partial\xil{l}}\left(\Jdxildxm{l}{m}\Fxm{m}\right)+\Jdxildxm{l}{m}\frac{\partial\Fxm{m}}{\partial\xil{l}}\right\}\mr{d}\Ohat.
  \end{split}
\end{equation}
 
Next, we demonstrate that \Eq~\eqref{eq:nonSBP1} reduces to a telescoping and consistent approximation to the right-hand side of~\eqref{eq:nonIBP}. 
Transposing the second term in~\eqref{eq:nonSBP1} and using the symmetry of $\matFxm{m}{\qk{\kappa}}{\qk{\kappa}}$ gives
\begin{equation}\label{eq:nonSBP2}
  \begin{split}
  &\bm{lhs}=\sum\limits_{l,m=1}^{3}
  \left\{\wk{\kappa}\Tr\left(\Qxil{l}\matAlmk{l}{m}{\kappa}\right)\circ\matFxm{m}{\qk{\kappa}}{\qk{\kappa}}\ones{k}
  +\ones{k}\Tr\left(\Qxil{l}\Tr\matAlmk{l}{m}{\kappa}\right)\circ\matFxm{m}{\qk{\kappa}}{\qk{\kappa}}\wk{\kappa}\right\}.
  \end{split}
\end{equation}
Using the SBP property $\Qxil{l}\Tr=-\Qxil{l}+\Exil{l}$ yields
\begin{equation}\label{eq:nonSBP3}
  \begin{split}
  \bm{lhs}=&
  \left\{\wk{\kappa}\Tr\left(\Qxil{l}\matAlmk{l}{m}{\kappa}\right)\circ\matFxm{m}{\qk{\kappa}}{\qk{\kappa}}\ones{k}
  -\ones{k}\Tr\left(\Qxil{l}\matAlmk{l}{m}{\kappa}\right)\circ\matFxm{m}{\qk{\kappa}}{\qk{\kappa}}\wk{\kappa}\right\}\\
   &+\sum\limits_{l,m=1}^{3}\wk{\kappa}\Tr\Exil{l}\matAlmk{l}{m}{\kappa}\circ\matFxm{m}{\qk{\kappa}}{\qk{\kappa}}\ones{k}
  \end{split}
\end{equation}
Applying \Theorem~\ref{thrm:telescope} on the first set of terms results in 
\begin{equation}\label{eq:nonSBP4}
  \begin{split}
  \bm{lhs}=&\sum\limits_{l,m=1}^{3}
  \left\{\left(\psixmk{m}{\kappa}\right)\Tr\barQxil{l}\barmatAlmk{l}{m}{\kappa}\barones{\kappa}-\barones{\kappa}\Tr\barQxil{l}\barmatAlmk{l}{m}{\kappa}\psixmk{m}{\kappa}\right\}\\
  &+\sum\limits_{l,m=1}^{3}\wk{\kappa}\Tr\Exil{l}\matAlmk{l}{m}{\kappa}\circ\matFxm{m}{\qk{\kappa}}{\qk{\kappa}}\ones{k}.
  \end{split}
\end{equation}
The first set of terms on the right-hand side are the discrete GCL conditions and are therefore zero. The second set of terms 
is reduced by using the SBP property $\barQxil{l}=-\barQxil{l}\Tr+\barExil{l}$ and the consistency of the derivative operator which implies that 
$\barQxil{l}\barones{\kappa}=0$. Therefore, \eqref{eq:nonSBP4} reduces to 
\begin{equation}\label{eq:nonSBP5}
  \begin{split}
  \bm{lhs}=&\sum\limits_{l,m=1}^{3}\left\{\wk{\kappa}\Tr\Exil{l}\matAlmk{l}{m}{\kappa}\circ\matFxm{m}{\qk{\kappa}}{\qk{\kappa}}\ones{k}
  -\barones{\kappa}\Tr\barExil{l}\barmatAlmk{l}{m}{\kappa}\psixmk{m}{\kappa}
  \right\}.
  \end{split}
\end{equation}
By~\eqref{eq:accExilmetrics} and the exactness properties of $\barExil{l}$, the right-hand side of~\eqref{eq:nonSBP1} 
is an approximation to the right-hand side of~\eqref{eq:nonIBP}, \ie,  
\begin{equation}\label{eq:nonSBP7}
  \begin{split}
  &\bm{lhs}=\sum\limits_{l,m=1}^{3}\left(\wk{\kappa}\Tr\Exil{l}\matAlmk{l}{m}{\kappa}\circ\matFxm{m}{\qk{\kappa}}{\qk{\kappa}}\ones{\kappa}
  -\barones{\kappa}\Tr\barExil{l}\barmatAlmk{l}{m}{\kappa}\psixmk{m}{\kappa}
  \right)\approx\\
  &
  \sum\limits_{l,m=1}^{3}\oint_{\Ghat}\left\{\left(\bfnc{W}\Tr\Fxm{m}-\psi_{\xm{m}}\right)\Jdxildxm{l}{m}\nxil{l}\right\}\mr{d}\Ghat=
  \sum\limits_{l,m=1}^{3}\oint_{\Ghat}\left(\Jdxildxm{l}{m}\fxm{m}\nxil{l}\right)\mr{d}\Ghat.
  \end{split}
\end{equation}
Finally, and of critical importance, the right-hand side of~\eqref{eq:nonSBP1} is in a telescopic form which when combined 
with appropriate interface SATs telescopes to the boundaries of the domain. 

The accuracy of the coupling terms in the SATs is necessary to prove that the scheme is element-wise conservative and 
is given below.
\begin{thrm}\label{thrm:weakcouplingmort}
The coupling matrices constructed using the mortar-element approach satisfy the following accuracy conditions:
\begin{equation}\label{eq:weakcouplingmort}
\begin{split}
&\vk{\kappa}\Tr\barEHtoLm{m}\circ\matFxmscai{m}{\qL}{\qH}{i}\barones{\Nl{\rmH}} = 
\oint_{\Ghat^{\rm{L}}}\fnc{V}\Fxm{m}(i)\Jdxildxm{1}{m}\nxil{1}\mr{d}\Ghat+\mathcal{O}\left(h^{\pL+d}\right)\\
&\vk{\kappa}\Tr\barELtoHm{m}\circ\matFxmscai{m}{\qL}{\qH}{i}\barones{\rmL} = 
\oint_{\Ghat^{\rm{H}}}\fnc{V}\Fxm{m}(i)\Jdxildxm{1}{m}\nxil{1}\mr{d}\Ghat+\mathcal{O}\left(h^{\pL+d}\right),
\end{split}
\end{equation}
where in the current context
\begin{equation*}
  \begin{split}
    &\barEHtoLm{m}\equiv\frac{1}{2}\left\{
      \left(\barRbetal{1}^{\rmL}\right)\Tr\barmatAlmkLH{1}{m}{\rmL}{\Ghat^{\rmL}}{red}\barPorthol{1}^{\rmL}\IHtoL\Ralphal{1}^{\rmH}
      +\left(\barRbetal{1}^{\rmL}\right)\Tr\barPorthol{1}^{\rmL}\IHtoL\barmatAlmkLH{1}{m}{\rmH}{\Ghat^{\rmH}}{red}\Ralphal{1}^{\rmH}
      \right\}\\
    &\barELtoHm{m}\equiv-\frac{1}{2}\left\{\left(\barRalphal{1}^{\rmH}\right)\Tr\barmatAlmkLH{1}{m}{\rmH}{\Ghat^{\rmH}}{red}\barPorthol{1}^{\rmH}\ILtoH\Rbetal{1}^{\rmL}
      +\left(\barRbetal{1}^{\rmL}\right)\Tr\barPorthol{1}^{\rmH}\IHtoL\barmatAlmkLH{1}{m}{\rmH}{\Ghat^{\rmH}}{red}\Rbetal{1}^{\rmL}
      \right\}
  \end{split}
\end{equation*}
\end{thrm}
\begin{proof}
The proof follows identically to that given in Thrm.~\ref{thrm:accExil}.
\end{proof}
\section{Proof of Theorem.~\ref{thrm:accDxil}}\label{proof:thrm:accDxil}
In this appendix, we prove that the nonlinear approximations have the following error properties: 
\begin{equation*}
\begin{split}
&\left(2\Dxil{l}^{\kappa}\left[\fnc{A}\right]_{\kappa}\right)\circ\matFxm{m}{\qk{\kappa}}{\qk{\kappa}}\ones{\kappa}=
\left(\frac{\partial\left(\fnc{A}\bfnc{F}_{\xm{m}}\right)}{\partial \xil{l}}\right)\left(\bm{\xi}_{\kappa}\right)
+\left(\Fxm{m}\frac{\partial\fnc{A}}{\partial\xil{l}}\right)\left(\bm{\xi}_{\kappa}\right)
+\mathcal{O}\left(h^{p+1}\right),\\
&\left(2\left[\fnc{A}\right]_{\kappa}\Dxil{l}^{\kappa}\right)\circ\matFxm{m}{\qk{\kappa}}{\qk{\kappa}}\ones{\kappa}=
\left(\fnc{A}\frac{\partial\Fxm{m}}{\partial\xil{l}}\right)\left(\bm{\xi}_{\kappa}\right)+\mathcal{O}\left(h^{p+1}\right).
\end{split}
\end{equation*}
The first error estimate is not intuitive and is the focus of this appendix, while the second error estimate follows 
directly from the error estimate of the nonlinear approximation $\Dxil{l}\circ\matFxm{m}{\qk{\kappa}}{\qk{\kappa}}\ones{\kappa}$, 
which has been derived by several authors (for example see~\cite{Crean2018} \Theorem $1$). The approach that is taken 
is to examine, point-wise, what the action of the derivative operator is. Then a careful examination of the derivative of the 
two-point flux function, taking advantage of its consistency and symmetry, reveals the final error estimate. 

Note that the dyadic flux vector valued function that is a function of two vector valued arguments, which is represented in a 
generic fashion as $\fxmsc{m}{\QL}{\QR}$ (note that the Tadmor shuffle condition~\eqref{eq:shuffle} is not required here). Starting with the first 
equality, 
\begin{equation}\label{eq:Daccone}
\begin{split}
&\left(\left(2\Dxil{l}^{\kappa}\matAk{\kappa}\right)\circ\matFxm{m}{\qk{\kappa}}{\qk{\kappa}}\ones{\kappa}\right)(i)=
\sum\limits_{j=1}^{N_{\kappa}}2\Dxil{l}^{\kappa}(i,j)\fnc{A}(j)\fxmsc{m}{\qki{\kappa}{(i)}}{\qki{\kappa}{(j)}}\\
&=2\left(\frac{\partial\left(\fnc{A}\fxmsc{m}{\QL}{\QR}\right)}{\partial \xil{l}}\right)(\QL=\QR=\qki{\kappa}{(i)})
+\mathcal{O}\left(h^{p+1}\right)\\
&=2\left(\fnc{A}\frac{\partial\fxmsc{m}{\QL}{\QR}}{\partial \xil{l}}\right)\left(\QL=\QR=\qki{\kappa}{(i)}\right)
+2\left(\fxmsc{m}{\QL}{\QR}\frac{\partial\fnc{A}}{\partial \xil{l}}\right)\left(\QL=\QR=\qki{\kappa}{(i)}\right)\\
&+\mathcal{O}\left(h^{p+1}\right)\\
&=2\left(\fnc{A}\frac{\fxmsc{m}{\QL}{\QR}}{\partial \QR}\frac{\partial\QR}{\partial\xil{l}}\right)\left(\QL=\QR=\qki{\kappa}{(i)}\right)
+2\left(\Fxm{m}\frac{\partial\fnc{A}}{\partial \xil{l}}\right)\left(\QL=\QR=\qki{\kappa}{(i)}\right)\\
&+\mathcal{O}\left(h^{p+1}\right)\\
\end{split}
\end{equation}
where the following are used: 1) $\Dxil{l}^{\kappa}$ is a degree $p$ differentiation operator and therefore of order $p+1$ (note that this occurs because the derivative 
that is being approximated is in computational space, see Appendix~\ref{sec:polyorder} for a thorough discussion), and 
2) $\fxmsc{m}{\Q}{\Q}=\Fxm{m}$. Now 
\begin{equation}\label{eq:symmetry}
\begin{split}
\frac{\partial\Fxm{m}\left(\Q\right)}{\partial\xil{l}}=&\frac{\partial\fxmsc{m}{\Q}{\Q}}{\partial\xil{l}}
=\left(\frac{\partial\fxmsc{m}{\QL}{\QR}}{\partial\xil{l}}\right)\left(\QL=\QR=\Q\right)\\\\
=&\left(
\frac{\partial\fxmsc{m}{\QL}{\QR}}{\partial\QL}\frac{\partial\QL}{\partial\xil{l}}
+\frac{\partial\fxmsc{m}{\QL}{\QR}}{\partial\QR}\frac{\partial\QR}{\partial\xil{l}}
\right)\left(\QL=\QR=\Q\right).
\end{split}
\end{equation}
It is now shown that
\begin{equation*}
\left(\frac{\partial\fxmsc{m}{\QL}{\QR}}{\partial\QL}\right)\left(\QL=\QR=\Q\right)
=\left(\frac{\partial\fxmsc{m}{\QL}{\QR}}{\partial\QR}\right)\left(\QL=\QR=\Q\right).
\end{equation*}
For the term on the left of the equality,
\begin{equation*}
  \begin{split}
\frac{\partial\fxmsc{m}{\QL}{\QR}}{\partial\QL}\left(\QL=\QR=\Q\right) &= 
\left(\lim_{\Delta\Q\rightarrow\bm{0}}\frac{\fxmsc{m}{\Q+\Delta\Q}{\QR}}{\Delta \Q}\right)
\left(\QL=\QR=\Q\right)\\
&=\lim_{\Delta\Q\rightarrow\bm{0}}\frac{\fxmsc{m}{\Q+\Delta\Q}{\Q}}{\Delta \Q}.
  \end{split}
\end{equation*}
For the term on the right of the equality,
\begin{equation*}
\begin{split}
\left(\frac{\partial\fxmsc{m}{\QL}{\QR}}{\partial\QR}\right)\left(\QL=\QR=\Q\right) &= 
\left(
\lim_{\Delta\Q\rightarrow\bm{0}}\frac{\fxmsc{m}{\QL}{\QR+\Delta\Q}}{\Delta \Q}
\right)\left(\QL=\QR=\Q\right)\\
&=\lim_{\Delta\Q\rightarrow\bm{0}}\frac{\fxmsc{m}{\Q}{\Q+\Delta\Q}}{\Delta \Q}\\\\
&=\lim_{\Delta\Q\rightarrow\bm{0}}\frac{\fxmsc{m}{\Q+\Delta\Q}{\Q}}{\Delta \Q},
\end{split}
\end{equation*}
where the last equality results from the symmetry of $\bm{f}_{\xm{m}}^{sc}$; thus, \eqref{eq:symmetry} becomes
\begin{equation}\label{eq:symmetrytwo}
\begin{split}
\frac{\partial\Fxm{m}\left(\Q\right)}{\partial\xil{l}}=&\frac{\partial\fxmsc{m}{\Q}{\Q}}{\partial\xil{l}}
=\left(\frac{\partial\fxmsc{m}{\QL}{\QR}}{\partial\xil{l}}\right)\left(\QL=\QR=\Q\right)\\\\
=&2\left(\frac{\partial\fxmsc{m}{\QL}{\QR}}{\partial\QL}\frac{\partial\QL}{\partial\xil{l}}\right)\left(\QL=\QR=\Q\right)
=2\left(\frac{\partial\fxmsc{m}{\QL}{\QR}}{\partial\QR}\frac{\partial\QR}{\partial\xil{l}}\right)\left(\QL=\QR=\Q\right).
\end{split}
\end{equation}
Therefore, by~\eqref{eq:symmetrytwo}, \eqref{eq:Daccone} reduces to
\begin{equation}\label{eq:Dacctwo}
\begin{split}
\left(2\Dxil{l}^{\kappa}\matAk\circ\matFxm{m}{\qk{\kappa}}{\qk{\kappa}}\ones{\kappa}\right)(i)&=
\left(\fnc{A}\frac{\partial\Fxm{m}}{\partial \xil{l}}\right)\left(\Q=\qki{\kappa}{(i)}\right)
+\left(2\Fxm{m}\frac{\partial\fnc{A}}{\partial\xil{l}}\right)\left(\Q=\qki{\kappa}{(i)}\right)
+\mathcal{O}\left(h^{p+1}\right)
\\\\
&=\left(\frac{\partial\fnc{A}\Fxm{m}}{\partial \xil{l}}\right)\left(\Q=\qki{\kappa}{(i)}\right)
+\left(\Fxm{m}\frac{\partial\fnc{A}}{\partial\xil{l}}\right)\left(\Q=\qki{\kappa}{(i)}\right)
+\mathcal{O}\left(h^{p+1}\right).
\end{split}
\end{equation}

The second equality in~\eqref{acc:Hadamard} follows in a similar manner.
\section{On order of polynomial exactness}\label{sec:polyorder}
In this section, a careful analysis is undertaken of the relation between polynomial exactness and order.
To do so, it is convenient to 
examine the accuracy of a degree $p$ approximation of the $\xi$ derivative in one dimension using 
a degree $p$ one-dimensional SBP operator $\DxiloneD{1}$ on the $N$ nodes $\bm{\xi}_{1}$ generated by a linear transformation
\begin{equation}\label{eq:afine}
\xm{1}(\xil{1}) = \frac{h}{2}\xil{1}+\frac{\xR+\xL}{2},
\end{equation}
where $h\equiv\xR-\xL$. The approximation 
at the $i\Th$ node of the derivative of the function $\fnc{F}$ is given by 
\begin{equation*}
\frac{\partial\fnc{F}}{\partial\xi}\left(\bmxi{i}\right)\approx\left(\DxiloneD{1}\bm{f}\right)(i)=
\sum\limits_{j=1}^{N}\DxiloneD{1}(i,j)\bm{f}(j).
\end{equation*}
The error at the $i^{\mathrm{th}}$ node is found by taking the difference between the approximation and the 
exact derivative, giving 
\begin{equation*}
\bm{\mathrm{error}}(\bmxi{i})=\sum\limits_{j=1}^{n}\DxiloneD{l}(i,j)\bm{f}(j)-\frac{\partial\fnc{F}}{\partial\xil{1}}\left(\bmxi{i}\right).
\end{equation*}
Expanding $\fnc{F}$ and its derivative about $\xil{1}=0$ via Taylor series, inserting the result into the above, and using the 
fact that $\DxiloneD{l}$ is degree $p$, after some algebra, results in 
 \begin{equation}\label{eq:error}
\begin{split}
\sum\limits_{j=1}^{N}\DxiloneD{1}(i,j)\bm{f}(j)-\frac{\partial\fnc{F}}{\partial\xil{l}}\left(\bmxi{i}\right)=&
\sum\limits_{j=1}^{N}\DxiloneD{1}(i,j)\sum\limits_{k=0}^{\infty}\frac{\partial^{k}\fnc{F}}{\partial \xil{1}^{k}}\left(\xil{1}=0\right)
\frac{\left(\bmxi{j}\right)^{k}}{k!}
-\sum\limits_{k=0}^{\infty}\frac{\partial^{k+1}\fnc{F}}{\partial\xil{1}^{k+1}}\left(\xil{1}=0\right)\frac{\left(\bmxi{i}\right)^{k}}{k!}
\\
=&\sum\limits_{k=p}^{\infty}\frac{\partial^{k+1}\fnc{F}}{\partial\xil{1}^{k+1}}\left(\xil{1}=0\right)
\left(\sum\limits_{j=1}^{N}\DxiloneD{1}(i,j)\frac{\left(\bmxi{j}\right)^{k+1}}{(k+1)!}-\frac{\left(\bmxi{i}\right)^{k}}{k!}\right).
\end{split}
\end{equation}
To obtain an error estimate from~\eqref{eq:error}, it is necessary to introduce the element size, h, and expand the partials in terms of 
the physical coordinate $\xm{1}$. This is accomplished by take take advantage of the Fa\'a di Bruno formula~\cite{Johnsor2002}
\begin{equation*}
\frac{\partial^{k}\fnc{F}\left(\xm{1}\left(\xil{1}\right)\right)}{\partial\xil{1}^{k}}=
\sum\limits_{m=1}^{k}\frac{\partial^{m}\fnc{F}}{\partial\xm{1}^{m}}
\fnc{B}_{k,m}\left(\frac{\partial\xm{1}}{\partial\xil{1}},\dots,\frac{\partial^{k-m+1}\xm{1}}{\partial \xil{1}^{k-m+1}}\right).
\end{equation*}
The Bell polynomials, $\fnc{B}_{k,m}$, are given by
\begin{equation*}
\begin{split}
&\fnc{B}_{k,m}\left(\frac{\partial\xm{1}}{\partial\xil{1}},\dots,\frac{\partial^{k-m+1}\xm{1}}{\partial \xil{1}^{k-m+1}}\right)\equiv\\
&\sum\frac{k!}{j_{1}!j_{2}!\dots j_{k-m+1}!}\left(\frac{\partial\xm{1}}{\partial\xil{1}}\frac{1}{1!}\right)^{j_{1}}
\left(\frac{\partial^{2}\xm{}1}{\partial\xil{1}^{2}}\frac{1}{2!}\right)^{j_{2}}\dotsm
\left(\frac{\partial^{k-m+1}\xm{1}}{\partial\xil{1}^{k-m+1}}\frac{1}{(k-m+1)!}\right)^{j_{k-m+1}},
\end{split}
\end{equation*}
where the sum is over all positive solutions to 
\begin{equation*}
\begin{split}
\sum\limits_{i=1}^{k-m+1}j_{i}=m,\qquad\sum\limits_{i=1}^{k-m+1}ij_{i}=k.
\end{split}
\end{equation*}
By~\eqref{eq:afine} and the definition of the Bell polynomials, \eqref{eq:error} reduces to
 \begin{equation*}
\sum\limits_{j=1}^{n}\DxiloneD{1}(i,j)\bm{f}(j)-\frac{\partial\fnc{F}}{\partial\xi}\left(\bmxi{i}\right)=
\sum\limits_{k=p}^{\infty}\frac{\partial^{k+1}\fnc{F}}{\partial\xm{1}^{k+1}}\left(\xil{1}=0\right)\left(\frac{h}{2}\right)^{k+1}
\left(\sum\limits_{j=1}^{n}\DxiloneD{1}(i,j)\frac{\left(\bmxi{j}\right)^{k+1}}{(k+1)!}-\frac{\left(\bmxi{i}\right)^{k}}{k!}\right).
 \end{equation*}
The leading truncation error is of order $\mathcal{O}(h^{p+1})$, which is a natural result since the PDE 
itself is scaled by the Jacobian $\fnc{J}\propto\mathcal{O}(h)$. Extrapolating, for problems in $d$ dimensions, 
a degree $p$ differentiation operator has the following error properties:
\begin{equation*}
\Dxil{1}\bm{f}=\frac{\partial\fnc{F}}{\partial\xil{1}}\left(\bm{\xi}\right)+\mathcal{O}\left(h^{p+1}\right).
\end{equation*}
Using a similar analysis, it can be concluded that the degree $r$
interpolation operators that for general $\mat{R}$ (the ones used in the main paper are exact) have error properties
\begin{equation*}
\Ralphal{l}\bm{f}=\fnc{F}\left(\bm{\xi}_{\Ghat^{\alphal{l}}}\right)+\mathcal{O}\left(h^{r+1}\right),\qquad
\Rbetal{l}\bm{f}=\fnc{F}\left(\bm{\xi}_{\Ghat^{\betal{l}}}\right)+\mathcal{O}\left(h^{r+1}\right)
\end{equation*}
where $\bm{\xi}_{\Ghat^{\alphal{l}}}$ and $\bm{\xi}_{\Ghat^{\betal{l}}}$ are the face nodes on the $\alphal{l}$ and $\betal{l}$ surfaces, respectively. 
From the above 
discussion, it is now possible to relate polynomial exactness to order and these relations are used in developing 
error estimates for the nonlinear approximations.

In addition, it is necessary to understand the scaling effect on order of the metric Jacobian and the metric terms; these can be characterized as
\begin{equation}\label{eq:scaling}
\fnc{J}\propto\mathcal{O}\left(h^{d}\right),\qquad\Jdxildxm{l}{m}\propto\mathcal{O}\left(h^{d-1}\right).
\end{equation}
\begin{remark}
One way of seeing how the scaling in the Jacobian and the metric terms arises in a proper mesh refinement sequence, is to break 
the curvilinear coordinate transformation into two steps. In the first, there is an affine transformation from the 
child element to the parent element (this is where the scaling shows up); the second transformation is a curvilinear 
transformation from the parent element to physical space.
\end{remark}
\section{Proof of Thrm.~\ref{thrm:accExil}}\label{proof:thrm:accExil}
It is shown how to construct the first estimate, as the second follows in a similar manner. Using the 
decomposition of the surface matrix gives
\begin{equation}\label{eq:accEdecompose}
\begin{split}
&\vk{\kappa}\Tr\left\{\left(\barExil{l}^{\kappa}\barmatAk{\kappa}\right)\circ\matFxmscai{m}{\qk{\kappa}}{\qk{\kappa}}{i}\barones{\kappa}\right\}= 
\vk{\kappa}\Tr\left\{\left(\left(\barRbetal{l}^{\kappa}\right)\Tr\barPorthol{l}^{\kappa}\barRbetal{l}^{\kappa}\barmatAk{\kappa}\right)\circ\matFxmscai{m}{\qk{\kappa}}{\qk{\kappa}}{i}\barones{\kappa}\right\}\\
&-\vk{\kappa}\Tr\left\{\left(\left(\barRalphal{l}^{\kappa}\right)\Tr\barPorthol{l}^{\kappa}\barRalphal{l}^{\kappa}\barmatAk{\kappa}\right)\circ\matFxmscai{m}{\qk{\kappa}}{\qk{\kappa}}{i}\barones{\kappa}\right\}.
\end{split}
\end{equation}
Concentrating on the first term in the right-hand side of~\eqref{eq:accEdecompose}

\begin{equation*}
\begin{split}
&\vk{\kappa}\Tr\left\{\left(\left(\barRbetal{l}^{\kappa}\right)\Tr\barPorthol{l}^{\kappa}\barRbetal{l}^{\kappa}\barmatAk{\kappa}\right)\circ\matFxmscai{m}{\qk{\kappa}}{\qk{\kappa}}{i}\barones{\kappa}\right\}=\\
&=\sum\limits_{a=1}^{\Nl{\kappa}}\sum\limits_{b=1}^{\Nl{\kappa}}
\vk{\kappa}(a)\left(\left(\barRbetal{l}^{\kappa}\right)\Tr\barPorthol{l}^{\kappa}\barRbetal{l}^{\kappa}\barmatAk{\kappa}\right)(a,b)\fxmsc{m}{\qki{\kappa}{(a)}}{\qki{\kappa}{(b)}}(i)\\
&=\sum\limits_{a=1}^{\Nl{\kappa}}\sum\limits_{c=1}^{\Nl{\Ghat^{\betal{l}}}}
\vk{\kappa}(a)\left(\left(\barRbetal{l}^{\kappa}\right)\Tr\barPorthol{l}^{\kappa}\right)(a,c)
\sum\limits_{b=1}^{\Nl{\kappa}}
\barRalphal{l}^{\kappa}(c,b)\barmatAk{\kappa}(b,b)
\fxmsc{m}{\qki{\kappa}{(a)}}{\qki{\kappa}{(b)}},
\end{split}
\end{equation*}
with $\Nl{\Ghat^{\betal{l}}}$ the number of nodes on face $\Ghat^{\betal{l}}$.

The interpolation operator is of degree $r$ and therefore of order $r+1$, thus,
\begin{equation*}
\begin{split}
&\vk{\kappa}\Tr\left\{\left(\left(\barRbetal{l}^{\kappa}\right)\Tr\barPorthol{l}^{\kappa}\barRbetal{l}^{\kappa}\barmatAk{\kappa}\right)\circ\matFxmscai{m}{\qk{\kappa}}{\qk{\kappa}}{i}\barones{\kappa}\right\}=\\
&\sum\limits_{a=1}^{\Nl{\kappa}}\sum\limits_{c=1}^{\Nl{\Ghat^{\betal{l}}}}
\vk{\kappa}(a)\left(\left(\barRalphal{l}^{\kappa}\right)\Tr\barPorthol{l}^{\kappa}\right)(a,c)
\fnc{A}(\bm{\xi}^{(c)})
\fxmsc{m}{\qki{\kappa}{(a)}}{\Q\left(\bm{\xi}^{(c)}}\right)
+\mathcal{O}\left(h^{r+1}\right),
\end{split}
\end{equation*}
where $\fnc{A}\left(\bm{\xi}^{c}\right)$ and $\Q\left(\bm{\xi}^{(c)}\right)$ are $\fnc{A}$ and $\bfnc{Q}$ 
evaluated at the $c\Th$ node on surface $\Ghat^{\betal{l}}$ of element $\kappa$. Continuing, 
\begin{equation*}
\begin{split}
&\vk{\kappa}\Tr\left\{\left(\left(\barRbetal{l}^{\kappa}\right)\Tr\barPorthol{l}^{\kappa}\barRbetal{l}^{\kappa}\barmatAk{\kappa}\right)\circ\matFxmscai{m}{\qk{\kappa}}{\qk{\kappa}}{i}\barones{\kappa}\right\}=\\
&\sum\limits_{c=1}^{\Ghat^{\betal{l}}}\sum\limits_{d=1}^{\Ghat^{\betal{l}}}
\barPorthol{l}^{\kappa}(d,c)\left(\fnc{A}\right)\left(\bm{\xi}^{(c)}\right)\sum\limits_{a=1}^{\Nl{\kappa}}\barRbetal{l}(d,a)
\vk{\kappa}(a)\fxmsc{m}{\qki{\kappa}{(a)}}{\Q\left(\bm{\xi}^{(c)}\right)}
+\mathcal{O}\left(h^{r+1}\right).
\end{split}
\end{equation*}
By the accuracy of the interpolation operator (\ie, in the general case $\overline{\mat{R}}$ is assumed to be of degree $r$ 
and therefore order $r+1$), noting that $\barPorthol{l}^{\kappa}$ is a diagonal matrix, and 
using the fact that $\fxmsc{m}{\qki{\kappa}{(j)}}{\qki{\kappa}{(j)}}=\Fxm{m}\left(\qki{\kappa}{(j)}\right)$, \ie the $j^{\rm{th}}$ entry in $\Fxm{m}$ evaluated at $\qki{\kappa}{(j)}$,
\begin{equation*}
\begin{split}
&\vk{\kappa}\Tr\left\{\left(\left(\barRbetal{l}^{\kappa}\right)\Tr\barPorthol{l}^{\kappa}\barRbetal{l}^{\kappa}\barmatAk{\kappa}\right)\circ\matFxmscai{m}{\qk{\kappa}}{\qk{\kappa}}{i}\barones{\kappa}\right\}=\\
&\sum\limits_{c=1}^{\Nl{\Ghat^{\betal{l}}}}\barPorthol{l}(c,c)\vk{\kappa}(c)\fnc{A}\left(\bm{\xi}^{(c)}\right)\left(\Fxm{m}\left(\Q\left(\bm{\xi}^{(c)}\right)\right)\right)(i)
+\mathcal{O}\left(h^{r+1}\right).
\end{split}
\end{equation*}
Noting that $\barPorthol{l}$ is a degree $\pP$ approximation to the $L^{2}$ inner 
product over planes orthogonal to $\xil{l}$,
\begin{equation*}
\begin{split}
&\vk{\kappa}\Tr\left\{\left(\left(\barRbetal{l}^{\kappa}\right)\Tr\barPorthol{l}^{\kappa}\barRbetal{l}^{\kappa}\barmatAk{\kappa}\right)\circ\matFxmscai{m}{\qk{\kappa}}{\qk{\kappa}}{i}\barones{\kappa}\right\}=
\oint_{\Ghatk^{\betal{l}}}\fnc{V}\fnc{A}(\Fxm{m})(i)\nxil{l}\mr{d}\Ghat
+\max\left[\mathcal{O}\left(h^{\pP+1}\right),\mathcal{O}\left(h^{r+1}\right)\right].
\end{split}
\end{equation*}
Similarly, 
\begin{equation*}
\begin{split}
&\vk{\kappa}\Tr\left\{\left(\left(\barRalphal{l}^{\kappa}\right)\Tr\barPorthol{l}^{\kappa}\barRalphal{l}^{\kappa}
\barmatAk{\kappa}\right)\circ\matFxmscai{m}{\qk{\kappa}}{\qk{\kappa}}{i}\barones{\kappa}\right\}=
\oint_{\Ghatk^{\alphal{l}}}\fnc{V}\fnc{A}(\Fxm{m})(i)\nxil{l}\mr{d}\Ghat
+\max\left[\mathcal{O}\left(h^{\pP+1}\right),\mathcal{O}\left(h^{r+1}\right)\right].
\end{split}
\end{equation*}
Therefore, via the additive property of integrals, the first equality in~\eqref{eq:accEdecompose} is obtained.

\section{Element-wise conservation}\label{sec:gen_element_wise_conservation}
The non-linear hyperbolic nature of the Euler equations means that in finite 
time non-smooth solutions can result despite being closed with smooth data. To allow for non-smooth solutions, 
it is necessary to consider the weak form of the conservation law

\begin{equation}\label{eq:Eulerweak}
\begin{split}
&\int_{t=0}^{T}\int_{\Ohatk}\left(\bfnc{Q}\frac{\partial\fnc{V}}{\partial t}\Jk+
\sum\limits_{l,m=1}^{3}\Jdxildxm{l}{m}\Fxm{m}\frac{\partial\fnc{V}}{\partial \xil{l}}\right)\mr{d}\Ohat\mr{d}t
-\int_{\Ohatk}\left.\fnc{V}\bfnc{Q}\Jk\right|_{t=0}^{T}\mr{d}\Ohat\\
&-\int_{t=0}^{T}\oint_{\Ghatk}\fnc{V}\sum\limits_{l,m=1}^{3}\Jdxildxm{l}{m}\Fxm{m} \nxil{l}\mr{d}\Ghat\mr{d}t=\bm{0},\\ 
&t>0,\kappa=1,2,\dots,K,
\end{split}
\end{equation}
for all smooth test functions $\fnc{V}$ with compact support.

The weak form supports a restricted class of discontinuous 
solutions which satisfy the jump conditions~\cite{Lax1973}

\begin{equation*}
v[[\bfnc{Q}]]=[[\bfnc{F}_{n}]],
\end{equation*}
where $v$ is the speed of the discontinuity, $\bfnc{F}_{n}$ is the flux normal to the discontinuity, 
and $[[\bfnc{V}]]$ is the jump in $\bfnc{V}$ across the 
discontinuity. The interest is in numerically approximating this restricted set of discontinuous 
solutions satisfying the above jump conditions. Thus, the class of schemes that are of 
interest are those that are a consistent approximation to~\eqref{eq:Eulerweak} for non-smooth 
solutions almost everywhere (consistent for smooth solutions everywhere); this is an essential 
property for demonstrating that if the numerical solution converges then it converges to a solution 
satisfying the weak form almost everywhere~\cite{Lax1960}.
 
In this report, a method of lines approach is used and the focus is on the analysis of the semi-discrete 
equations; thus, rather than use~\eqref{eq:Eulerweak}, conservation 
is discussed in the context of~\eqref{eq:Eulerweak2}.  
\begin{equation}\label{eq:Eulerweak2}
\int_{\Ohatk}\fnc{V}\frac{\partial\bfnc{Q}}{\partial t}\Jk\mr{d}\Ohat
-\int_{\Ohatk}\sum\limits_{l,m=1}^{3}\Jdxildxm{l}{m}\Fxm{m}\frac{\partial\fnc{V}}{\partial \xil{l}}\mr{d}\Ohat
+\oint_{\Ghatk}\fnc{V}\sum\limits_{l,m=1}^{3}\Jdxildxm{l}{m}\Fxm{m} \nxil{l}\mr{d}\Ghat=\bm{0}.
\end{equation}

Conservation of the fully discrete scheme can always be achieved with an appropriate choice of 
time integration scheme, \eg, Euler implicit or explicit.

At its core, the analysis that is used relies on a semi-discrete version of the Lax-Wendroff Thrm.~\cite{Lax1960}. To use this theorem, 
it is necessary to express the scheme in telescoping flux form, which in one dimension,  
at node $j$ over a control volume $\left[x_{j-\frac{1}{2}},x_{j+\frac{1}{2}}\right]$ is given as
\begin{equation*}
\frac{\mr{d}q_{j}}{\mr{d}t} = -\frac{\left(f_{j+\frac{1}{2}}-f_{j-\frac{1}{2}}\right)}{\Delta x},
\end{equation*}
where $f$ is a general flux function at the boundaries of the control volume. The importance of this form 
can be seen by viewing it as a finite-volume discretization and the link to the integral form is immediate, recalling that 
the integral and weak forms are equivalent. 
The Lax-Wendroff Thrm.~\cite{Lax1960} proceeds instead by multiplying by a continuous test function and using summation-by-parts. Assuming that 
the general flux function is reasonably well-behaved, one can show that the limit solution is a solution to  
the weak form almost everywhere~\cite{Lax1960,LeVeque1992}. It is clear then that an essential feature of the analysis 
is to demonstrate that the scheme can be algebraically manipulated into a form that is a consistent approximation 
to the weak form~\eqref{eq:Eulerweak2}.

For the purpose of analysis, a general form of the semidiscrete equations for the $\kappa\Th$ element is introduced,
\begin{equation}\label{eq:gen}
    \begin{split}
    \matJk{\kappa}\frac{\mr{d}\qk{\kappa}}{\mr{d}t}&+
    \sum\limits_{l,m=1}^{3}\left(\Dxil{l}^{\kappa}\matAlmk{l}{m}{\kappa}+\matAlmk{l}{m}{\kappa}\Dxil{l}^{\kappa}\right)\circ
    \matFxm{m}{\qk{\kappa}}{\qk{\kappa}}\ones{\kappa}=\\\
    &\left(\M^{\kappa}\right)^{-1}\left\{\sum\limits_{l,m=1}^{3}\left(\Exil{l}^{\kappa}\matAlmk{l}{m}{\kappa}\right)\circ
    \matFxm{m}{\qk{\kappa}}{\qk{\kappa}}\ones{\kappa}
    +\sum\limits_{f=1}^{6}\sum\limits_{m=1}^{3}\Efm{f}{\kappa}{f}{m}\circ\matFxm{m}{\qk{\kappa}}{\qk{f}}\ones{f}\right\},
    \end{split}
\end{equation}
where $\Efm{f}{\kappa}{f}{m}$ are the coupling matrices acting on the six faces, ($f$) with orthogonal coordinate direction $\xil{f}$, of the hexahedral element in the three Cartesian directions ($m$), 
and $\qk{f}$ is the solution from the element touching face $f$. 
In the simple example in the paper, these are the matrices $-\ELtoHm{m}$ and $\EHtoLm{m}$ (note that the $\nxil{l}$ component 
of the unit normal has been absorbed into the definition of $\Efm{f}{\kappa}{f}{m}$).

Moreover, for the conservation proofs it is convenient to introduce the scalar version of the semidiscrete equations for the $\kappa\Th$ element is introduced,
\begin{equation}\label{eq:DEulerCCSsecond}
    \begin{split}
    \barmatJk{\kappa}\frac{\mr{d}\qki{\kappa}{[i]}}{\mr{d}t}&+
    \sum\limits_{l,m=1}^{3}\left(\barDxil{l}^{\kappa}\barmatAlmk{l}{m}{\kappa}+\barmatAlmk{l}{m}{\kappa}
    \barDxil{l}^{\kappa}\right)\circ
    \matFxmi{m}{\qk{\kappa}}{\qk{\kappa}}{[i]}\barones{\kappa}=\\\
    &\left(\barM^{\kappa}\right)^{-1}\left\{\sum\limits_{l,m=1}^{3}\left(\barExil{l}^{\kappa}\barmatAlmk{l}{m}{\kappa}\right)\circ
    \matFxmi{m}{\qk{\kappa}}{\qk{\kappa}}{[i]}\barones{\kappa}
    +\sum\limits_{f=1}^{6}\sum\limits_{m=1}^{3}\barEfm{f}{\kappa}{f}{m}\circ
    \matFxmi{m}{\qk{\kappa}}{\qk{f}}{[i]}\barones{f}\right\},\quad i = 1,\dots,5.
    \end{split}
\end{equation}
In this report, element-wise conservation is proven by demonstrating that the scheme has a telescoping 
flux form at the element level. Recently, Shi and Shu~\cite{Shi2018} have presented an extension of the Lax-Wendroff Thrm. to 
consider element-wise conservation for general multidimensional discretizations. We rely on the proofs in that paper 
and therefore need only show that the semi-discrete equations satisfy the following:
\begin{itemize}
\item Telescoping form: That the scheme can be algebraically manipulated into a general telescoping flux form 
at the element level given as
\begin{equation}\label{eq:gencon}
\frac{\mr{d}\qbark}{\mr{d}t}+\sum\limits_{f=1}^{6}\gfk{f}{\kappa} = 0,
\end{equation}
where $\qbark$ is a generalized locally conserved quantity and $\gfk{f}{\kappa}$ 
is a generalized flux on the $f$ face of the $\kappa\Th$ hexahedral element. In order to 
telescope, the fluxes must be uniquely defined at each surface. Thus, for the two element 
example, $\gL=-\gH$. 
\item Consistency: For a constant flow $\bfnc{Q}=\Qc$,
\begin{equation}\label{eq:consitency}
\begin{split}
&\qbark=\left(\int_{\Ohatk}\Jk\mr{d}\Ohat+\mathcal{O}\left(h\right)\right)\Qc,\\
&\gfk{f}{\kappa} =\left(\sum\limits_{m=1}^{3}\oint_{\Ghatk^{f}}\Jdxildxm{l}{m}\nxil{l}\mr{d}\Ghat
+\mathcal{O}\left(h\right)\right)\Fxm{m}\left(\Qc\right)
\end{split}
\end{equation}
where $\Ghatk^{f}$ is the $f\Th$ face of element $\kappa$, and 
where in contrast to Shi and Shu~\cite{Shi2018}, error terms have been added to account for the 
curvilinear coordinate transformation (this results because, in general, the curvilinear coordinate 
transformation bumps terms in the discretization outside of the polynomial space that can be resolved 
by the discrete integrals); this addition has no impact on the proofs presented in~\cite{Shi2018}.
\item Boundedness: the generalized conserved quantity and fluxes are bounded in terms of the $L^{\infty}$ 
norm of the numerical solution:
\begin{equation}\label{eq:bounded}
\begin{split}
&\left|\qbark(i)-\vbark(i)\right|\leq Ch^{d}\|\qh-\vh\|_{L^{\infty}(\Ballk)},\\
&\left|(\gfk{f}{\kappa}\left(\qh\right))(i)-(\gfk{f}{\kappa}\left(\vh\right))(i)\right|\leq 
Ch^{d-1}\|\qk{\kappa}-\vk{\kappa}\|_{L^{\infty}(\Ballk)},\\
\end{split}
\end{equation}
where $\qh$ and $\vh$ are numerical solutions over the entire mesh and $C$ is some positive constant. Moreover, 
$\Ballk\equiv\left\{\bm{x}\in\mathbb{R}^{d}:\,\left|\bm{x}-\bm{x}_{c}\right|< ch\right\}$, $\bm{x}_{c}$ is 
the element center, and $c$ ($>1$) is independent of the mesh size. Note that the $h^{d}$ and $h^{d-1}$ scaling 
originate from the metric Jacobian and metric terms, respectively.
\item Global conservation:
\begin{equation}\label{eq:globalC}
\sum\limits_{\kappa=1}^{K}\qbark(i) = \int_{\Omega}\fnc{Q}(i)\mr{d}\Omega+\mathcal{O}\left(h\right),\qquad i = 1,\dots,5,
\end{equation}
where again, a discretization error has been introduced for the above stated reasons and again this 
has no impact on the proofs in Shi and Shu~\cite{Shi2018}.
\end{itemize}
Now, a theorem is present that delineates the conditions that need to be satisfied by the 
semi-discrete equations so that element-wise conservation is obtained.
\begin{thrm}\label{thrm:genelement}
If the coupling matrices, for example $\EHtoLm{m}$, in the SATs on either side of a given 
interface are the negative transpose of each other, \eg,
\begin{equation*}
\EHtoLm{m}=-\left(\ELtoHm{m}\right)\Tr,
\end{equation*}
 then the semi-discrete 
form~\eqref{eq:gen} can be algebraically manipulated into the general element-wise 
telescoping form~\eqref{eq:gencon} where
\begin{equation*}
\begin{split}
&\qbark(i) \equiv \barones{\kappa}\Tr\barM\barmatJk{\kappa}\qki{\kappa}{[i]}\\
&\gfk{f}{\kappa}(i)\equiv\sum\limits_{m=1}^{3}\barEfm{f}{\kappa}{f}{m}\circ\matFxmscai{m}{\qk{\kappa}}{\qk{f}}{i}\barones{f},\\
&i=1,\dots,5.
\end{split}
\end{equation*}
The scheme is element-wise conservative if, in addition, for a constant state $\Qc$ the coupling terms satisfy
\begin{equation}\label{eq:consistencycoupling}
\begin{split}
&\barones{\kappa}\Tr\barEfm{f}{\kappa}{f}{m}\circ\circ\matFxmscai{m}{\qk{\kappa}}{\qk{f}}{i}\barones{f}=
\left(\oint_{\Ghatk^{f}}\Jdxildxm{l}{m}\nxil{l}\mr{d}\Ghat+\mathcal{O}\left(h\right)\right)\Fxm{m}(i)\left(\Qc\right),\\
&i=1,\dots,5.
\end{split}
\end{equation}
\end{thrm}
\begin{proof}
The proof is given in Appendix~\ref{app:conservationsecond}.
\end{proof}
\section{Element-wise conservation}\label{app:conservationsecond}
In the subsections that follow, the various requirements for element-wise conservation 
given in Section~\ref{sec:gen_element_wise_conservation} are proven under the assumptions in 
Thrm.~\ref{thrm:genelement}.
\subsection{Consistency}
It is assumed that the coupling terms satisfy the consistency conditions and therefore what remains is to 
prove that the generalized conservative quantity satisfies the consistency condition. However, this is 
immediate since $\barM^{\kappa}$ is at least a degree $2p-1$ approximation to the $L^{2}$ inner product, that is
for scalar functions $\fnc{V}$ and $\fnc{U}$
\begin{equation*}
\vk{\kappa}\Tr\barM^{\kappa}\uk{\kappa}=\int_{\Ohatk}\fnc{V}\fnc{U}\mr{d}\Ohat+\mathcal{O}\left(h^{2p}\right).
\end{equation*}
For a constant state $\Qc$, $\qki{\kappa}{[i]} = \barones{\kappa}\Qc(i)$ thus,
\begin{equation*}
\qbark(i) = \barones{\kappa}\Tr\barM^{\kappa}\barmatJk{\kappa}\barones{\kappa}\Qc(i) =
\left(\int_{\Ohatk}\Jk\mr{d}\Ohat+\mathcal{O}\left(h^{2p}\right)\right)\Qc(i) .
\end{equation*}
\subsection{Telescoping flux form}
To obtain the telescoping flux form, the semi-discrete form of each of the 
scalar conservation laws is discretely integrated over each element. This is
accomplished by multiplying the scalar semi-discrete forms obtained from~\eqref{eq:DEulerCCSsecond} 
by $\barones{\kappa}\Tr\barM^{\kappa}$, which gives
\begin{equation}\label{eq:conservationproofDEulerCCsecond}
\begin{split}
&\barones{\kappa}\Tr\barM^{\kappa}\barmatJk{\kappa}\frac{\mr{d}\qki{\kappa}{[i]}}{\mr{d}t}
+\barones{\kappa}\Tr\sum\limits_{l,m=1}^{3}\left(\barQxil{l}^{\kappa}\barmatAlmk{l}{m}{\kappa}+
\barmatAlmk{l}{m}{\kappa}\barQxil{l}^{\kappa}\right)\circ\matFxmi{m}{\qk{\kappa}}{\qk{\kappa}}{[i]}\barones{\kappa} =\\
&\barones{\kappa}\Tr\sum\limits_{l,m=1}^{3}\barExil{l}^{\kappa}\barmatAlmk{l}{m}{\kappa}\circ
\matFxmi{m}{\qk{\kappa}}{\qk{\kappa}}{[i]}\barones{\kappa}-\barones{\kappa}\Tr\bm{CT},
\end{split}
\end{equation}
where $\bm{CT}$ are the coupling terms. By the symmetry of $\matFxmi{m}{\qk{\kappa}}{\qk{\kappa}}{[i]}$, 
$\barExil{l}^{\kappa}$, and $\barmatAlmk{l}{m}{\kappa}$, \eqref{eq:conservationproofDEulerCCsecond} reduces to 
\begin{equation*}
\begin{split}
&\barones{\kappa}\Tr\barM^{\kappa}\barmatJk{\kappa}\frac{\mr{d}\qki{\kappa}{[i]}}{\mr{d}t}
+\barones{\kappa}\Tr\sum\limits_{l,m=1}^{3}\left(\barSxil{l}^{\kappa}\barmatAlmk{l}{m}{\kappa}+
\barmatAlmk{l}{m}{\kappa}\barSxil{l}^{\kappa}\right)\circ\matFxmi{m}{\qk{\kappa}}{\qk{\kappa}}{[i]}\barones{\kappa} =\\
&-\barones{\kappa}\Tr\bm{CT},
\end{split}
\end{equation*}

The matrix $\left(\left(\barSxil{l}^{\kappa}\barmatAlmk{l}{m}{\kappa}+
\barmatAlmk{l}{m}{\kappa}\barSxil{l}^{\kappa}\right)\circ\matFxmi{m}{\qk{\kappa}}{\qk{\kappa}}{[i]}\right)$ is 
skew-symmetric. Rearrangement and explicitly writing out the coupling terms, yields
\begin{equation}\label{eq:FVfirst}
\begin{split}
&\barones{\kappa}\Tr\barM^{\kappa}\barmatJk{\kappa}\frac{\mr{d}\qki{\kappa}{[i]}}{\mr{d}t}+
\barones{\kappa}\Tr\sum\limits_{f=1}^{6}\sum\limits_{m=1}^{3}\barEfm{f}{m}{\kappa}\circ\matFxmi{m}{\qk{\kappa}}{\qk{f}}{[i]}\barones{f}.
=0,
\end{split}
\end{equation}
which is in the form of~\eqref{eq:gencon} with 
\begin{equation*}
\begin{split}
&\qbark(i) \equiv \barones{\kappa}\Tr\barM^{\kappa}\barmatJk{\kappa}\qki{\kappa}{[i]},\\
&\gfk{f}{\kappa}\equiv\barones{\kappa}\Tr\sum\limits_{m=1}^{3}\barEfm{f}{m}{\kappa}\circ\matFxmi{m}{\qk{\kappa}}{\qk{f}}{[i]}\barones{f},\\
&i=1,\dots,5.
\end{split}
\end{equation*}
What remains to be shown is that the flux at the element 
boundaries is unique, or equivalently that the contributions from two abutting elements; this can be 
readily demonstrated by considering the coupling terms on the simple two element example 
and using the SBP preserving property of the interpolation operators. 
\subsection{Boundedness}
The boundedness estimate on the generalized conserved quantity can be shown as follows:
\begin{equation*}
\begin{split}
\left|\qbark(i)-\vbark(i)\right|=&\left|\barones{\kappa}\Tr\barM^{\kappa}\barmatJk{\kappa}
\left(\qki{\kappa}{[i]}-\vki{\kappa}{[i]}\right)\right|\\
&=\left|\sum\limits_{j=1}^{\Nl{\kappa}}\barM^{\kappa}(j,j)\barmatJk{\kappa}(j,j)
\left(\qki{\kappa}{[i]}(j)-\vki{\kappa}{[i]}(j)\right)\right|\\
&\leq h^{d}\sum\limits_{j=1}^{\Nl{\kappa}}\barM^{\kappa}(j,j)\left|\qki{\kappa}{[i]}(j)-\vki{\kappa}{[i]}(j)\right|\\
&\leq Ch^{d}\max\left(\left|\qki{\kappa}{[i]}(j)-\vki{\kappa}{[i]}(j)\right|\right)\\
&=Ch^{d}\|\qki{\kappa}{[i]}-\vki{\kappa}{[i]}\|_{L^{\infty}}
\leq Ch^{d}\|\qh-\vh\|_{L^{\infty}(\Ballk)},
\end{split}
\end{equation*}
where the scaling comes from the fact that $\Jk\propto h^{d}$.

The generalized flux is constructed from linear combinations of the two-point flux function, 
for which the Ishmael-Roe flux has been shown to be continuously differentiable with respect to its arguments
(see Crean \etal~\cite{Crean2018}) and therefore the generalized flux is bounded in the $L^{\infty}$ norm, where 
the scaling in the inequalities comes from the fact that $\Jdxildxm{l}{m}\propto h^{d-1}$.
\subsection{Global conservation}
The matrix norm is an $L^{2}$ discrete inner product and naturally leads to global conservation;  thus,
\begin{equation*}
\begin{split}
\sum\limits_{\kappa=1}^{k}\qbark(i) =& \sum\limits_{\kappa=1}^{K}\barones{\kappa}\Tr\barM^{\kappa}
\barmatJk{\kappa}\qki{\kappa}{[i]}\\
=&\sum\limits_{\kappa=1}^{K}\int_{\Ohatk}\bfnc{Q}(i)\Jk\mr{d}\Ohat+\mathcal{O}\left(h^{2p}\right)\\
=&\int_{\Omega}\bfnc{Q}(i)\mr{d}\Omega+\mathcal{O}\left(h^{2p}\right).
\end{split}
\end{equation*}

\section{Proof of Thrm.~\ref{thrm:coup}}\label{app:coup}
For simplicity, the proof given here is in terms of the Thomas and Lombard~\cite{Thomas1979} approximate 
metrics; the proof for the symmetric metrics of Vinokur and Yee~\cite{Vinokur2002a}, follows identically.

The Thomas Lombard approximate metrics as well as what they approximate are given below.
\begin{equation}
\begin{split}
\barmatAlmk{1}{1}{\kappa}&=
\barDxil{3}\diag(\bm{x}_{3})\barDxil{2}\bm{x}_{2}
 -\barDxil{2}\diag(\bm{x}_{3})\barDxil{3}\bm{x}_{2}\\
 \approx&\left(\frac{\partial\xm{3}}{\partial\xil{3}}\frac{\partial\xm{2}}{\partial\xi{2}}
       -\frac{\partial\xm{3}}{\partial\xil{2}}\frac{\partial\xm{2}}{\partial\xil{3}}\right)(\bm{\xi})
       =\left(\frac{\partial}{\partial\xil{3}}\left(\xm{3}\frac{\partial\xm{2}}{\partial\xil{2}}\right)
       -\frac{\partial}{\partial\xil{2}}\left(\xm{3}\frac{\partial\xm{2}}{\partial\xil{3}}\right)\right)(\bm{\xi})
,\\
\barmatAlmk{1}{2}{\kappa}
&=\barDxil{3}\diag(\bm{x}_{1})\barDxil{2}\bm{x}_{3}
                                                          -\barDxil{2}\diag(\bm{x}_{1})\barDxil{3}\bm{x}_{3}\\
 \approx&\left(\frac{\partial\xm{1}}{\partial\xil{3}}\frac{\partial\xm{3}}{\partial\xil{2}}
       -\frac{\partial\xm{1}}{\partial\xil{2}}\frac{\partial\xm{3}}{\partial\xil{3}}\right)_{\Ck}
       =\left(\frac{\partial}{\partial\xil{3}}\left(\xm{1}\frac{\partial\xm{3}}{\partial\xil{2}}\right)
       -\frac{\partial}{\partial\xil{2}}\left(\xm{1}\frac{\partial\xm{3}}{\partial\xil{3}}\right)\right)(\bm{\xi})
,\\
\barmatAlmk{1}{3}{\kappa}
&=
\barDxil{3}\diag(\bm{x}_{2})\barDxil{2}\bm{x}_{1}
                                                            -\barDxil{2}\diag(\bm{x}_{2})\barDxil{3}\bm{x}_{1}\\
 \approx&\left(\frac{\partial\xm{2}}{\partial\xil{3}}\frac{\partial\xm{1}}{\partial\xil{2}}
       -\frac{\partial\xm{2}}{\partial\xil{2}}\frac{\partial\xm{1}}{\partial\xil{3}}\right)(\bm{\xi})
       =\left(\frac{\partial}{\partial\xil{3}}\left(\xm{2}\frac{\partial\xm{1}}{\partial\xil{2}}\right)
       -\frac{\partial}{\partial\xil{2}}\left(\xm{2}\frac{\partial\xm{1}}{\partial\xil{3}}\right)\right)(\bm{\xi})
,\\
\barmatAlmk{2}{1}{\kappa}
&=
\barDxil{1}\diag(\bm{x}_{3})\barDxil{3}\bm{x}_{2}
                                                         -\barDxil{3}\diag(\bm{x}_{3})\barDxil{1}\bm{x}_{2}\\
 \approx&\left(\frac{\partial\xm{3}}{\partial\xil{1}}\frac{\partial\xm{2}}{\partial\xil{3}}
       -\frac{\partial\xm{3}}{\partial\xil{3}}\frac{\partial\xm{2}}{\partial\xil{1}}\right)(\bm{\xi})
       =\left(\frac{\partial}{\partial\xil{1}}\left(\xm{3}\frac{\partial\xm{2}}{\partial\xil{3}}\right)
       -\frac{\partial}{\partial\xil{3}}\left(\xm{3}\frac{\partial\xm{2}}{\partial\xil{1}}\right)\right)(\bm{\xi})
,\\
\barmatAlmk{2}{2}{\kappa}
&=\barDxil{1}\diag(\bm{x}_{1})\barDxil{3}\bm{x}_{3}
                                                         -\barDxil{3}\diag(\bm{x}_{1})\barDxil{1}\bm{x}_{3}\\
 \approx&\left(\frac{\partial\xm{1}}{\partial\xil{1}}\frac{\partial\xm{3}}{\partial\xil{3}}
       -\frac{\partial\xm{1}}{\partial\xil{3}}\frac{\partial\xm{3}}{\partial\xil{1}}\right)(\bm{\xi})
       =\left(\frac{\partial}{\partial\xil{1}}\left(\xm{1}\frac{\partial\xm{3}}{\partial\xil{3}}\right)
       -\frac{\partial}{\partial\xil{3}}\left(\xm{1}\frac{\partial\xm{3}}{\partial\xil{1}}\right)\right)(\bm{\xi})
,\\
\barmatAlmk{2}{3}{\kappa}&=
\barDxil{1}\diag(\bm{x}_{2})\barDxil{3}\bm{x}_{1}
                                                         -\barDxil{3}\diag(\bm{x}_{2})\barDxil{1}\bm{x}_{1}\\
 \approx&\left(\frac{\partial\xm{2}}{\partial\xil{1}}\frac{\partial\xm{1}}{\partial\xil{3}}
       -\frac{\partial\xm{2}}{\partial\xil{3}}\frac{\partial\xm{1}}{\partial\xil{1}}\right)(\bm{\xi})
       =\left(\frac{\partial}{\partial\xil{1}}\left(\xm{2}\frac{\partial\xm{1}}{\partial\xil{3}}\right)
       -\frac{\partial}{\partial\xil{3}}\left(\xm{2}\frac{\partial\xm{1}}{\partial\xil{1}}\right)\right)(\bm{\xi})
,\\
\barmatAlmk{3}{1}{\kappa}
&=\barDxil{2}\diag(\bm{x}_{3})\barDxil{1}\bm{x}_{2}
                                                         -\barDxil{1}\diag(\bm{x}_{3})\barDxil{2}\bm{x}_{2}\\
 \approx&\left(\frac{\partial\xm{3}}{\partial\xil{2}}\frac{\partial\xm{2}}{\partial\xil{1}}
       -\frac{\partial\xm{3}}{\partial\xil{1}}\frac{\partial\xm{2}}{\partial\xm{2}}\right)(\bm{\xi})
       =\left(\frac{\partial}{\partial\xil{2}}\left(\xm{3}\frac{\partial\xm{2}}{\partial\xil{1}}\right)
       -\frac{\partial}{\partial\xil{1}}\left(\xm{3}\frac{\partial\xm{2}}{\partial\xil{2}}\right)\right)(\bm{\xi})
,\\
\barmatAlmk{3}{2}{\kappa}
&=\Dxil{2}\diag(\bm{x}_{1})\Dxil{1}\bm{x}_{2}
                                                         -\Dxil{1}\diag(\bm{x}_{1})\Dxil{2}\bm{x}_{2}\\
 \approx&\left(\frac{\partial\xm{1}}{\partial\xil{2}}\frac{\partial\xm{2}}{\partial\xil{1}}
       -\frac{\partial\xm{1}}{\partial\xil{1}}\frac{\partial\xm{2}}{\partial\xm{2}}\right)(\bm{\xi})
       =\left(\frac{\partial}{\partial\xil{2}}\left(\xm{1}\frac{\partial\xm{2}}{\partial\xil{1}}\right)
       -\frac{\partial}{\partial\xil{1}}\left(\xm{1}\frac{\partial\xm{2}}{\partial\xil{2}}\right)\right)(\bm{\xi})
,\\
\barmatAlmk{3}{3}{\kappa}
&=\Dxil{2}\diag(\bm{x}_{2})\Dxil{1}\bm{x}_{1}
                                                         -\Dxil{1}\diag(\bm{x}_{2})\Dxil{2}\bm{x}_{1}\\
 \approx&\left(\frac{\partial\xm{2}}{\partial\xil{2}}\frac{\partial\xm{1}}{\partial\xil{1}}
       -\frac{\partial\xm{2}}{\partial\xil{1}}\frac{\partial\xm{1}}{\partial\xm{2}}\right)(\bm{\xi})
       =\left(\frac{\partial}{\partial\xil{2}}\left(\xm{2}\frac{\partial\xm{1}}{\partial\xil{1}}\right)
       -\frac{\partial}{\partial\xil{1}}\left(\xm{2}\frac{\partial\xm{1}}{\partial\xil{2}}\right)\right)(\bm{\xi}).
\end{split}
\end{equation}
In order for the condition $\ones{\kappa}\Tr\bm{c}_{m}^{\kappa}=0$ to be met, the coupling 
terms on the pseudo mortar need to match the analytical terms in the integration of the 
GCL conditions for example 
\begin{equation*}
  \barones{\kappa}\Tr
  \barEfm{f}{\kappa}{f}{m}\barones{f}=\oint_{\Ghat^{f}}\Jdxildxm{f}{m}\nxil{f}\mr{d}\Ghat.
\end{equation*}
Consider the $\kappa^{\rm{th}}$ element  as having at least one nonconforming interface but a 
conforming interface on the face perpendicular to where $\xil{1}$ is maximum, abutting a fully conforming 
element (this face will be denoted face $2$). The contribution of the coupling elements to the discrete GCL for $m=1,2,3$, listed in that order, are
\begin{equation*}
\begin{split}
&\barones{\kappa}\Tr\barEfm{2}{\kappa}{1}{1}=
\barones{\kappa}\Tr\barRbetal{1}\Tr\barPorthol{1}\barRalphal{1}
\overline{\left[{\color{red}\fnc{J}\frac{\partial\xil{1}}{\partial\xm{1}}}\right]},\qquad
\barones{\kappa}\Tr\barEfm{2}{\kappa}{1}{2}=
\barones{\kappa}\Tr\barRbetal{1}\Tr\barPorthol{1}\barRalphal{1}
\overline{\left[{\color{red}\fnc{J}\frac{\partial\xil{1}}{\partial\xm{2}}}\right]},\\
&\barones{\kappa}\Tr\barEfm{2}{\kappa}{1}{3}=
\barones{\kappa}\Tr\barRbetal{1}\Tr\barPorthol{1}\barRalphal{1}
\overline{\left[{\color{red}\fnc{J}\frac{\partial\xil{1}}{\partial\xm{3}}}\right]},
\end{split}
\end{equation*}
where the metric terms, for example $\overline{\left[{\color{red}\fnc{J}\frac{\partial\xil{1}}{\partial\xm{1}}}\right]}$, 
are those computed in the fully conforming element using the Thomas and Lombard~\cite{Thomas1979} approach. 
What needs to be shown is that each term is exact, \ie,
\begin{equation*}
\begin{split}
 \barones{\kappa}\Tr\barEfm{2}{\kappa}{1}{1}&=\oint_{\Ghat_{2}}\fnc{J}\frac{\partial\xil{1}}{\partial\xm{1}}\mr{d}\Ghat
=\oint_{\Ghat_{2}}\left(\frac{\partial\xm{3}}{\partial\xil{3}}\frac{\partial\xm{2}}{\partial\xil{2}}
       -\frac{\partial\xm{3}}{\partial\xil{2}}\frac{\partial\xm{2}}{\partial\xil{3}}\right)\mr{d}\Ghat
=\oint_{\Gamma_{1}}\left(\frac{\partial}{\partial\xil{3}}\left(\xm{3}\frac{\partial\xm{2}}{\partial\xil{2}}\right)
       -\frac{\partial}{\partial\xil{2}}\left(\xm{3}\frac{\partial\xm{2}}{\partial\xil{3}}\right)\right)\mr{d}\Ghat\\
&=\int_{\xil{2}=-1}^{1}\left(\left.\xm{3}\frac{\partial\xm{2}}{\partial\xil{2}}\right|_{\xil{3}=-1}^{1}\right)\mr{d}\xil{2}-
  \int_{\xil{3}=-1}^{1}\left(\left.\xm{3}\frac{\partial\xm{2}}{\partial\xil{3}}\right|_{\xil{2}=-1}^{1}\right)\mr{d}\xil{3}
,\\
 \barones{\kappa}\Tr\barEfm{2}{\kappa}{1}{2}&=
\oint_{\Ghat_{2}}\fnc{J}\frac{\partial\xil{1}}{\partial\xm{2}}\mr{d}\Ghat=\oint_{\Ghat_{2}}\left(\frac{\partial}{\partial\xil{3}}\left(\xm{1}\frac{\partial\xm{3}}{\partial\xil{2}}\right)
       -\frac{\partial}{\partial\xil{2}}\left(\xm{1}\frac{\partial\xm{3}}{\partial\xil{3}}\right)\right)\mr{d}\Ghat\\
&=\int_{\xil{2}=-1}^{1}\left(\left.\xm{1}\frac{\partial\xm{3}}{\partial\xil{2}}\right|_{\xil{3}=-1}^{1}\right)\mr{d}\xil{2}-
  \int_{\xil{3}=-1}^{1}\left(\left.\xm{1}\frac{\partial\xm{3}}{\partial\xil{3}}\right|_{\xil{2}=-1}^{1}\right)\mr{d}\xil{3}
,\\
 \barones{\kappa}\Tr\barEfm{2}{\kappa}{1}{3}&=
\oint_{\Ghat_{2}}\fnc{J}\frac{\partial\xil{1}}{\partial\xm{3}}\mr{d}\Ghat=\oint_{\Ghat_{2}}\left(\frac{\partial}{\partial\xil{3}}\left(\xm{2}\frac{\partial\xm{1}}{\partial\xil{2}}\right)
       -\frac{\partial}{\partial\xil{2}}\left(\xm{2}\frac{\partial\xm{1}}{\partial\xil{3}}\right)\right)\mr{d}\Ghat\\
&=\int_{\xil{2}=-1}^{1}\left(\left.\xm{2}\frac{\partial\xm{1}}{\partial\xil{2}}\right|_{\xil{3}=-1}^{1}\right)\mr{d}\xil{2}-
  \int_{\xil{3}=-1}^{1}\left(\left.\xm{2}\frac{\partial\xm{1}}{\partial\xil{3}}\right|_{\xil{2}=-1}^{1}\right)\mr{d}\xil{3}.
\end{split}
\end{equation*}
Inserting the Thomas Lombard approximation in the first coupling term
\begin{equation*}
\begin{split}
&\barones{\kappa}\Tr\barEfm{2}{\kappa}{1}{1}=
\barones{\kappa}\Tr\barRbetal{1}\Tr\barPorthol{1}\barRalphal{1}\left(\barDxil{3}
\diag(\bm{x}_{3})\barDxil{2}\bm{x}_{2}
-\barDxil{2}\diag(\bm{x}_{3})\barDxil{3}\bm{x}_{2}\right)\\
&=\barones{\kappa}\Tr\left\{\left(\eNl{1}\eonel{1}\Tr\otimes\PxiloneD{}\otimes\QxiloneD{}\right)\diag(\bm{x}_{3})\barDxil{2}\bm{x}_{2}
-\left(\eNl{1}\eonel{1}\Tr\otimes\QxiloneD{}\otimes\PxiloneD{}\right)\diag(\bm{x}_{3})\barDxil{3}\bm{x}_{2}
\right\}\\
&=\left\{\bm{1}\Tr\left(\eNl{1}\eonel{1}\Tr\otimes\bm{1}\Tr\PxiloneD{}\otimes\bm{1}\Tr\QxiloneD{}\right)\diag(\bm{x}_{3})\barDxil{2}\bm{x}_{2}
-\left(\bm{1}\Tr\eNl{1}\eonel{1}\Tr\otimes\bm{1}\Tr\QxiloneD{}\otimes\bm{1}\Tr\PxiloneD{}\right)\diag(\bm{x}_{3})\barDxil{3}\bm{x}_{2}
\right\}\\
&=\left\{\bm{1}\Tr\left(\eNl{1}\eonel{1}\Tr\otimes\bm{1}\Tr\PxiloneD{}\otimes\bm{1}\Tr\ExiloneD{}\right)\diag(\bm{x}_{3})\barDxil{2}\bm{x}_{2}
-\left(\bm{1}\Tr\eNl{1}\eonel{1}\Tr\otimes\bm{1}\Tr\ExiloneD{}\otimes\bm{1}\Tr\PxiloneD{}\right)\diag(\bm{x}_{3})\barDxil{3}\bm{x}_{2}
\right\}
\end{split}
\end{equation*}
From the last equality, it can be concluded that 
\begin{equation*}
 \barones{\kappa}\Tr\barEfm{2}{\kappa}{1}{1}=\oint_{\Gamma_{1}}\left(\frac{\partial}{\partial\xil{3}}\left(\xm{3}\frac{\partial\xm{2}}{\partial\xil{2}}\right)
       -\frac{\partial}{\partial\xil{2}}\left(\xm{3}\frac{\partial\xm{2}}{\partial\xil{3}}\right)\right)\mr{d}\Ghat,
\end{equation*}
because the projection operators pick off the functions at the boundary and the $\mat{E}$ matrices are at least degree $2p-1$, which is the degree 
of the terms that they are integrating if the curvilinear coordinate transformation is constructed from degree $p$ tensor products. The remaining coupling terms are shown to 
exactly equal the required surface/line integrals.
}{}

\end{document}